\theoremstyle{change}%
\newtheorem{definition}{Definition:}[section]%
\newtheorem{theorem}[definition]{Theorem:}%
\newtheorem{proposition}[definition]{Proposition:}%
\newtheorem{lemma}[definition]{Lemma:}%
\newtheorem{corollary}[definition]{Corollary:}%
\newtheorem{conjecture}[definition]{Conjecture:}%
{\theorembodyfont{\rmfamily} \newtheorem{remark}[definition]{Remark:}}%
{\theorembodyfont{\rmfamily} }%
\newenvironment{proof}
  {{\bf Proof:}}
  {\qquad \hspace*{\fill} $\Box$}%
\newcommand{\AC}{\mathcal{A}}%
\newcommand{\BC}{\mathcal{B}}%
\newcommand{\FC}{\mathcal{F}}%
\newcommand{\KC}{\mathcal{K}}%
\newcommand{\MC}{\mathcal{M}}%
\newcommand{\PC}{\mathcal{P}}%
\newcommand{\SC}{\mathcal{S}}%
\newcommand{\UC}{\mathcal{U}}%
\newcommand{\VC}{\mathcal{V}}%
\newcommand{\WC}{\mathcal{W}}%
\newcommand{\tm}{\times}%
\newcommand{\K}{\mathbb{K}}%
\newcommand{\N}{\mathbb{N}}%
\renewcommand{\P}{\mathbb{P}}%
\newcommand{\R}{\mathbb{R}}%
\newcommand{\Z}{\mathbb{Z}}%
\newcommand{\rmd}{\mathrm{d}}%
\newcommand{\rme}{\mathrm{e}}%
\newcommand{\rmD}{\mathrm{D}}%
\newcommand{\SL}{\mathrm{SL}}%
\newcommand{\tr}{\mathrm{tr}}%
\newcommand{\rk}{\mathrm{rk}}%
\newcommand{\inv}{\mathrm{inv}}%
\newcommand{\diam}{\mathrm{diam}}%
\newcommand{\vol}{\mathrm{vol}}%
\newcommand{\inner}{\mathrm{int}}%
\newcommand{\cl}{\mathrm{cl}}%
\newcommand{\dist}{\mathrm{dist}}%
\newcommand{\tp}{\mathrm{top}}%
\newcommand{\ep}{\varepsilon}%
\newcommand{\unit}{\mathds{1}}%
\newcommand{\supp}{\mathrm{supp}}%
\newcommand{\Hom}{\mathrm{Hom}}%
\newcommand{\End}{\mathrm{End}}%
\newcommand{\Lip}{\mathrm{Lip}}%
\newcommand{\graph}{\mathrm{graph}}%
\newcommand{\length}{\mathrm{length}}%
\newcommand{\pr}{\mathrm{pr}}%
\begin{document}

\title{Invariance entropy for a class of partially hyperbolic sets}
\author{Christoph Kawan\thanks{Corresponding author; Faculty of Computer Science and Mathematics, University of Passau, 94032 Passau, Germany; phone: +49(0)851 5093363; e-mail: christoph.kawan@uni-passau.de} \and Adriano Da Silva\thanks{Imecc - Unicamp, Departamento de Matem\'atica, Rua S\'ergio Buarque de Holanda, 651, Cidade Universit\'aria Zeferino Vaz 13083-859, Campinas - SP, Brasil; e-mail: ajsilva@ime.unicamp.br}
}
\date{}
\maketitle%

\begin{abstract}
Invariance entropy is a measure for the smallest data rate in a noiseless digital channel above which a controller that only receives state information through this channel is able to render a given subset of the state space invariant. In this paper, we derive a lower bound on the invariance entropy for a class of partially hyperbolic sets. More precisely, we assume that $Q$ is a compact controlled invariant set of a control-affine system whose extended tangent bundle decomposes into two invariant subbundles $E^+$ and $E^{0-}$ with uniform expansion on $E^+$ and weak contraction on $E^{0-}$. Under the additional assumptions that $Q$ is isolated and that the $u$-fibers of $Q$ vary lower semicontinuously with the control $u$, we derive a lower bound on the invariance entropy of $Q$ in terms of relative topological pressure with respect to the unstable determinant. Under the assumption that this bound is tight, our result provides a first quantitative explanation for the fact that the invariance entropy does not only depend on the dynamical complexity on the set of interest.%
\end{abstract}

{\small\bf Keywords:} {\small Networked control; invariance entropy; control-affine system; partial hyperbolicity}%

{\small\bf AMS Classification:} 93C10; 93C15; 93C41; 37A35; 37D30%

		
\section{Introduction}%

This paper is part of a major project with the aim to establish a hyperbolic theory of nonlinear control systems, to some extent analogous to the hyperbolic theory of dynamical systems originating from works of Poinc\'are, Hadamard, Smale, Anosov and many others. Insights of this theory are used in control at least since the work of Ott, Grebogi and Yorke \cite{OGY} on the subject known as `control of chaos', which uses properties of chaotic attractors to achieve stabilization with low energy use. Different from this approach, our work is grounded on the topological theory of Colonius-Kliemann \cite{CKl}, which aims at the understanding of the global controllability structure of a system using a dynamical systems view. More precisely, under appropriate assumptions, the transition map of the control system is extended to a skew-product flow (called \emph{control flow}), where it becomes a cocycle over a shift flow defined on the space of all admissible control functions. This construction is similar to the random dynamical systems view of stochastic ODEs. With the skew-product flow at hand, one can associate maximal sets of controllability with maximal topologically transitive sets of the skew-product, and give control-theoretic interpretations to other dynamical concepts such as chain transitive sets and supports of invariant measures.%

The paper \cite{CDu} was the first to examine uniformly hyperbolic structures on subsets of complete controllability (known as \emph{control sets}), using shadowing techniques to prove, e.g., that such sets behave robustly under small perturbations. In \cite{DK2}, it was shown that hyperbolic control sets exist for a large class of invariant control systems on flag manifolds of semisimple Lie groups, using extensively the semigroup theory developed by San Martin and coworkers \cite{ADS,SM1,SM2,SM3,SM4}. Moreover, it was shown that each maximal set of chain controllability (called a \emph{chain control set}) of such a system admits a continuous decomposition of its extended tangent bundle into three invariant subbundles $E^-$, $E^0$ and $E^+$ with uniform contraction on $E^-$ and uniform expansion on $E^+$. Further results on uniformly hyperbolic sets were obtained in \cite{DK1,DK3,Ka2}, and a survey of such results was provided in \cite{Ka3}.%

An important application of the hyperbolic theory of control systems is related to problems arising in networked control. A networked control system is a spatially distributed system whose components (sensors, controllers and actuators) can only communicate over a shared digital communication network. Examples can be found, e.g., in automated traffic systems, underwater communications for remotely controlled surveillance, rescue submarines and modern industrial systems which combine industrial production with information and communication technology. A fundamental problem in this context is to determine the minimal requirements on the communication network so that a given control objective can be achieved. In the simplest setup, this reduces to the question about the smallest channel capacity, above which a system can be stabilized. For linear control systems, it was shown by many authors under various different assumptions on the components of the system, that the infimal channel capacity equals the logarithm of the open-loop unstable determinant, which is known as the \emph{data-rate theorem} (see \cite{Ne2} for a survey). The fact that this value coincides with the (topological or measure-theoretic) entropy of the uncontrolled system led several researchers to the conclusion that methods from ergodic theory could be useful to study more advanced setups involving nonlinear dynamics, more complicated network topologies and also different control objectives, cf.~\cite{Col,Co2,DVa,Kaw,LHe,MPo,Nea,Sav,Yuk,YBa}.%

The paper at hand provides a contribution both to the theory of networked control and to the hyperbolic theory of control systems in that it extends a result obtained for the invariance entropy of uniformly hyperbolic control sets to a class of partially hyperbolic controlled invariant sets. Invariance entropy is a quantity that measures the smallest channel capacity of a noiseless digital channel above which a compact controlled invariant set can be made invariant by a controller that receives state information through this channel. This quantity, which is defined in a similar fashion as topological entropy of dynamical systems, was first introduced in \cite{Nea} for discrete-time systems under the name \emph{topological feedback entropy} (using an open-cover definition), and extended to continuous time in \cite{CKa} (using a spanning-sets definitions). As the results exposed in \cite{CKa,DK1,Kaw} show, invariance entropy is closely connected to dynamical quantities such as Lyapunov exponents and escape rates. In particular, in \cite{DK1} we provided a formula for the invariance entropy $h_{\inv}(Q)$ of a uniformly hyperbolic chain control set $Q$ without center bundle, namely%
\begin{equation}\label{eq_uhccs_formula}
  h_{\inv}(Q) = \inf_{(u,x) \in L(Q)}\limsup_{\tau\rightarrow\infty}\frac{1}{\tau}\log J^+\varphi_{\tau,u}(x)%
\end{equation}
with $J^+\varphi_{\tau,u}(x)$ denoting the unstable determinant of the spatial derivative of the transition map $\varphi(\tau,x,u) = \varphi_{\tau,u}(x)$ and%
\begin{equation*}
  L(Q) = \{ (u,x) \in \UC \tm M\ :\ \varphi(\R,x,u) \subset Q \},%
\end{equation*}
where $\UC$ is the set of admissible control functions. To prove the inequality `$\geq$' in \eqref{eq_uhccs_formula}, we only used ideas from dynamical systems related to the computation of escape rates, cf.~\cite{You}.%

In this paper, we extend the lower estimate in \eqref{eq_uhccs_formula} to a class of partially hyperbolic controlled invariant sets $Q$ with trivial dynamics (i.e., vanishing Lyapunov exponents) on the center bundle. Under the additional assumptions that $Q$ is isolated and that the $u$-fibers $Q(u) := \{ x\in Q\ :\ \varphi(\R,x,u) \subset Q\}$ depend lower semicontinuously on the control $u$ (with the appropriate topology on $\UC$), our main result shows that%
\begin{equation}\label{eq_ie_new_lb}
  h_{\inv}(Q) \geq -\sup_P P_{\tp}(\varphi_{|L(Q)};-\log J^+\varphi_{1,u}(x)).%
\end{equation}
Here the supremum is taken over all shift-invariant Borel probability measures $P$ on the base space $\UC$ of admissible control functions and $P_{\tp}(\cdot;\cdot)$ denotes the topological pressure of the bundle random dynamical system (briefly, a \emph{bundle RDS}) that is obtained from the control flow restricted to $L(Q)$, when $\UC$ is equipped with $P$. Observing that in the case of uniform hyperbolicity without center bundle, the topological pressure above reduces to%
\begin{equation*}
  P_{\tp}(\varphi_{|L(Q)};-\log J^+\varphi_{1,u}(x)) = -\inf_{\mu:\ (\pr_{\UC})_*\mu = P\atop \supp \mu \subset L(Q)} \int_{\UC} \log J^+\varphi_{1,u}(x) \rmd\mu(u,x),%
\end{equation*}
the infimum taken over all invariant probability measures of the control flow that project to $P$ and are supported on $L(Q)$, we are able to recover the lower bound in \eqref{eq_uhccs_formula} as a special case. Here we use that for a uniformly hyperbolic set without center bundle, the $u$-fibers of $Q$ are finite (see \cite{Ka2} for a proof), implying that the measure-theoretic entropy of the bundle RDS $\varphi_{|L(Q)}$ vanishes.%

As in the uniformly hyperbolic case, examples to which our result can be applied, are provided by invariant systems on flag manifolds on semisimple Lie groups. In the paper at hand, we only discuss a special case, namely systems on projective space induced by bilinear systems on $\R^d$, while the more general case is studied in \cite{DK4}.%

The paper is organized as follows. In Section \ref{sec_framework}, we precisely formulate our main result, recalling all the concepts involved. The proof is carried out in Section \ref{sec_proof} through a series of lemmas and propositions. Section \ref{sec_example} provides a class of examples and Section \ref{sec_interpret} discusses the contents of the result and relates it to other problems, e.g., submanifold stabilization. Finally, some technical results of independent interest, used in the proof, are collected in Section \ref{sec_appendix} (Appendix).%

\paragraph{Notation.} We write $\N = \{1,2,3,\ldots\}$ for the set of positive integers, $\Z$ for the set of all integers and $\R$ for the set of real numbers. We also write $\K_+ := \{ x\in \K : x \geq 0\}$ for $\K\in\{\Z,\R\}$. For $x>0$, $\log x$ denotes the natural logarithm of $x$. If $V,W$ are vector spaces, we write $\Hom(V,W)$ and $\End(V)$ for the spaces of homomorphisms from $V$ to $W$ and endomorphisms on $V$, respectively.%

If $M$ is a smooth manifold, we write $T_xM$ for the tangent space to $M$ at $x\in M$. If $f:M\rightarrow N$ is differentiable, $\rmD f(x):T_xM \rightarrow T_{f(x)}N$ denotes the derivative at $x\in M$. If $(M,g)$ is a Riemannian manifold, we write $|\cdot|$ for the norm in each tangent space $T_xM$, $d(\cdot,\cdot)$ for the geodesic distance and $\vol(\cdot)$ for the Riemannian volume measure on $M$, respectively. Moreover, $\exp_x$ denotes the Riemannian exponential function.%

If $(X,d)$ is a metric space, we write $B_r(x)$ for the open ball of radius $r>0$ centered at $x\in X$. If $\emptyset \neq A \subset X$ and $x\in X$, we write $\dist(x,A) := \inf_{a\in A}d(x,a)$. We let $N_{\ep}(A)$ denote the closed $\ep$-neighborhood of $A$, i.e.,%
\begin{equation*}
  N_{\ep}(A) := \left\{ x\in X\ :\ \dist(x,A) \leq \ep \right\}.%
\end{equation*}
Furthermore, $\unit_A$ stands for the indicator function of $A$ and we write $\inner A$, $\cl A$ and $\partial A$ for the interior, the closure and the boundary of $A$, respectively. We further define the diameter of $A$ by $\diam A := \sup_{x,y\in A}d(x,y)$. By $d_H(A,B)$ we denote the Hausdorff distance between two nonempty compact sets $A,B \subset X$. Finally, $\BC(X)$ denotes the Borel-$\sigma$-algebra on $X$.%

If $\mu$ is a Borel measure on $X$, we let $\supp\mu \subset X$ denote its support, and we write $f_*\mu$ for the push-forward of $\mu$ under a measurable map $f:X\rightarrow X$.%

\section{Assumptions and statement of the main result}\label{sec_framework}

We consider a control-affine system%
\begin{equation*}
  \dot{x}(t) = f_0(x(t)) + \sum_{i=1}^m u_i(t)f_i(x(t)),\quad u \in \UC,%
\end{equation*}
on a connected Riemannian manifold $M$ of dimension $d$. We assume that the vector fields $f_0,f_1,\ldots,f_m$ are of class $C^2$ and%
\begin{equation*}
  \UC = \left\{ u:\R \rightarrow \R^m\ :\ u \mbox{ measurable with } u(t) \in U \mbox{ a.e.} \right\},%
\end{equation*}
where $U \subset \R^m$ is an infinite compact and convex set. We write $\varphi(t,x,u) = \varphi_{t,u}(x)$ for the trajectory through $x\in M$, corresponding to the control function $u\in\UC$. Furthermore,
we denote by $\theta:\R \tm \UC \rightarrow \UC$, $(t,u) \mapsto \theta_tu := u(\cdot + t)$, the shift flow on $\UC$. With the weak$^*$-topology of $L^{\infty}(\R,\R^m) = L^1(\R,\R^m)^*$, $\UC$ becomes a compact metrizable space and $\theta$ a continuous flow, cf.~\cite{CKl}. We fix a metric $d_{\UC}$ on $\UC$, compatible with this topology.%

Assuming w.l.o.g.~that all trajectories are defined on $\R$, the maps $\theta$ and $\varphi$ define the control flow%
\begin{equation*}
  \Phi:\R \tm (\UC \tm M) \rightarrow \UC \tm M,\quad \Phi_t(u,x) = (\theta_tu,\varphi(t,x,u)),%
\end{equation*}
which is a continuous skew-product flow. In particular, $\varphi$ satisfies the cocycle property%
\begin{equation*}
  \varphi(t+s,x,u) = \varphi(t,\varphi(s,x,u),\theta_s u),\quad \forall t,s\in\R,\ x\in M,\ u\in\UC.%
\end{equation*}

From the assumptions it follows that $\varphi$ is twice differentiable in $x$ and that the first and second derivatives depend continuously on $(u,x)$, see \cite[Thm.~1.1]{Kaw}.%

Recall that the \emph{invariance entropy} is defined as follows. Let $\emptyset \neq K \subset Q \subset M$, $K$ compact, such that for every $x\in K$ there is $u\in\UC$ with $\varphi(\R_+,x,u)\subset Q$. For $\tau>0$, a set $\SC \subset \UC$ is called $(\tau,K,Q)$-spanning if for every $x\in K$ there is $u\in\SC$ with $\varphi([0,\tau],x,u) \subset Q$. Writing $r_{\inv}(\tau,K,Q)$ for the smallest cardinality of such a set, the invariance entropy of $(K,Q)$ is defined by%
\begin{equation*}
  h_{\inv}(K,Q) := \limsup_{\tau\rightarrow\infty}\frac{1}{\tau}\log r_{\inv}(\tau,K,Q) \in [0,\infty].%
\end{equation*}

Throughout the paper, we consider a compact set $Q \subset M$ with the following properties:%
\begin{enumerate}
\item[(P1)] $Q$ is \emph{all-time controlled invariant}, i.e., for every $x\in Q$ there exists $u\in\UC$ with $\varphi(\R,x,u) \subset Q$. We write%
\begin{equation*}
  L(Q) := \left\{ (u,x) \in \UC \tm M\ :\ \varphi(\R,x,u) \subset Q \right\}%
\end{equation*}
for the \emph{lift} of $Q$ to $\UC \tm M$ and note that $L(Q)$ is a compact $\Phi$-invariant set.%
\item[(P2)] For every $(u,x) \in L(Q)$ there exists a decomposition%
\begin{equation*}
  T_xM = E^+(u,x) \oplus E^{0-}(u,x)%
\end{equation*}
into two linear subspaces of constant dimensions such that%
\begin{equation*}
  \rmD\varphi_{t,u}(x)E^i(u,x) = E^i(\Phi_t(u,x)) \mbox{\quad for all\ } t \in \R,\ (u,x) \in L(Q) \mbox{\ and\ } i \in \{+,0-\},%
\end{equation*}
and the following holds: there exists a constant $\lambda>0$ such that for every $\ep>0$ there is $T>0$ with the following property. For all $(u,x)\in L(Q)$ and $t\geq T$,%
\begin{align*}
  |\rmD\varphi_{t,u}(x)v| \geq \rme^{\lambda t}|v| &\mbox{\quad if\ } v \in E^+(u,x),\\
  |\rmD\varphi_{t,u}(x)v| \leq \rme^{\ep t}|v| &\mbox{\quad if\ } v \in E^{0-}(u,x).%
\end{align*}
\item[(P3)] The set-valued map, defined by%
\begin{equation*}
  u \mapsto Q(u) := \left\{ x \in M\ :\ \varphi(\R,x,u) \subset Q \right\},%
\end{equation*}
from the space $\UC$ of control functions into the space of compact subsets of $Q$ is lower semicontinuous. We call $Q(u)$ the \emph{$u$-fiber} of $Q$.%
\item[(P4)] The set $Q$ is isolated, i.e., there exists a neighborhood $N \subset M$ of $Q$ such that $\varphi(\R,x,u) \subset N$ implies $(u,x) \in L(Q)$ for any $(u,x) \in \UC \tm M$.%
\end{enumerate}

Observe that in (P3) we can assume w.l.o.g.~that $Q(u) \neq \emptyset$ for all $u\in\UC$. Otherwise we replace $\UC$ with $\UC^* := \{ u\in\UC : Q(u) \neq \emptyset \}$ in the whole proof, using that $\UC^*$ is closed and $\theta$-invariant. In Section \ref{sec_example} we will provide examples of sets $Q$ satisfying all of the above assumptions.%

Under these assumptions, we provide a lower bound on the invariance entropy $h_{\inv}(K,Q)$ for compact sets $K \subset Q$ of positive volume.%

To explain our result, we introduce the set $\MC_{\Phi_1}(L(Q))$ of all Borel probability measures on $L(Q)$, invariant under the time-$1$ map $\Phi_1:\UC \tm M \rightarrow \UC \tm M$ of the control flow. If $\mu \in \MC_{\Phi_1}(L(Q))$ and $P = (\pr_{\UC})_*\mu$ is the projection of $\mu$ to $\UC$, then $P$ is a Borel probability measure on $\UC$, invariant under $\theta_1$. Observe that when $\UC$ is endowed with $P$, $\varphi$ can be regarded as a two-sided $C^2$-random dynamical system over the base $(\UC,\BC(\UC),P,\theta_1)$ (cf.~\cite[Def.~1.1.3]{Arn}). In this case, the measure-theoretic entropy $h_{\mu}(\varphi)$ is defined by%
\begin{equation}\label{eq_rds_mesent}
  h_{\mu}(\varphi) := \sup_{\AC}\lim_{n\rightarrow\infty}\frac{1}{n}\int_{\UC} H_{\mu_u}\Bigl(\bigvee_{i=0}^{n-1}\varphi_{i,u}^{-1}\AC\Bigr) \rmd P(u),%
\end{equation}
where the supremum is taken over all finite measurable partitions of $M$ and $\{\mu_u\}_{u\in\UC}$ is the $P$-almost everywhere defined family of sample measures on $M$ so that $\rmd \mu(u,x) = \rmd \mu_u(x) \rmd P(u)$ (see also \cite[Sec.~1.4]{Arn} and \cite{Bog,ZCa}).%

For $(u,x)\in L(Q)$ and $t\in\R$, we write%
\begin{equation}\label{eq_unstabledet}
  J^+\varphi_{t,u}(x) := \bigl|\det\rmD\varphi_{t,u}(x)_{|E^+(u,x)}:E^+(u,x) \rightarrow E^+(\Phi_t(u,x))\bigr|%
\end{equation}
and note that the map $(t,u,x) \mapsto \log J^+\varphi_{t,u}(x)$ is a real-valued continuous additive cocycle over the restriction of $\Phi$ to $L(Q)$, i.e.,%
\begin{equation*}
  \log J^+\varphi_{t+s,u}(x) = \log J^+\varphi_{t,u}(x) + \log J^+\varphi_{s,\theta_tu}(\varphi(t,x,u)),\quad \forall t,s\in\R,\ (u,x)\in L(Q).%
\end{equation*}
The continuity of this cocycle follows from the fact that the exponential separation of the subbundles $E^+$ and $E^{0-}$ implies the continuity of $E^+(\cdot,\cdot)$ and $E^{0-}(\cdot,\cdot)$ (see, e.g., \cite[Lem.~6.4]{Kaw}).%

Now we can formulate our result:%

\begin{theorem}\label{thm_mainresult}
Under the assumptions on $Q$ formulated above, for every compact set $K\subset Q$ with $\vol(K)>0$ the invariance entropy satisfies%
\begin{equation}\label{eq_ie_lb}
  h_{\inv}(K,Q) \geq \inf_{\mu\in\MC_{\Phi_1}(L(Q))}\left(\int_{L(Q)} \log J^+\varphi_{1,u}(x) \rmd\mu(u,x) - h_{\mu}(\varphi)\right).%
\end{equation}
\end{theorem}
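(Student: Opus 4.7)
The plan is to follow a volume-counting scheme in the spirit of Young's escape-rate analysis \cite{You}, combined with random dynamical systems (RDS) entropy theory \cite{Bog,ZCa} and subadditive ergodic optimization \cite{Mor}. The basic setup: for any $(\tau,K,Q)$-spanning set $\SC_\tau\subset\UC$, the fibers $K_u^\tau:=\{x\in K:\varphi([0,\tau],x,u)\subset Q\}$, $u\in\SC_\tau$, cover $K$, so $r_{\inv}(\tau,K,Q)\geq \vol(K)/\sup_{u\in\SC_\tau}\vol(K_u^\tau)$ and the whole proof reduces to producing an upper bound of the form $\sup_u\vol(K_u^\tau)\leq\exp(-\tau\beta+o(\tau))$ with $\beta$ equal to the right-hand side of \eqref{eq_ie_lb}.

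Single-point volume estimate (mimicking Young). Fix a reference point $x_u\in K_u^\tau$; isolation (P4) traps the forward orbit of every point in $K_u^\tau$ sufficiently close to $x_u$ in a fixed neighborhood of $Q$ on $[0,\tau]$. In exponential-chart coordinates at $x_u$ aligned with the splitting $E^+(u,x_u)\oplus E^{0-}(u,x_u)$ (uniformly transverse by compactness of $L(Q)$ and continuity of the subbundles), a Hadamard--Perron graph-transform fed with the $e^{\lambda t}$-expansion on $E^+$ and the $e^{\ep t}$-weak-expansion on $E^{0-}$ from (P2) constrains $K_u^\tau$ to an $E^+$-thin tube, yielding, for some fixed small $r>0$,
\begin{equation*}
  \vol(K_u^\tau\cap B_r(x_u))\leq C\,J^+\varphi_{\tau,u}(x_u)^{-1}\,e^{\ep d\tau},
\end{equation*}
where the factor $e^{\ep d\tau}$ will be absorbed by eventually letting $\ep\downarrow 0$.

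Center-stable covering and the entropy correction. Since $E^{0-}$ carries no contraction, a single $x_u$ does not control all of $K_u^\tau$: one must cover it by pieces $K_u^\tau\cap B_r(x_{u,i})$ whose centers $\{x_{u,i}\}$ form a $(\tau,\delta)$-spanning set in the center-stable Bowen metric. Assembling empirical measures from the pairs $(u,x_{u,i})$ and extracting a weak$^*$-limit produces a $\Phi_1$-invariant measure $\mu\in\MC_{\Phi_1}(L(Q))$, and the RDS variational principle of \cite{Bog,ZCa} bounds the number of these pieces above by $\exp(\tau h_\mu(\varphi)+o(\tau))$. Combined with the single-point bound this gives
\begin{equation*}
  \sup_u\vol(K_u^\tau)\leq \exp\!\Bigl(-\tau\!\int\!\log J^+\varphi_{1,u}(x)\,\rmd\mu(u,x)+\tau h_\mu(\varphi)+o(\tau)\Bigr).
\end{equation*}

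Conclusion via ergodic optimization. The measure $\mu$ just produced depends on $\tau$; to realise the infimum over all of $\MC_{\Phi_1}(L(Q))$, I would apply the (sub)additive ergodic-optimization results of \cite{Mor} simultaneously to the continuous additive cocycle $\log J^+\varphi_{\tau,\cdot}(\cdot)$ and to the combinatorial count controlling $h_\mu(\varphi)$, so that any accumulation $\mu^\star$ of the sequence of worst-case measures minimises $\int\log J^+\,\rmd\mu^\star-h_{\mu^\star}(\varphi)$ up to $o(1)$. Hypothesis (P3) enters precisely here, guaranteeing that such accumulation points remain supported on $L(Q)$. Taking logarithms, dividing by $\tau$, and letting $\tau\to\infty$ and $\ep,\delta\downarrow 0$ yields \eqref{eq_ie_lb}. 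The main obstacle I expect is the center-stable covering step: converting a purely geometric $E^{0-}$-Bowen count into the RDS fiber entropy $h_\mu(\varphi)$ requires partitions simultaneously adapted to the $E^{0-}$-Bowen structure and to the RDS variational principle, together with a quantitative uniformity in $u$ governed by the modulus of lower semicontinuity of $u\mapsto Q(u)$.
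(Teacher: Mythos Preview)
Your proposal assembles the right large-scale ingredients (a Bowen--Ruelle volume estimate, a covering by Bowen balls, empirical measures, and the RDS variational principle), but two load-bearing steps are misplaced, and without them the argument does not close.

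First, you misattribute the role of (P3). Lower semicontinuity of $u\mapsto Q(u)$ is \emph{not} what keeps accumulation measures supported on $L(Q)$; that is the job of the isolation hypothesis (P4), used in the very last paragraph of the paper's proof. The true role of (P3) is at the \emph{beginning}: combined with upper semicontinuity (Lemma~\ref{lem_fiber_usc}) it gives Hausdorff continuity of $u\mapsto Q(u)$, which is what allows one to pass from ``$\varphi([0,\tau],x,u)\subset Q$'' to ``$\varphi(s,x,u)$ stays $\ep$-close to the moving fiber $Q(\theta_s u)$ for all $s\in[0,\tau]$'' (after discarding a bounded initial and final segment). This is Proposition~\ref{prop_general_lb}. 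Without that reduction the volume lemma cannot be invoked at all, because the lemma only controls Bowen balls whose centers have orbits remaining in a fixed small neighborhood of $L(Q)$; a point $x\in K_u^\tau$ may have $\varphi(s,x,u)\in Q$ but far from $Q(\theta_s u)$, and then neither $E^+$ nor $J^+\varphi$ is defined near it. Your single-point estimate therefore has no domain of validity until (P3) has already been spent.

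Second, the handwave ``apply ergodic optimization'' hides a genuine difficulty. In your scheme the supremum over $u$ sits outside the time limit, so the worst-case control $u=u_\tau$ varies with $\tau$, and one cannot simply extract a weak$^*$ limit of empirical measures built along $(u_\tau,x_{u_\tau,i})$ and land on a $\Phi_1$-invariant measure. The paper resolves this by first interchanging $\limsup_n$ and $\inf_u$ in the estimate for $v^\ep_n(u)=\log\vol(Q_{\rmd}(n,u,\ep))$ (Proposition~\ref{prop_limsupliminf_lb}). This interchange is nontrivial because $v^\ep_n$ is not itself a subadditive cocycle; the device is to sandwich it, up to $O(n\zeta)$ errors, between genuine subadditive (but not known to be measurable) cocycles $w^{\ep,\delta}_n$ built from open covers (Proposition~\ref{prop_wprops}), and to exploit the continuity of $v^\ep_n$ in $u$ (Lemma~\ref{lem_contlem}). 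Only \emph{after} $u$ is fixed does one form empirical measures on $\UC$ along the $\theta$-orbit of $u$ to obtain a $\theta_1$-invariant $P$ (Proposition~\ref{prop_supmeas}), and only then, for that $P$, build a $\Phi_1$-invariant $\mu$ via the standard RDS pressure argument. Your plan collapses these two distinct passages to measures into one. A smaller point: the covering should be by full Bowen balls, not ``center-stable Bowen balls''; $E^{0-}$ need not integrate to an invariant foliation, and the entropy that eventually appears is the ordinary RDS fiber entropy $h_\mu(\varphi)$, not a partial entropy along $E^{0-}$.
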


\begin{remark}
Observing that for a fixed $\theta_1$-invariant measure $P$ on $\UC$, the quantity%
\begin{equation*}
  \sup_{\mu}\left( h_{\mu}(\varphi) - \int_{L(Q)} \log J^+\varphi_{1,u}(x) \rmd\mu(u,x) \right),%
\end{equation*}
the supremum taken over all $\Phi_1$-invariant measures $\mu$ with marginal $P$ on $\UC$, equals the topological pressure of the random dynamical system $\varphi$ over $(\UC,\BC(\UC),P,\theta_1)$ w.r.t.~the observable $-\log J^+\varphi_{1,u}(x)$ by the variational principle for bundle RDS \cite{ZCa}, we can also write \eqref{eq_ie_lb} as%
\begin{equation*}
  h_{\inv}(K,Q) \geq -\sup_P P_{\tp}(\varphi_{|L(Q)};-\log J^+\varphi_{1,u}(x)).%
\end{equation*}
\end{remark}

\section{Proof of the main result}\label{sec_proof}

We prove Theorem \ref{thm_mainresult} through a series of lemmas and propositions. The whole proof is subdivided into three subsections and proceeds along the following steps:%
\begin{enumerate}
\item[(1)] In Subsection \ref{subsec_uer}, we use the lower semicontinuity assumption (P3) to derive a first estimate on $h_{\inv}(K,Q)$ in terms of a quantity which can be regarded as a uniform escape rate from the $\ep$-neighborhoods of the fibers $Q(u)$, $u\in\UC$ (for arbitrary $\ep$). This estimate reads%
\begin{equation}\label{eq_firstbound}
  h_{\inv}(K,Q) \geq \limsup_{t\rightarrow\infty}\inf_{u\in\UC}-\frac{1}{t}\log\vol\left(Q(t,u,\ep)\right),%
\end{equation}
where $Q(t,u,\ep)$ is the set of all initial states $x\in M$ whose trajectories $\varphi(s,x,u)$ under the application of the control $u$ stay $\ep$-close to the shifted fiber $Q(\theta_su)$ at each time $s \in [0,t]$. The proof is a modification of \cite[Thm.~4.5]{DK1}, where the fibers were assumed to be singletons (and hence, the assumption of lower semicontinuity was trivially satisfied).%
\item[(2)] In Subsection \ref{subsec_interchange}, we use a version of the Bowen-Ruelle volume lemma \cite{You} for skew-products (proved in the Appendix) to show that the order of the $\limsup$ and the infimum in \eqref{eq_firstbound} can be interchanged under the limit for $\ep\downarrow0$, leading to%
\begin{equation}\label{eq_secondbound}
  h_{\inv}(K,Q) \geq \lim_{\ep\downarrow0}\inf_{u\in\UC}\limsup_{t\rightarrow\infty}-\frac{1}{t}\log\vol\left(Q(t,u,\ep)\right).%
\end{equation}
More precisely, we prove this for a time-discretized version of the control flow. The main idea in the proof of \eqref{eq_secondbound} is to show that the mapping $(t,u) \mapsto \log\vol(Q(t,u,\ep))$ can be approximated by subadditive cocycles over the shift on $\UC$. For continuous subadditive cocycles it is known that infima and limits in such expressions can be interchanged. Here we do not have continuity of the approximating cocycles, but continuity of $\log\vol(Q(t,u,\ep))$ with respect to $u$, which suffices to carry out the proof. For the volume lemma used here, the partial hyperbolicity assumption (P2) is essential. For the approximation result, also assumption (P4) is needed.%
\item[(3)] Finally, in Subsection \ref{subsec_rer} we show that the infimum over the controls $u\in\UC$ can be replaced by an infimum over the shift-invariant probability measures $P$ on $\UC$, more precisely:%
\begin{equation}\label{eq_thirdbound}
  h_{\inv}(K,Q) \geq \lim_{\ep\downarrow0}\inf_P\limsup_{t\rightarrow\infty}-\frac{1}{t}\int \log\vol\left(Q(t,u,\ep)\right)\rmd P(u).%
\end{equation}
The proof of \eqref{eq_thirdbound} is based on ideas used in subadditive ergodic optimization. Observing that the numbers
\begin{equation*}
  \limsup_{t\rightarrow\infty}-\frac{1}{t}\int \log\vol\left(Q(t,u,\ep)\right)\rmd P(u)%
\end{equation*}
are known as random escape rates in the theory of random dynamical systems, we can apply techniques from the standard proof of the variational principle for pressure of random dynamical systems to derive our main result from \eqref{eq_thirdbound}.%
\end{enumerate}

\subsection{An estimate in terms of a uniform escape rate}\label{subsec_uer}

We start with a simple observation.%

\begin{lemma}\label{lem_fiber_usc}
The map $u \mapsto Q(u)$ from $\UC$ into the space of nonempty compact subsets of $Q$ is upper semicontinuous. Hence, by property (P3), this map is continuous with respect to the Hausdorff metric.%
\end{lemma}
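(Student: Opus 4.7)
The plan is to reduce upper semicontinuity of $u \mapsto Q(u)$ to the compactness of the lift $L(Q)$, which is guaranteed by (P1). In the setting of set-valued maps into a compact metric space, upper semicontinuity is equivalent to the map having closed graph, so I will argue via sequences.

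Concretely, I would let $u_n \to u$ in $\UC$ and $x_n \in Q(u_n)$ with $x_n \to x \in M$. By definition of the $u$-fiber, $(u_n,x_n) \in L(Q)$ for all $n$, and since $L(Q)$ is compact (hence closed) in $\UC \tm M$, the limit point $(u,x)$ also lies in $L(Q)$. Therefore $x \in Q(u)$, which gives the closed-graph, i.e., upper semicontinuity, property. Since the codomain $Q$ is compact, this standard characterization of upper semicontinuity via sequences is justified without further hypotheses.

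For the second claim, once upper semicontinuity is established, I would combine it with the lower semicontinuity supplied by (P3): a set-valued map with nonempty compact values into a compact metric space is continuous with respect to the Hausdorff metric precisely when it is both upper and lower semicontinuous. This yields Hausdorff-continuity of $u \mapsto Q(u)$ immediately. The assumption that $Q(u) \neq \emptyset$ for all $u$, remarked upon just before the lemma, ensures we are genuinely working with nonempty compact values so that Hausdorff distance is well-defined.

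There is no real obstacle here; the lemma is a routine consequence of the compactness of $L(Q)$ together with the definition of $Q(u)$ as the projection-fiber. The only mild care needed is to use the correct notion of upper semicontinuity (closed graph / sequential form) appropriate for compact-valued maps, rather than the neighborhood form, which would require an additional short argument but follows in the same way from compactness of $Q$.
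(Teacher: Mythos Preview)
Your proof is correct and essentially identical to the paper's: both reduce upper semicontinuity to the sequential closed-graph condition $u_n\to u$, $x_n\in Q(u_n)$, $x_n\to x$ $\Rightarrow$ $x\in Q(u)$, with the only cosmetic difference that you invoke the compactness of $L(Q)$ directly while the paper spells out that $\varphi(t,x,u)=\lim_n\varphi(t,x_n,u_n)\in Q$ via continuity of $\varphi(t,\cdot,\cdot)$ and closedness of $Q$.
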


\begin{proof}
Since $Q$ is compact, it suffices to prove outer semicontinuity, i.e., for $u_n \rightarrow u$ in $\UC$ and $x_n \in Q(u_n)$ with $x_n \rightarrow x$ we have to show that $x\in Q(u)$. To this end, observe that the continuity of $\varphi(t,\cdot,\cdot)$ for every $t\in\R$ implies $\varphi(t,x,u) = \lim_{n\rightarrow\infty}\varphi(t,x_n,u_n)$. Since $\varphi(t,x_n,u_n) \in Q$ for every $n$ and $Q$ is closed, $\varphi(t,x,u) \in Q$ follows, implying that $x\in Q(u)$.%
\end{proof}

We will now prove a generalization of the lower estimate for the invariance entropy in \cite[Thm.~4.5]{DK1}. To this end, for arbitrary $t>0$, $u\in\UC$ and $\ep>0$ we introduce the set%
\begin{equation*}
  Q(t,u,\ep) := \{x\in M\ :\ \dist(\varphi(s,x,u),Q(\theta_su)) \leq \ep,\ 0 \leq s \leq t\} = \bigcap_{s\in[0,t]}\varphi_{s,u}^{-1}N_{\ep}(Q(\theta_su)).%
\end{equation*}

The following proposition only uses property (P3) to derive a first lower bound on $h_{\inv}(K,Q)$. In particular, it holds if $Q(u)$ is a singleton for every $u\in\UC$, since in this case $u \mapsto Q(u)$ is trivially lower semicontinuous. For this case, the proposition was proved in \cite{DK1}.%

\begin{proposition}\label{prop_general_lb}
For every compact subset $K\subset Q$ of positive volume and every $\ep>0$, the invariance entropy satisfies%
\begin{equation}\label{eq_ielb}
  h_{\inv}(K,Q) \geq \limsup_{t\rightarrow\infty}\inf_{u\in\UC}-\frac{1}{t}\log\vol\left(Q(t,u,\ep)\right).%
\end{equation}
\end{proposition}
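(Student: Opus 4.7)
The plan is to adapt the lower-bound argument of \cite[Thm.~4.5]{DK1}, where the same inequality was established under the stronger assumption that each $u$-fiber $Q(u)$ is a singleton, so that continuity of $u \mapsto Q(u)$ was automatic. Here the needed continuity is supplied by combining property (P3) with Lemma \ref{lem_fiber_usc}, which yields Hausdorff continuity of $u \mapsto Q(u)$ on the compact space $\UC$ and, consequently, uniform continuity with respect to $d_{\UC}$.

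First I would take, for each $\tau > 0$, a $(\tau,K,Q)$-spanning set $\SC \subset \UC$ of minimal cardinality $r_{\inv}(\tau,K,Q)$ (assuming it is finite, otherwise \eqref{eq_ielb} is trivial) and set $K(u) := \{x \in K : \varphi([0,\tau],x,u) \subset Q\}$ for $u \in \SC$. Since $K = \bigcup_{u \in \SC} K(u)$, subadditivity of the Riemannian volume gives
$$\vol(K) \leq r_{\inv}(\tau,K,Q) \cdot \sup_{u \in \SC}\vol(K(u)).$$
The goal is then to bound each $\vol(K(u))$ by $\sup_{v \in \UC}\vol(Q((1-2\delta)\tau,v,\ep))$ for arbitrary $\delta > 0$. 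Granting this, taking logarithms, dividing by $\tau$, passing to $\limsup_{\tau\to\infty}$ and finally letting $\delta \downarrow 0$ yields \eqref{eq_ielb}, using $\vol(K) > 0$ to ensure that $\tau^{-1}\log\vol(K) \to 0$.

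To produce the required containment, for each $x \in K(u)$ I would invoke (P1) to construct an extension $u^*_x \in \UC$ of $u|_{[0,\tau]}$ with $(u^*_x,x) \in L(Q)$. The cocycle property then gives $\varphi(s,x,u) = \varphi(s,x,u^*_x) \in Q(\theta_s u^*_x)$ for all $s \in [0,\tau]$. Because $u^*_x$ and $u$ coincide on $[0,\tau]$, for any $s$ in the interior sub-interval $[\delta\tau,(1-\delta)\tau]$ the shifts $\theta_s u^*_x$ and $\theta_s u$ agree on the two-sided window $[-\delta\tau,\delta\tau]$, which makes them arbitrarily close in the weak$^*$-compatible metric $d_{\UC}$ once $\tau$ is large, with the bound depending on $\tau$ and $\delta$ but not on $x$. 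Uniform continuity of $u \mapsto Q(u)$ then gives $d_H(Q(\theta_s u^*_x),Q(\theta_s u)) \leq \ep$, hence $\dist(\varphi(s,x,u),Q(\theta_s u)) \leq \ep$ on that sub-interval; after a time shift by $\delta\tau$ this translates to $K(u) \subset Q((1-2\delta)\tau,\theta_{\delta\tau}u,\ep)$.

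The main obstacle I expect is arranging the closeness $d_{\UC}(\theta_s u^*_x,\theta_s u) \to 0$ uniformly in $x \in K(u)$. The subtlety is that the weak$^*$ topology does not make controls that agree merely on $[0,\tau]$ close; only when shifted so that the agreement becomes two-sided on a window growing to all of $\R$ does closeness follow. This forces a loss of a boundary portion of length $\delta\tau$ on each side and is the source of the factor $(1-2\delta)$, which is removed only at the end by letting $\delta \downarrow 0$. Provided this uniform-in-$x$ weak$^*$-continuity step is implemented rigorously, the remaining arithmetic—supremum over $u \in \SC$ absorbed into $\sup_{v\in\UC}$, logarithm, division by $\tau$, and iterated limit—is straightforward.
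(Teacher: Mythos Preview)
Your strategy is essentially the paper's: cover $K$ by the sets $K(u)$, observe that each $x\in K(u)$ lies on a full $L(Q)$-orbit for some control $u^*_x$ agreeing with $u$ on $[0,\tau]$ (the paper encodes this via the identity $Q^{\pm}(v,\tau_0)=\bigcup_{v^*|_{[-\tau_0,\tau_0]}=v|_{[-\tau_0,\tau_0]}}Q(v^*)$), and then use Hausdorff continuity of $u\mapsto Q(u)$ to pass from $Q(\theta_s u^*_x)$ to $N_\ep(Q(\theta_s u))$ when the shifted controls agree on a large two-sided window.

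There is, however, a small gap. The containment you actually obtain after the time shift is $\varphi_{\delta\tau,u}(K(u))\subset Q((1-2\delta)\tau,\theta_{\delta\tau}u,\ep)$, not $K(u)\subset Q((1-2\delta)\tau,\theta_{\delta\tau}u,\ep)$ as you wrote. Converting this into a bound on $\vol(K(u))$ introduces the Jacobian of $\varphi_{\delta\tau,u}^{-1}$, which can grow like $\rme^{c\delta\tau}$ since the shift length $\delta\tau$ is unbounded. The argument survives---after dividing by $\tau$ you pick up an additive $c\delta$ that vanishes as $\delta\downarrow 0$---but this correction must be made explicit.

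The paper sidesteps this altogether by using a \emph{fixed} margin $\tau_0$ in place of your proportional $\delta\tau$: a compactness argument on $\UC$ produces $\tau_0>0$, depending only on $\ep$, such that any two controls agreeing on $[-\tau_0,\tau_0]$ have $\ep$-close fibers. One then gets $\varphi_{\tau_0,u}(Q(u,2\tau_0+t))\subset Q(t,\theta_{\tau_0}u,\ep)$ for all $t>0$, and since $\tau_0$ is fixed the Jacobian distortion is a constant independent of $t$ and disappears in the $\limsup$. In fact your own observations already yield such a $\tau_0$: uniform continuity of $Q(\cdot)$ on the compact space $\UC$, together with the fact that agreement on $[-T,T]$ forces $d_{\UC}$-closeness uniformly as $T\to\infty$, gives a fixed threshold window. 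Using it would make your argument both cleaner and complete.
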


\begin{proof}
The proof proceeds in two steps.%

\emph{Step 1.} For each $u\in\UC$ and $\tau>0$ we define the two sets%
\begin{equation*}
  Q(u,\tau) := \left\{ x\in M\ :\ \varphi(t,x,u) \in Q,\ \forall t \in [0,\tau] \right\}%
\end{equation*}
and%
\begin{equation*}
  Q^{\pm}(u,\tau) := \left\{ x \in M\ :\ \varphi(t,x,u) \in Q,\ \forall t \in [-\tau,\tau] \right\}.%
\end{equation*}
Precisely the same way as in \cite[Thm.~4.5]{DK1} (where the fibers $Q(u)$ were assumed to be singletons), we can prove that%
\begin{equation}\label{eq_qftchar}
  Q^{\pm}(u,\tau) = \bigcup_{u^*_{|[-\tau,\tau]} = u_{|[-\tau,\tau]}} Q(u^*).%
\end{equation}
Now let $\ep>0$ and $u_0\in\UC$. By continuity of $u \mapsto Q(u)$ in the Hausdorff metric, there exists a neighborhood $V$ of $u_0$ so that%
\begin{equation}\label{eq_small_hausdorff_distance}
  Q(v) \subset N_{\ep/2}(Q(u_0)) \mbox{ and } Q(u_0) \subset N_{\ep/2}(Q(v)) \mbox{\quad for all\ } v \in V.%
\end{equation}
By definition of the weak$^*$-topology, we can choose this neighborhood of the form%
\begin{equation*}
  V = \left\{ u\in\UC\ :\ \Bigl|\int_{\R}\langle u(t)-u_0(t),x_i(t) \rangle \rmd t\Bigr| < 1,\ 1 \leq i \leq k \right\}%
\end{equation*}
for some $x_1,\ldots,x_k \in L^1(\R,\R^m)$. We choose $\tau_0>0$ so that%
\begin{equation*}
  \int_{\R\backslash[-\tau_0,\tau_0]}|x_i(t)|\rmd t < \frac{1}{2\diam U},\quad i=1,\ldots,k.%
\end{equation*}
Now let $u \in V_{1/2} := \{u\in\UC\ :\ |\int_{\R} \langle u(t)-u_0(t),x_i(t)\rangle\rmd t| < 1/2,\ 1 \leq i \leq k\}$ and consider $u^* \in \UC$ with $u^*_{|[-\tau_0,\tau_0]} = u_{|[-\tau_0,\tau_0]}$. We have%
\begin{equation*}
  \left|\int_{\R}\langle u^*(t)-u(t),x_i(t)\rangle\rmd t\right| \leq \diam U \int_{\R\backslash[-\tau_0,\tau_0]}|x_i(t)|\rmd t < \frac{1}{2},%
\end{equation*}
implying that for $i = 1,\ldots,k$,%
\begin{align*}
  \left|\int_{\R}\langle u^*(t)-u_0(t),x_i(t)\rangle\rmd t\right| &\leq \left|\int_{\R}\langle u^*(t)-u(t),x_i(t)\rangle\rmd t\right|\\
	&\qquad + \left|\int_{\R} \langle u(t)-u_0(t),x_i(t)\rangle\rmd t\right| < \frac{1}{2} + \frac{1}{2} = 1.%
\end{align*}
Hence, $u^* \in V$ and thus $Q(u^*) \subset N_{\ep/2}(Q(u_0))$ by \eqref{eq_small_hausdorff_distance}. We also have $Q(u_0) \subset N_{\ep/2}(Q(u))$ by $V_{1/2} \subset V$ and \eqref{eq_small_hausdorff_distance}. Hence, $x\in Q(u^*)$ implies the existence of $y\in Q(u_0)$ with $d(x,y) \leq \ep/2$ and $z \in Q(u)$ with $d(y,z) \leq \ep/2$. This yields $x \in N_{\ep}(Q(u))$, so by \eqref{eq_qftchar},%
\begin{equation*}
  Q^{\pm}(u,\tau_0) \subset N_{\ep}(Q(u)) \mbox{\quad for all\ } u \in V_{1/2}.%
\end{equation*}
Letting $u_0$ range through $\UC$, the open sets $V_{1/2} = V_{1/2}(u_0)$ cover $\UC$. By compactness, we can choose a finite subcover. This implies the existence of $\tau_0>0$ with%
\begin{equation*}
  Q^{\pm}(u,\tau_0) \subset N_{\ep}(Q(u)) \mbox{\quad for all\ } u \in \UC.%
\end{equation*}

\emph{Step 2.} Now consider the invariance entropy. Since the desired estimate becomes trivial if $h_{\inv}(K,Q) = \infty$, we may assume that finite $(\tau,K,Q)$-spanning sets exist for all $\tau>0$.
If $\SC$ is a minimal $(\tau,K,Q)$-spanning set, then%
\begin{equation}\label{eq_ie_vol}
  K \subset \bigcup_{u\in\SC}Q(u,\tau).%
\end{equation}
The same arguments as used in \cite[Thm.~4.5]{DK1} show that for every $\ep>0$ there is $\tau>0$ so that for all $u\in\UC$ and $t>0$,%
\begin{equation*}
  \varphi_{\tau,u}(Q(u,2\tau+t)) \subset \bigcap_{s\in[0,t]}\varphi_{s,\theta_{\tau}u}^{-1}N_{\ep}(Q(\theta_{s+\tau}u)) = Q(t,\theta_{\tau}u,\ep).%
\end{equation*}
Writing $v := \theta_{\tau}u$, we thus obtain%
\begin{equation*}
  \vol(Q(u,2\tau+t)) \leq \vol\left(\varphi_{\tau,u}^{-1}Q(t,v,\ep)\right).%
\end{equation*}
Since $\varphi_{\tau,u}^{-1}$ does not influence the exponential behavior of the right-hand side as $t\rightarrow\infty$ and \eqref{eq_ie_vol} implies that for a minimal $(2\tau+t,K,Q)$-spanning set $\SC_{2\tau+t}$ we have%
\begin{equation*}
  0 < \vol(K) \leq r_{\inv}(2\tau+t,K,Q) \cdot \max_{u\in\SC_{2\tau+t}} \vol(Q(u,2\tau+t)),%
\end{equation*}
the desired estimate%
\begin{equation*}
  h_{\inv}(K,Q) \geq \limsup_{t\rightarrow\infty}\inf_{u\in\UC} -\frac{1}{2\tau+t} \log \vol\left(Q(t,u,\ep)\right) = \limsup_{t\rightarrow\infty}\inf_{u\in\UC}-\frac{1}{t}\log\vol\left(Q(t,u,\ep)\right)%
\end{equation*}
follows.%
\end{proof}

\subsection{Interchanging limit superior and infimum}\label{subsec_interchange}

Our goal in this subsection is to interchange the order of $\limsup$ and infimum in the estimate \eqref{eq_ielb}. To simplify matters, we will work with a time-discretized version of the control flow from now on. We start with the following lemma.%

\begin{lemma}\label{lem_contlem}
Fix $t>0$, $\ep>0$ and $r\in\N$. Consider $r+1$ equi-distributed points $0 = s_0 < s_1 < \cdots < s_r = t$ in $[0,t]$ and define%
\begin{equation*}
  Q(t,u,\ep;r) := \bigcap_{0\leq i \leq r}\varphi_{s_i,u}^{-1}N_{\ep}(Q(\theta_{s_i}u)).%
\end{equation*}
Then the function $u \mapsto \vol(Q(t,u,\ep;r))$ is continuous.%
\end{lemma}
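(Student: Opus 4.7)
The plan is to rewrite $Q(t,u,\ep;r)$ as a sublevel set of a jointly continuous function on $\UC \tm M$ and then deduce continuity of its volume by a sandwich argument. Concretely, I define
\[
  g(u,x) := \max_{0 \leq i \leq r} \dist\bigl(\varphi_{s_i,u}(x),\, Q(\theta_{s_i}u)\bigr),
\]
so that $Q(t,u,\ep;r) = \{x \in M : g(u,x) \leq \ep\}$ and the task reduces to showing continuity of $u \mapsto \vol(\{g(u,\cdot) \leq \ep\})$.

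The first step is to verify joint continuity of $g$ on $\UC \tm M$. For each $i$, the map $(u,x) \mapsto \varphi_{s_i,u}(x)$ is continuous by continuity of the control flow, and $u \mapsto Q(\theta_{s_i}u)$ is continuous in the Hausdorff metric by Lemma \ref{lem_fiber_usc}. Combined with the elementary inequality
\[
  |\dist(y,A) - \dist(y',B)| \leq d(y,y') + d_H(A,B),
\]
this yields joint continuity of $(u,x) \mapsto \dist(\varphi_{s_i,u}(x), Q(\theta_{s_i}u))$, and taking the maximum over the finite set $\{s_0,\ldots,s_r\}$ preserves continuity.

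The second step is the sandwich. Since $Q(t,u,\ep;r)$ is contained in a fixed bounded region (a compact neighborhood of $Q$ for $\ep$ sufficiently small), joint continuity of $g$ combined with compactness in $x$ supplies, for each $u_0 \in \UC$ and each $\eta > 0$, a neighborhood $V$ of $u_0$ on which $|g(u,x) - g(u_0,x)| < \eta$ uniformly in $x$. Hence for $u \in V$,
\[
  \{x : g(u_0,x) \leq \ep - \eta\} \subset Q(t,u,\ep;r) \subset \{x : g(u_0,x) \leq \ep + \eta\},
\]
from which
\[
  \vol\{g(u_0,\cdot) \leq \ep - \eta\} \leq \vol(Q(t,u,\ep;r)) \leq \vol\{g(u_0,\cdot) \leq \ep + \eta\}.
\]

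The main obstacle is to close the sandwich as $\eta \downarrow 0$, which requires the level set $\{g(u_0,\cdot) = \ep\}$ to have zero volume. This set is contained in the finite union of $\varphi_{s_i,u_0}^{-1}\bigl(\{y : \dist(y, Q(\theta_{s_i}u_0)) = \ep\}\bigr)$, each a level set of a $1$-Lipschitz function pushed forward by a diffeomorphism. Since the distinct level sets of a $1$-Lipschitz function are pairwise disjoint and our ambient region has finite $\vol$-measure, only countably many heights $\ep$ can produce positive-volume level sets. Thus continuity holds for every $\ep$ outside a countable exceptional set; since the lemma is subsequently combined with the limit $\ep \downarrow 0$, this restricted conclusion already suffices for the rest of the proof. (For the exceptional $\ep$ only upper and lower semicontinuity of $u \mapsto \vol(Q(t,u,\ep;r))$ can be read off directly from the sandwich.)
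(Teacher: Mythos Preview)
Your sandwich argument via the jointly continuous function $g(u,x)=\max_i\dist(\varphi_{s_i,u}(x),Q(\theta_{s_i}u))$ is a clean and genuinely different route from the paper's proof, which instead expands $\vol(Q(t,u,\ep;r))$ as an integral of a product of indicator functions, telescopes the difference $\vol(Q(t,u,\ep;r))-\vol(Q(t,\tilde u,\ep;r))$ into a sum bounded by volumes of symmetric differences, and then pushes each of these to zero by hand. Your approach is shorter and more conceptual; both, however, ultimately hinge on the same geometric fact, namely that $\vol(\partial N_{\ep}(K))=0$.

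The gap in your argument is precisely at that point. Your countability argument shows that for each \emph{fixed} $u_0$, at most countably many values of $\ep$ satisfy $\vol(\{y:\dist(y,Q(\theta_{s_i}u_0))=\ep\})>0$. But this exceptional set of $\ep$ depends on $u_0$; taking the union over the uncountable family $\{u_0\in\UC\}$ gives no bound, so you cannot conclude that there is a single co-countable set of $\ep$ for which $u\mapsto\vol(Q(t,u,\ep;r))$ is continuous on all of $\UC$. Your fallback claim, that the restricted conclusion ``suffices for the rest of the proof'', is therefore not established either: later arguments (e.g.\ Proposition~\ref{prop_supmeas}) need continuity of $v_m^{\ep'}$ at \emph{every} $u$ for several shifted values $\ep'=\ep,\ep+\delta/4,\ep+\delta/2$ simultaneously.

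The fix is immediate once you invoke Lemma~\ref{lem_volzero} from the Appendix: for \emph{every} nonempty compact $K\subset M$ and \emph{every} $\ep>0$ one has $\vol(\partial N_{\ep}(K))=0$, i.e.\ the level set $\{y:\dist(y,K)=\ep\}$ has zero volume with no exceptions. With this, your sandwich closes for every $\ep>0$ and every $u_0\in\UC$, and your proof of the lemma as stated is complete. The paper's own proof uses Lemma~\ref{lem_volzero} in its first line for exactly the same reason.
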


\begin{proof}
By Lemma \ref{lem_volzero}, we have $\vol(\partial N_{\ep}(Q(u))) = 0$ for all $u \in \UC$, which we will use later. For brevity, we write $A_i(u) := N_{\ep}(Q(\theta_{s_i}u))$. Then%
\begin{equation*}
  \vol(Q(t,u,\ep;r)) = \int_{N_{\ep}(Q)} \unit_{A_0(u)}(x)\unit_{A_1(u)}(\varphi_{s_1,u}(x)) \cdots \unit_{A_r(u)}(\varphi_{s_r,u}(x)) \rmd x.%
\end{equation*}
Again for brevity, we write $Q^{\ep} := N_{\ep}(Q)$ and $f_i(u,x) := \unit_{A_i(u)}(\varphi_{s_i,u}(x))$. We fix $u\in\UC$ and prove continuity of $\vol(Q(t,u,\ep;r))$ at $u$. To this end, first observe that for arbitrary $\tilde{u}\in\UC$ we have%
\begin{align*} 
  &\left|\vol(Q(t,u,\ep;r)) - \vol(Q(t,\tilde{u},\ep;r))\right| \leq \allowdisplaybreaks\\
	&\qquad \Bigl|\int_{Q^{\ep}} \left( f_0(u,x) f_1(u,x) \cdots f_r(u,x) - f_0(\tilde{u},x) f_1(u,x) \cdots f_r(u,x) \right)\rmd x\Bigr|\allowdisplaybreaks\\
  &\qquad + \Bigl|\int_{Q^{\ep}} \left( f_0(\tilde{u},x) f_1(u,x) \cdots f_r(u,x) - f_0(\tilde{u},x) f_1(\tilde{u},x) f_2(u,x) \cdots f_r(u,x) \right) \rmd x\Bigr|\allowdisplaybreaks\\
	&\qquad + \cdots \allowdisplaybreaks\\
	&\qquad + \Bigl|\int_{Q^{\ep}} \left( f_0(\tilde{u},x) \cdots f_{r-1}(\tilde{u},x)f_r(u,x) - f_0(\tilde{u},x) \cdots f_r(\tilde{u},x) \right)\rmd x\Bigr|.%
\end{align*}
Now observe that we can estimate the above by%
\begin{equation*}
  \sum_{i=0}^r \int_{Q^{\ep}} |f_i(u,x) - f_i(\tilde{u},x)|\rmd x.%
\end{equation*}
The integral%
\begin{equation*}
  \int_{Q^{\ep}} |f_i(u,x) - f_i(\tilde{u},x)| \rmd x = \int_{Q^{\ep}}\left| \unit_{A_i(u)}(\varphi_{s_i,u}(x)) - \unit_{A_i(\tilde{u})}(\varphi_{s_i,\tilde{u}}(x)) \right| \rmd x%
\end{equation*}
is not larger than the volume of the symmetric set difference%
\begin{equation}\label{eq_symmdiff}
  \left[\varphi_{s_i,u}^{-1}(A_i(u)) \backslash \varphi_{s_i,\tilde{u}}^{-1}(A_i(\tilde{u}))\right] \cup \left[\varphi_{s_i,\tilde{u}}^{-1}(A_i(\tilde{u})) \backslash \varphi_{s_i,u}^{-1}(A_i(u))\right].%
\end{equation}
We show that the volumes of these two sets become arbitrarily small as $\tilde{u}\rightarrow u$:%
\begin{enumerate}
\item[(i)] We write the first term in \eqref{eq_symmdiff} as%
\begin{equation*}
  \varphi_{s_i,u}^{-1}(A_i(u)) \backslash \varphi_{s_i,\tilde{u}}^{-1}(A_i(\tilde{u})) = \varphi_{s_i,u}^{-1}\left(A_i(u) \backslash \varphi_{s_i,u}(\varphi_{s_i,\tilde{u}}^{-1}(A_i(\tilde{u})))\right).%
\end{equation*}
Since $u$ is fixed, it suffices to show that the volume of $A_i(u) \backslash \varphi_{s_i,u}(\varphi_{s_i,\tilde{u}}^{-1}(A_i(\tilde{u})))$ becomes small as the distance $d_{\UC}(\tilde{u},u)$ becomes small. Using the notation $I_{\rho}(B) := \{ x\in\inner B\ :\ \dist(x,\partial B) \geq \rho \}$ for any subset $B\subset M$, it is enough to show that%
\begin{equation*}
	A_i(u) \backslash \varphi_{s_i,u}(\varphi_{s_i,\tilde{u}}^{-1}(A_i(\tilde{u}))) \subset A_i(u) \backslash I_{\rho}(A_i(u))%
\end{equation*}
for arbitrarily small $\rho$ as $\tilde{u}\rightarrow u$. Here we use continuity of the measure and $\vol(\partial A_i(u)) = 0$. The above inclusion is implied by%
\begin{equation*}
  \varphi_{s_i,\tilde{u}} \circ \varphi_{s_i,u}^{-1}(I_{\rho}(A_i(u))) \subset A_i(\tilde{u}).%
\end{equation*}
Take $x \in I_{\rho}(A_i(u))$ and let $y \in Q(\theta_{s_i}u)$ be a point that minimizes the distance $d(x,y)$, i.e., $d(x,y) = \dist(x,Q(\theta_{s_i}u))$. Let $\tilde{y} \in Q(\theta_{s_i}\tilde{u})$ be chosen so that $d(y,\tilde{y}) \leq d_H(Q(\theta_{s_i}\tilde{u}),Q(\theta_{s_i}u))$. Then%
\begin{equation*}
  d(\varphi_{s_i,\tilde{u}} \circ \varphi_{s_i,u}^{-1}(x),\tilde{y}) \leq d(\varphi_{s_i,\tilde{u}} \circ \varphi_{s_i,u}^{-1}(x),x) + d(x,y) + d_H(Q(\theta_{s_i}\tilde{u}),Q(\theta_{s_i}u)).%
\end{equation*}
If we can show that this sum becomes smaller than $\ep$ (independently of the choice of $x$) if $d_{\UC}(\tilde{u},u)$ is chosen small enough, we are done. The third term becomes small by continuity of $Q(\cdot)$ and $\theta$. The first term becomes small by the continuity properties of $\varphi$. Indeed, $\varphi(s_i,\cdot,\cdot)$ is uniformly continuous on an appropriately chosen compact set, showing that $d(\varphi_{s_i,\tilde{u}}(\varphi_{s_i,u}^{-1}(x)),\varphi_{s_i,u}(\varphi_{s_i,u}^{-1}(x))) \rightarrow 0$ as $\tilde{u} \rightarrow u$, independently of the choice of $x$. Now $x \in I_{\rho}(A_i(u))$ implies that the second term is smaller than and uniformly bounded away from $\ep$. This implies the result.%
\item[(ii)] Consider now the second term in \eqref{eq_symmdiff}. Writing%
\begin{equation*}
  \varphi_{s_i,\tilde{u}}^{-1}(A_i(\tilde{u})) \backslash \varphi_{s_i,u}^{-1}(A_i(u)) = \varphi_{s_i,u}^{-1}\bigl(\varphi_{s_i,u} \circ \varphi_{s_i,\tilde{u}}^{-1} (A_i(\tilde{u}))\backslash A_i(u) \bigr),%
\end{equation*}
we see that it suffices to prove that the volume of $\varphi_{s_i,u} \circ \varphi_{s_i,\tilde{u}}^{-1} (A_i(\tilde{u}))\backslash A_i(u)$ tends to zero as $\tilde{u}\rightarrow u$. By the continuity properties of $\varphi$ it follows that for any $\rho>0$ we have%
\begin{equation*}
  \varphi_{s_i,u} \circ \varphi_{s_i,\tilde{u}}^{-1}(A_i(\tilde{u})) \subset N_{\rho}(A_i(\tilde{u})),%
\end{equation*}
provided that $d_{\UC}(\tilde{u},u)$ is sufficiently small. Hence,%
\begin{equation*}
  \varphi_{s_i,u} \circ \varphi_{s_i,\tilde{u}}^{-1} (A_i(\tilde{u}))\backslash A_i(u) \subset N_{\rho+\ep}(Q(\theta_{s_i}\tilde{u})) \backslash N_{\ep}(Q(\theta_{s_i}u)).%
\end{equation*}
From the Hausdorff convergence $Q(\theta_{s_i}\tilde{u}) \rightarrow Q(\theta_{s_i}u)$ it follows that $N_{\rho+\ep}(Q(\theta_{s_i}\tilde{u})) \subset N_{2\rho+\ep}(Q(\theta_{s_i}u))$ for $d_{\UC}(\tilde{u},u)$ sufficiently small. Hence,%
\begin{equation*}
  \varphi_{s_i,u} \circ \varphi_{s_i,\tilde{u}}^{-1} (A_i(\tilde{u}))\backslash A_i(u) \subset N_{2\rho+\ep}(Q(\theta_{s_i}u)) \backslash N_{\ep}(Q(\theta_{s_i}u)).%
\end{equation*}
Now the volume of this set certainly tends to zero as $\rho\rightarrow0$ (by continuity of the measure).%
\end{enumerate}
We have proved that $u \mapsto \vol(Q(t,u,\ep;r))$ is continuous for any $r\in\N$.%
\end{proof}

\begin{remark}
It is easy to show, as a corollary, that also the function $u \mapsto \vol(Q(t,u,\ep))$ is continuous for fixed $t,\ep$. However, we will not need this for our proof.%
\end{remark}

Since $Q(t,u,\ep) \subset Q(t,u,\ep;r(t))$ for any $r(t)\in\N$, Proposition \ref{prop_general_lb} immediately implies the following corollary.

\begin{corollary}
For every $\ep>0$, the invariance entropy satisfies%
\begin{equation}\label{eq_discretized_lb}
  h_{\inv}(K,Q) \geq \limsup_{\N\ni n\rightarrow\infty}\inf_{u\in\UC}-\frac{1}{n}\log\vol\left(Q(n-1,u,\ep;n-1)\right).%
\end{equation}
\end{corollary}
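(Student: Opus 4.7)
The plan is to combine Proposition \ref{prop_general_lb} with the inclusion $Q(t,u,\ep)\subset Q(t,u,\ep;r)$ already flagged in the one-line remark preceding the corollary, together with the elementary fact that a $\limsup$ along a subsequence is bounded above by the full $\limsup$.

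First I would invoke Proposition \ref{prop_general_lb} to bound $h_{\inv}(K,Q)$ from below by $\limsup_{t\to\infty}\inf_{u\in\UC}-\frac{1}{t}\log\vol(Q(t,u,\ep))$, and then restrict the outer $\limsup$ to the integer subsequence $t=n$, $n\in\N$, which can only decrease the quantity. The reason for choosing $t=n$ rather than the more symmetric-looking $t=n-1$ is purely tactical: it makes the leading factor in the final statement come out as $1/n$ on the nose, thereby avoiding any need to absorb an extra factor $(n-1)/n\to 1$ at the end.

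Second, I would apply the pointwise inclusion $Q(n,u,\ep)\subset Q(n-1,u,\ep;n-1)$. This holds because, with $r=n-1$, the equidistributed points $s_0<\dots<s_{n-1}$ in $[0,n-1]$ are precisely the integers $0,1,\dots,n-1$, all of which lie in $[0,n]$; hence any initial condition whose full trajectory on $[0,n]$ stays within the $\ep$-neighborhood of the moving fiber $Q(\theta_s u)$ a fortiori satisfies the weaker sampled condition on $\{0,1,\dots,n-1\}$. Monotonicity of volume and of $-\log$ gives $-\frac{1}{n}\log\vol(Q(n,u,\ep))\geq -\frac{1}{n}\log\vol(Q(n-1,u,\ep;n-1))$ for every $u\in\UC$ and $n\in\N$; passing to $\inf_{u\in\UC}$ and then $\limsup_{n\to\infty}$, and combining with the bound from the first step, yields \eqref{eq_discretized_lb}.

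I do not expect any real obstacle: the argument amounts to a short chain of set inclusions together with a subsequence argument for the $\limsup$, exactly as suggested by the remark preceding the corollary. The only subtlety worth flagging is the choice to evaluate Proposition \ref{prop_general_lb} at $t=n$ rather than at $t=n-1$, which keeps the book-keeping of the normalization clean.
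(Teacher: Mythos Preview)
Your proposal is correct and follows essentially the same route as the paper: the paper simply remarks that $Q(t,u,\ep)\subset Q(t,u,\ep;r(t))$ for any $r(t)\in\N$ and appeals to Proposition \ref{prop_general_lb}. Your choice to specialize $t=n$ rather than $t=n-1$ is a clean way to get the factor $1/n$ directly; with the paper's more literal choice $t=n-1$ one would instead absorb the harmless factor $n/(n-1)\to 1$ in the $\limsup$.
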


We denote the set $Q(n-1,u,\ep;n-1)$ by $Q_{\rmd}(n,u,\ep)$ and study the properties of the functions%
\begin{equation*}
  v^{\ep}:\N \tm \UC \rightarrow \R,\quad v^{\ep}_n(u) := \log\vol(Q_{\rmd}(n,u,\ep)),\quad \ep>0.%
\end{equation*}
From Lemma \ref{lem_contlem} we know that each $v^{\ep}$ is continuous. We would like to interchange the order of the infimum and the limsup in \eqref{eq_discretized_lb}. This would be possible if $v^{\ep}$ was a subadditive cocycle over $\theta_1:\UC\rightarrow\UC$. In general, this is not the case. However, we can approximate $v^{\ep}$ by subadditive cocycles, which we will describe in the following.%

First observe that for any $n\in\N$, $u\in\UC$ and $\ep>0$, the set%
\begin{equation*}
  Q_{\rmd}(n,u,\ep) = \left\{ x\in M\ :\ \dist(\varphi_{j,u}(x),Q(\theta_ju)) \leq \ep,\ 0 \leq j < n \right\}%
\end{equation*}
is closed and bounded, hence compact if $\ep$ is sufficiently small.%

In the following, we will make use of a special version of the Bowen-Ruelle volume lemma. To formulate this lemma, we introduce for $n\in\N$, $\delta>0$ and $(u,x) \in \UC \tm M$ the set%
\begin{equation}\label{eq_def_bowenball}
  B^{n,u}_{\delta}(x) := \left\{ y\in M\ :\ d(\varphi(j,x,u),\varphi(j,y,u)) \leq \delta,\ 0 \leq j < n \right\},%
\end{equation}
which is called a \emph{Bowen-ball of order $n$ and radius $\delta$ centered at $x$}. We say that a set $F \subset M$ $(n,u,\delta)$-spans another set $K \subset M$ if the Bowen-balls $B^{n,u}_{\delta}(x)$, $x\in F$, form a cover of $K$. A set $E \subset M$ is $(n,u,\delta)$-separated if for all $x,y\in E$ with $x\neq y$, $d(\varphi(j,x,u),\varphi(j,y,u)) > \delta$ holds for some $j \in \{0,1,\ldots,n-1\}$.%

The proof of the following lemma can be found in Subsection \ref{subsec_volume_lemma} of the Appendix.%

\begin{lemma}\label{lem_volume}
For all sufficiently small $\ep>0$ and $\delta_0>0$ the following holds. If $u\in\UC$, $n\in\N$, $x \in Q_{\rmd}(n,u,\ep)$ and $\delta \in (0,\delta_0]$, then%
\begin{equation*}
  C_{\delta}^{-1}\rme^{-n\zeta}J^+\varphi_{n,u}(x)^{-1} \leq \vol(B^{n,u}_{\delta}(x)) \leq C_{\delta}\rme^{n\zeta}J^+\varphi_{n,u}(x)^{-1},%
\end{equation*}
where $C_{\delta}>0$ is a constant only depending on $\delta$, $\zeta>0$ is some small number, and $(u,x) \mapsto J^+\varphi_{n,u}(x)$, for each $n\in\N$, is a continuous extension of the map \eqref{eq_unstabledet} to a neighborhood of $L(Q)$, satisfying%
\begin{equation*}
  J^+\varphi_{n+m,u}(x) = J^+\varphi_{n,u}(x) \cdot J^+\varphi_{m,\theta_nu}(\varphi(n,x,u)),%
\end{equation*}
whenever both sides of this equation are defined. Moreover, $\zeta$ can be chosen arbitrarily small in dependence on $\ep$ and $\delta_0$.%
\end{lemma}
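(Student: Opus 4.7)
The strategy is the classical Bowen--Ruelle volume lemma adapted to the bundle/RDS setting. Property (P4) lets us extend the hyperbolic splitting from $L(Q)$ to a neighborhood, and property (P2) then supplies the geometric ingredients (local unstable/center-stable leaves, bounded distortion) needed for the estimate. First I would extend $E^+ \oplus E^{0-}$ continuously from $L(Q)$ to $\UC \tm N_{\delta_0}(Q)$ via invariant cone fields: pick small continuous cones $C^+, C^{0-}$ around $E^+, E^{0-}$ on $L(Q)$, observe that (P2) together with continuity of $\rmD\varphi_{1,u}$ gives forward invariance of $C^+$ and backward invariance of $C^{0-}$ along any orbit segment remaining in the neighborhood, and take asymptotic intersections to obtain continuous extensions $\tilde{E}^+, \tilde{E}^{0-}$. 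Define
\begin{equation*}
  J^+\varphi_{n,u}(x) := \bigl|\det \rmD\varphi_{n,u}(x)_{|\tilde{E}^+(u,x)}\bigr|;
\end{equation*}
the cocycle identity follows from the chain rule, and (P4) ensures the extension coincides with the original map on $L(Q)$.

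Second, using the $C^2$-regularity of $\varphi$ and a fiberwise Hadamard--Perron argument, I would construct local unstable discs $W^u_\alpha(u,x)$ tangent to $\tilde{E}^+(u,x)$ and local center-stable discs $W^{cs}_\alpha(u,x)$ tangent to $\tilde{E}^{0-}(u,x)$ through every point in a neighborhood of $L(Q)$, depending continuously in $C^1$ on $(u,x)$. The product map $\psi_{u,x}: W^u_\alpha(u,x) \tm W^{cs}_\alpha(u,x) \to M$ is then a bi-Lipschitz chart with uniformly bounded Jacobian. Using cone estimates along the orbit of $(u,x)$, I would establish an approximate product structure of the Bowen ball: for $\delta \leq \delta_0$ there exist $c_1(\delta), c_2(\delta) > 0$ such that
\begin{equation*}
  \psi_{u,x}\bigl(\hat{B}^{u,n}_{c_1}(x) \tm \hat{B}^{cs,n}_{c_1}(x)\bigr) \subset B^{n,u}_\delta(x) \subset \psi_{u,x}\bigl(\hat{B}^{u,n}_{c_2}(x) \tm \hat{B}^{cs,n}_{c_2}(x)\bigr),
\end{equation*}
where $\hat{B}^{u,n}_\rho(x)$ and $\hat{B}^{cs,n}_\rho(x)$ denote the Bowen balls within the local unstable and center-stable leaves through $x$.

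Third I would estimate each factor. The map $\varphi_{n,u}$ restricted to $W^u_\alpha(u,x)$ has Jacobian $J^+\varphi_{n,u}$, and $\varphi_{n,u}(\hat{B}^{u,n}_{c_i}(x))$ has diameter of order $c_i$ in $W^u_\alpha(\Phi_n(u,x))$. The change-of-variables formula plus bounded distortion (which follows from the $C^2$-bound on $\log J^+$ and the backward contraction on the unstable leaf, yielding a telescoping estimate summable to a constant) then gives
\begin{equation*}
  c_i^{\dim E^+}\rme^{-n\zeta/2} \leq J^+\varphi_{n,u}(x)\,\vol(\hat{B}^{u,n}_{c_i}(x)) \leq c_i^{\dim E^+}\rme^{n\zeta/2}.
\end{equation*}
On the center-stable leaf, the weak expansion $\rme^{\ep t}$ from (P2) implies $\hat{B}^{cs,n}_{c_i}(x)$ contains a disc of radius $c_i \rme^{-n\zeta/2}$ and lies in one of radius $c_i$, so its volume has order $c_i^{\dim E^{0-}}\rme^{\pm n\zeta/2}$. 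Multiplying through the bi-Lipschitz chart $\psi_{u,x}$ and absorbing the $\delta$-dependent constants into $C_\delta$ yields the claimed two-sided inequality.

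The main obstacle is uniformity: the constants $\alpha$, $c_i(\delta)$, the cone-invariance bounds, the distortion estimates, and the exponent $\zeta$ must all be controlled uniformly for $(u,x)$ whose orbit segment $\{\varphi(j,x,u)\}_{0 \leq j < n}$ consists of $\ep$-pseudo-trajectories of the fibers $Q(\theta_j u)$, rather than along genuine orbits in $L(Q)$. This is where (P4) is essential -- it keeps the relevant segments inside the domain of the extended splitting -- and the bookkeeping must be arranged so that $\zeta = \zeta(\ep,\delta_0)$ tends to $0$ as $\ep, \delta_0 \to 0$, which in turn yields the last assertion of the lemma.
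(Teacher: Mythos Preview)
Your high-level strategy (product structure of the Bowen ball, separate volume estimates on the unstable and center-stable factors) is the right picture, but two of the technical tools you propose do not work as stated, and the paper's approach --- following Young --- deliberately avoids them.

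The central difficulty is that a point $x \in Q_{\rmd}(n,u,\ep)$ need not lie in $L(Q)$; its orbit is only guaranteed to stay near the fibers $Q(\theta_j u)$ for $0 \leq j < n$, after which it may escape entirely. Consequently:
\begin{itemize}
\item Your proposed extension of $E^+,E^{0-}$ via ``asymptotic intersections'' of iterated cones is not defined off $L(Q)$: there is no asymptotic to take along a finite orbit segment. The paper instead takes an arbitrary \emph{continuous, non-invariant} extension of the subbundles to a compact neighborhood $\KC$ of $L(Q)$.
\item With a non-invariant extension, your definition $J^+\varphi_{n,u}(x) = |\det \rmD\varphi_{n,u}(x)_{|\tilde E^+(u,x)}|$ does \emph{not} satisfy the cocycle identity from the chain rule alone, because $\rmD\varphi_{1,u}(x)\tilde E^+(u,x) \neq \tilde E^+(\Phi_1(u,x))$ in general. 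The paper forces multiplicativity by \emph{defining} the extension as the product $\prod_{i=0}^{n-1}|\det \rmD\varphi_{1,\theta_i u}(\varphi(i,x,u))_{|E^+(\Phi_i(u,x))}|$ of one-step determinants.
\item A Hadamard--Perron construction of genuine local unstable and center-stable discs again requires bi-infinite (or at least half-infinite) orbits in the hyperbolic region. The paper replaces this with a \emph{finite-time graph transform}: starting from flat slices $g^w \equiv w$ in exponential coordinates, it applies the graph transform $n$ times (Lemmas~\ref{lem_33} and~\ref{lem_34}) and foliates the Bowen ball by the preimages of the resulting graphs. The determinant comparison in Lemma~\ref{lem_34} gives the factor $\rme^{\pm 2\ep'}$ per step, and integrating over the $w$-parameter produces the volume estimate (Proposition~\ref{prop_volest}).
\end{itemize}

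Finally, property (P4) plays no role in this lemma. The orbit segments of points in $Q_{\rmd}(n,u,\ep)$ lie in the domain of the extended splitting simply because $\ep$ is chosen small enough that $N_\ep(Q(\theta_j u))$ sits inside the projection of $\KC$; isolation is used elsewhere in the paper (Proposition~\ref{prop_wprops}(iv)), not here.
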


Let $\AC = (\AC_j)_{j=0}^{\infty}$ be a sequence so that $\AC_j$ is an open cover of the compact set $N_{\ep}(Q(\theta_ju))$ (i.e., a collection of open sets in $M$ whose union contains $N_{\ep}(Q(\theta_ju))$). We write%
\begin{equation*}
  \AC^n := \bigvee_{j=0}^{n-1}\varphi_{j,u}^{-1}(\AC_j),\quad n \in \N.%
\end{equation*}
This is the collection of all sets of the form%
\begin{equation*}
  A_0 \cap \varphi_{1,u}^{-1}(A_1) \cap \ldots \cap \varphi_{n-1,u}^{-1}(A_{n-1}),\quad A_j \in \AC_j.%
\end{equation*}
Observe that $\AC^n$ is an open cover of the compact set $Q_{\rmd}(n,u,\ep)$. We define%
\begin{equation}\label{eq_defwcocycle}
  w^{\ep,\AC}_n(u) := \log\inf\left\{ \sum_{A\in \alpha}\sup_{x\in A}J^+\varphi_{n,u}(x)^{-1} : \alpha \mbox{ is a finite subcover of } \AC^n \mbox{ for } Q_{\rmd}(n,u,\ep) \right\}.%
\end{equation}
Moreover, we write $\AC(n)$ for the shifted sequence $\AC_n,\AC_{n+1},\AC_{n+2},\ldots$.%

Now let $\alpha$ be a finite subcover of $\AC^n$ for $Q_{\rmd}(n,u,\ep)$ and $\beta$ a finite subcover of $\AC(n)^m$ for $Q_{\rmd}(m,\theta_nu,\ep)$. Then%
\begin{align*}
 \sum_{C \in \alpha \vee \varphi_{n,u}^{-1}(\beta)}\sup_{z\in C}J^+\varphi_{n+m,u}(z)^{-1} &= \sum_{C \in \alpha \vee \varphi_{n,u}^{-1}(\beta)}\sup_{z\in C}\Bigl[J^+\varphi_{n,u}(z)^{-1} \cdot J^+\varphi_{m,\theta_nu}(\varphi_{n,u}(z))^{-1}\Bigr]\\
 &\leq \sum_{(A,B) \in \alpha \tm \beta}\Bigl[\sup_{x\in A}J^+\varphi_{n,u}(x)^{-1}\Bigr] \cdot \Bigl[\sup_{y\in B}J^+\varphi_{m,\theta_nu}(y)^{-1}\Bigr]\\
 &= \Bigl[\sum_{A\in\alpha}\sup_{x\in A}J^+\varphi_{n,u}(x)^{-1}\Bigr] \cdot \Bigl[\sum_{B\in\beta}\sup_{y\in B}J^+\varphi_{m,\theta_nu}(y)^{-1}\Bigr].%
\end{align*}                                                                               
Hence, if we take subcovers $\alpha$ and $\beta$ so that the corresponding sums are close to the infimum, we see that%
\begin{equation}\label{eq_subadd}
  w^{\ep,\AC}_{n + m}(u) \leq w^{\ep,\AC}_n(u) + w^{\ep,\AC(n)}_m(\theta_nu).%
\end{equation}
Here we use that $\alpha \vee \varphi_{n,u}^{-1}(\beta)$ is a subcover of $\AC^{n+m}$ for $Q_{\rmd}(n+m,u,\ep)$.%

For small $\delta>0$, we define%
\begin{equation*}
  w^{\ep,\delta}_n(u) := w^{\ep,\AC(u)}_n(u),%
\end{equation*}
where $\AC(u)$ is the unique sequence so that $\AC(u)_j$ consists of all $(\delta/2)$-balls in $M$ that have a nonempty intersection with $N_{\ep}(Q(\theta_ju))$. Observe that then $J^+\varphi_{n,u}(x)$ in \eqref{eq_defwcocycle} is defined.%

\begin{proposition}\label{prop_wprops}
The functions $w^{\ep,\delta}$ have the following properties:%
\begin{enumerate}
\item[(i)] For all $n,m\in\N$ and $u\in\UC$,%
\begin{equation*}
  w^{\ep,\delta}_{n+m}(u) \leq w^{\ep,\delta}_n(u) + w^{\ep,\delta}_m(\theta_nu),%
\end{equation*}
i.e., $w^{\ep,\delta}$ is a subadditive cocycle over $\theta_1:\UC\rightarrow\UC$.%
\item[(ii)] For all $n\in\N$ and $u\in\UC$ it holds that%
\begin{equation*}
  v^{\ep}_n(u) \leq n\zeta + \log C_{\delta} + w^{\ep,\delta/2}_n(u),%
\end{equation*}
where $C_{\delta}$ and $\zeta$ are the constants in the Bowen-Ruelle volume lemma.%
\item[(iii)] For every $\alpha>0$ there exists $\delta>0$ (independent of $\ep$) so that%
\begin{equation*}
  w^{\ep,\delta/2}_n(u) - n(\alpha + \zeta) - \log C_{\delta/4} \leq v_n^{\ep+\delta/4}(u)%
\end{equation*}
for all $u \in \UC$ and $n\in\N$.%
\item[(iv)] For every sufficiently small $\delta>0$ there exists $N\in\N$ such that for all $u\in\UC$ and $n>2N$,%
\begin{equation*}
  v_n^{\ep+\delta}(u) \leq C(\ep) + v_{n-2N}^{\ep}(\theta_Nu)%
\end{equation*}
with a constant $C(\ep) \in \R$.%
\end{enumerate}
\end{proposition}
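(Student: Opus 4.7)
My plan is to treat each of the four items in sequence, leveraging the preparatory computation \eqref{eq_subadd} and the Bowen--Ruelle volume lemma (Lemma \ref{lem_volume}) as the central tools.

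Part (i) is immediate: the inequality \eqref{eq_subadd} was derived for an arbitrary sequence $\AC$ of open covers, so substituting $\AC := \AC(u)$ and noting that the shifted sequence $\AC(u)(n)$ consists at each $j$ of the $\delta/2$-balls meeting $N_\ep(Q(\theta_{n+j}u)) = N_\ep(Q(\theta_j(\theta_n u)))$, and therefore coincides with $\AC(\theta_n u)$, yields $w^{\ep,\delta}_{n+m}(u) \leq w^{\ep,\delta}_n(u) + w^{\ep,\delta}_m(\theta_n u)$. For part (ii), I would take any finite subcover $\alpha$ of $\AC(u)^n$ for $Q_{\rmd}(n,u,\ep)$, discard elements disjoint from that compactum, and for each $A \in \alpha$ pick $x_A \in A \cap Q_{\rmd}(n,u,\ep)$. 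Since $A = \bigcap_j\varphi_{j,u}^{-1}(A_j)$ with each $A_j$ a $\delta/4$-ball in $M$, any $y \in A$ satisfies $d(\varphi_{j,u}(y),\varphi_{j,u}(x_A)) \leq \delta/2$ for $0 \leq j < n$, so $A \subset B^{n,u}_\delta(x_A)$; the upper bound of Lemma \ref{lem_volume} then yields $\vol(A) \leq C_\delta \rme^{n\zeta}\sup_{y\in A}J^+\varphi_{n,u}(y)^{-1}$, and summing and passing to the infimum over $\alpha$ produces (ii).

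Part (iii) is the reverse estimate and relies on bounded distortion for the unstable Jacobian. Using continuity of the cocycle $\log J^+\varphi_{n,u}$ on a neighborhood of $L(Q)$ together with compactness, for every $\alpha>0$ one can select $\delta>0$ \emph{independent of $\ep$} such that
\[
  \sup_{y \in B^{n,u}_\delta(x)} J^+\varphi_{n,u}(y)^{-1} \leq \rme^{n\alpha}J^+\varphi_{n,u}(x)^{-1}
\]
whenever $x \in Q_{\rmd}(n,u,\ep)$. I would then take a maximal $(n,u,\delta/4)$-separated set $E \subset Q_{\rmd}(n,u,\ep)$; the balls $A_j(x) := B_{\delta/4}(\varphi_{j,u}(x))$ lie in $\AC(u)_j$ and the sets $C(x) := \bigcap_j \varphi_{j,u}^{-1}(A_j(x))$ form, by maximality of $E$, a subcover of $Q_{\rmd}(n,u,\ep)$ drawn from $\AC(u)^n$ with $C(x) \subset B^{n,u}_\delta(x)$. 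Bounded distortion yields $w^{\ep,\delta/2}_n(u) \leq n\alpha + \log \sum_{x\in E}J^+\varphi_{n,u}(x)^{-1}$. On the other hand, the smaller Bowen balls $B^{n,u}_{\delta/8}(x)$ for $x\in E$ are pairwise disjoint (by the separation of $E$) and each lies in $Q_{\rmd}(n,u,\ep+\delta/4)$, so the lower bound of Lemma \ref{lem_volume} (with a scale factor absorbed into $C_{\delta/4}$) furnishes $\sum_{x\in E}J^+\varphi_{n,u}(x)^{-1} \leq C_{\delta/4}\rme^{n\zeta}\vol(Q_{\rmd}(n,u,\ep+\delta/4))$. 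Combining the two estimates gives (iii).

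Part (iv) is the main obstacle and is where isolation (P4) and partial hyperbolicity (P2) enter essentially. A purely shift-theoretic computation yields $\varphi_{N,u}(Q_{\rmd}(n,u,\ep+\delta)) \subset Q_{\rmd}(n-2N,\theta_N u,\ep+\delta)$ for any $N \in \N$ and $n>2N$, and the change-of-variables formula together with a uniform bound on $|\det\rmD\varphi_{N,u}|$ over a compact neighborhood of $Q$ bounds the resulting volume ratio. The delicate point is to replace $\ep+\delta$ by $\ep$ on the right-hand side: I would pick $\delta$ small enough that $N_{\ep+\delta}(Q)$ is contained in the isolating neighborhood from (P4), and then invoke (P2) to argue that any trajectory persisting in $N_{\ep+\delta}(Q)$ over a window of length $N$ is, at its midpoint, trapped within distance $\ep$ of $Q(\theta_j u)$ via the center-stable/unstable structure afforded by the splitting $E^+\oplus E^{0-}$. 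The constant $C(\ep)$ then arises from the Jacobian bound over the compact neighborhood, with its dependence on $\ep$ coming through the choice of $N$. This shadowing-type tightening step, in which (P2) and (P4) combine nontrivially, is in my view the main technical hurdle of the proposition.
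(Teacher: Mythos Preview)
Your treatment of (i)--(iii) is correct and close to the paper's. For (ii) you take a somewhat different route: rather than passing through a maximal $(n,u,\delta)$-separated set $E$ (the paper observes that each element of $\AC^n$ contains at most one point of $E$, and then applies the volume lemma to the spanning Bowen balls centered at $E$), you estimate $\vol(A)$ directly for each cover element $A$ via $A\subset B^{n,u}_\delta(x_A)$ and the upper bound in Lemma~\ref{lem_volume}. Both arguments are valid; yours is arguably more direct. Your (iii) matches the paper's up to harmless factors of two in the separation scales.

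The genuine gap is in (iv), and it stems from a misconception rather than a real technical obstacle. The tightening from $\ep+\delta$ to $\ep$ does \emph{not} use the partial hyperbolicity (P2); it follows purely from isolation (P4), compactness, and the continuity of $u\mapsto Q(u)$ (Lemma~\ref{lem_fiber_usc}) by a short contradiction argument. Namely, if the implication
\[
  \max_{-N<j<N}\dist(\varphi(j,x,u),Q(\theta_ju))\leq\ep+\delta\ \Longrightarrow\ \dist(x,Q(u))<\ep
\]
failed for every $N$, one extracts convergent subsequences $(u_N,x_N)\to(u,x)$; continuity of $\varphi(j,\cdot,\cdot)$, $\theta$ and $Q(\cdot)$ passes the $(\ep+\delta)$-bound to all $j\in\Z$, so (by uniform continuity of $\varphi$ on compact time intervals) $\varphi(\R,x,u)$ lies in an arbitrarily small neighborhood of $Q$, whence $(u,x)\in L(Q)$ by (P4), contradicting $\dist(x,Q(u))\geq\ep$. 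Once this implication is in hand, your Jacobian bound for $\varphi_{N,u}$ over a compact neighborhood of $Q$ delivers $C(\ep)$ immediately. In other words, (iv) is the \emph{easiest} part of the proposition, not the hardest; invoking the splitting $E^+\oplus E^{0-}$ here would only create difficulties, since the continuously extended bundles off $L(Q)$ are not invariant and no honest shadowing argument is available there.
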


\begin{proof}
(i) This is immediately clear from \eqref{eq_subadd}.%

(ii) Let $\AC := \AC(u)$ and let $E \subset Q_{\rmd}(n,u,\ep)$ be a maximal $(n,u,\delta)$-separated set. Then each member of $\AC^n$ contains at most one element of $E$. Indeed, if there were two such elements $x_1$ and $x_2$, then%
\begin{equation*}
  d(\varphi(j,x_1,u),\varphi(j,x_2,u)) < \delta \mbox{\quad for\ } j = 0,1,\ldots,n-1,%
\end{equation*}
a contradiction. Hence, for every finite subcover $\alpha$ of $\AC^n$,%
\begin{equation*}
  \sum_{x\in E}J^+\varphi_{n,u}(x)^{-1} \leq \sum_{A\in\alpha}\sup_{x\in A}J^+\varphi_{n,u}(x)^{-1}.%
\end{equation*}
Since every maximal $(n,u,\delta)$-separated set is also $(n,u,\delta)$-spanning, Lemma \ref{lem_volume} implies%
\begin{equation*}
  v^{\ep}_n(u) = \log\vol(Q_{\rmd}(n,u,\ep)) \leq \log\sum_{x\in E} C_{\delta} \rme^{n\zeta} J^+\varphi_{n,u}(x)^{-1} \leq n\zeta + \log C_{\delta} + w^{\ep,\AC}_n(u).%
\end{equation*}

(iii) For the given number $\alpha>0$ choose $\delta>0$ small enough so that%
\begin{equation}\label{eq_alpha_est}
  \frac{J^+\varphi_{1,u}(x_1)}{J^+\varphi_{1,u}(x_2)} \leq \rme^{\alpha}%
\end{equation}
for all $x_1,x_2$ in a compact neighborhood of $Q$ satisfying $d(x_1,x_2)\leq\delta$ and all $u\in\UC$. This is possible by the uniform continuity of $(u,x) \mapsto J^+\varphi_{1,u}(x)$ on compact sets.%

Let $\AC := \AC(u)$ and consider a finite $(n,u,\delta/2)$-spanning set $F$ for $Q_{\rmd}(n,u,\ep)$, contained in $Q_{\rmd}(n,u,\ep)$. For each $z\in F$ consider sets $A_j(z) \in \AC_j$ so that $B_{\delta/2}(\varphi(j,z,u)) = A_j(z)$ for $j=0,1,\ldots,n-1$. Let%
\begin{equation*}
  C(z) := \bigcap_{j=0}^{n-1}\varphi_{j,u}^{-1}(A_j(z)) \in \AC^n.%
\end{equation*}
The definition of $C(z)$ together with \eqref{eq_alpha_est} implies%
\begin{equation*}
  \sup_{x \in C(z)}J^+\varphi_{n,u}(x)^{-1} \leq \rme^{n\alpha} \cdot J^+\varphi_{n,u}(z)^{-1}.%
\end{equation*}
Since the sets $C(z)$, $z \in F$, form a finite subcover of $\AC^n$ for $Q_{\rmd}(n,u,\ep)$,%
\begin{equation*}
  w^{\ep,\AC}_n(u) \leq n\alpha + \log\sum_{z \in F}J^+\varphi_{n,u}(z)^{-1}.%
\end{equation*}
Since a maximal $(n,u,\delta/2)$-separated set is also $(n,u,\delta/2)$-spanning and the corresponding Bowen-balls of radius $\delta/4$ are disjoint and contained in $Q_{\rmd}(n,u,\ep + \delta/4)$, Lemma \ref{lem_volume} implies%
\begin{equation*}
  w^{\ep,\AC}_n(u) \leq n(\alpha + \zeta) + \log C_{\delta/4} + v_n^{\ep+\delta/4}(u).%
\end{equation*}

(iv) We claim that for sufficiently small $\delta>0$ the following holds: for every sufficiently small $\ep>0$ there exists $N\in\N$ so that for all $u \in \UC$ and $x \in M$ it holds that%
\begin{equation}\label{eq_claimiv}
  \max_{-N<j<N}\dist(\varphi(j,x,u),Q(\theta_ju)) \leq \ep + \delta \quad \Rightarrow \quad \dist(x,Q(u)) < \ep.%
\end{equation}
To prove this claim, suppose to the contrary that there exists $\ep>0$ so that for every $N\in\N$ there are $u_N\in\UC$ and $x_N\in M$ with%
\begin{equation*}
  \dist(\varphi(j,x_N,u_N),Q(\theta_ju_N)) \leq \ep + \delta \mbox{ for } -N<j<N \mbox{\quad and\quad} \dist(x_N,Q(u_N)) \geq \ep.%
\end{equation*}
By compactness of $\UC$ we may assume $u_N \rightarrow u \in \UC$ and by compactness of small closed neighborhoods of $Q$, we may assume $x_N \rightarrow x\in M$. 

For arbitrary $j\in\Z$ we have $\dist(\varphi(j,x_N,u_N),Q(\theta_j u_N)) \leq \ep + \delta$ whenever $N > |j|$. Since $\varphi(j,\cdot,\cdot)$, $Q(\cdot)$ and $\dist(\cdot,\cdot)$ are continuous functions, this implies $\dist(\varphi(j,x,u),Q(\theta_ju)) \leq \ep + \delta$ for all $j\in\Z$. For the same reason, $\dist(x,Q(u)) \geq \ep$. By continuity of $\varphi$ it follows that $\varphi(\R,x,u)$ is contained in some $\tilde{\ep}$-neighborhood of $Q$, where $\tilde{\ep}$ tends to zero as $\ep + \delta$ tends to zero. Hence, if $\ep$ and $\delta$ are small enough so that $N_{\tilde{\ep}}(Q)$ is contained in an isolating neighborhood of $Q$, then Property (P4) implies $(u,x)\in L(Q)$, in contradiction to $\dist(x,Q(u)) \geq \ep$.%

Now choose $N$ according to \eqref{eq_claimiv} for given $\ep,\delta>0$ and pick $x \in Q_{\rmd}(n,u,\ep+\delta)$ for some $n>2N$. We want to show that%
\begin{equation*}
  \varphi_{N,u}(x) \in Q_{\rmd}(n-2N,\theta_Nu,\ep),%
\end{equation*}
which is equivalent to $\dist(\varphi(N+i,x,u),Q(\theta_{N+i}u)) \leq \ep$ for $0 \leq i < n-2N$. To show this, let $x_i := \varphi(N+i,x,u)$ for $0 \leq i < n-2N$ and observe that%
\begin{equation*}
  \dist(\varphi(j,x_i,\theta_{N+i}u),Q(\theta_j \theta_{N+i}u)) = \dist(\varphi(N+i+j,x,u),Q(\theta_{N+i+j}u)) \leq \ep + \delta%
\end{equation*}
for $-N < j < N$, since $0 < N+i+j < N + (n-2N) + (N-1) = n - 1$. Hence, $\dist(x_i,Q(\theta_{N+i}u)) < \ep$ for $0 \leq i < n-2N$, implying $\varphi_{N,u}(x) \in Q_{\rmd}(n-2N,\theta_Nu,\ep)$. It follows that $\varphi_{N,u}(Q_{\rmd}(n,u,\ep+\delta)) \subset Q_{\rmd}(n-2N,\theta_Nu,\ep)$, so%
\begin{equation*}
  v_n^{\ep+\delta}(u) \leq C + v_{n-2N}^{\ep}(\theta_Nu)%
\end{equation*}
for the constant $C := \max_{(u,x)}\log|\det\rmD\varphi_{N,u}^{-1}(x)|$, the maximum taken over $\UC \tm N_{\ep}(Q)$.%
\end{proof}

\begin{remark}
Note that it is totally unclear if $w^{\ep,\delta}$ is continuous or even measurable. Observe that the properties of $v_n^{\ep}$ are similar to those of an asymptotically subadditive cocycle, as defined and studied in \cite{FHu}.%
\end{remark}

Observe that Lemma \ref{lem_subadd} in the Appendix can be applied to $v := w^{\ep,\delta}:\N \tm \UC \rightarrow \R$, $(n,u) \mapsto w^{\ep,\delta}_n(u)$, and $f = \theta_1$. (To obtain a subadditive cocycle on $\Z_+ \tm X$, one might define $v_0(u) :\equiv 0$.) Indeed, by the preceding proposition, for all $n\in\N$, choosing $\delta = \delta(\alpha)$ sufficiently small, we have%
\begin{align}\label{eq_wub}
  \frac{1}{n}w^{\ep,\delta}_n(u) \leq \frac{1}{n}v_n^{\ep+\delta/2}(u) + \alpha + \zeta + \frac{1}{n}\log C_{\delta/2} \leq \frac{1}{n}(C_{\delta/2}+\vol(N_{\ep+\delta/2}(Q))) + \alpha + \zeta < \infty.%
\end{align}
Moreover, the definition of $w^{\ep,\delta}$ implies that for an appropriately chosen compact neighborhood $V$ of $Q$,%
\begin{align*}
  \frac{1}{n}w^{\ep,\delta}_n(u) &\geq \frac{1}{n}\inf\Bigl\{ \log|\alpha| + \log \min_{\varphi_{j,u}(x) \in V,\atop 0\leq j < n}J^+\varphi_{n,u}(x)^{-1} : \alpha \mbox{ \ldots } \Bigr\}\\
	&= \frac{1}{n}\log\min_{\varphi_{j,u}(x) \in V,\atop 0\leq j < n}J^+\varphi_{n,u}(x)^{-1} + \frac{1}{n}\inf\left\{ \log|\alpha| : \alpha \mbox{ \ldots } \right\}\\
	&\geq -\log \max_{(x,u') \in V \tm \UC}J^+\varphi_{1,u'}(x) > - \infty.%
\end{align*}

The next proposition shows that the $\limsup$ and the infimum in \eqref{eq_ielb} can indeed be interchanged if we additionally let $\ep$ become arbitrarily small.%

\begin{proposition}\label{prop_limsupliminf_lb}
The invariance entropy satisfies%
\begin{equation*}
  h_{\inv}(K,Q) \geq -\lim_{\ep\downarrow0}\sup_{u\in\UC}\liminf_{n\rightarrow\infty}\frac{1}{n}\log\vol\left(Q_{\rmd}(n,u,\ep)\right).%
\end{equation*}
\end{proposition}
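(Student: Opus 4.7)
The corollary preceding the proposition provides, for every $\ep>0$,
\[
h_{\inv}(K,Q) \geq \limsup_{n\to\infty}\inf_{u\in\UC}\Bigl(-\tfrac{1}{n}v_n^{\ep}(u)\Bigr),
\]
whereas the target inequality is equivalent, by monotonicity of $\ep\mapsto\sup_{u}\liminf_{n}\tfrac{1}{n}v_n^{\ep}(u)$, to showing that for every sufficiently small $\ep>0$,
\[
h_{\inv}(K,Q) \geq -\sup_{u\in\UC}\liminf_{n\to\infty}\tfrac{1}{n}v_n^{\ep}(u).
\]
The only automatic comparison is $\limsup_n\inf_u \leq \inf_u\limsup_n$, which goes the wrong way. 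So the task is to genuinely interchange the order of $\inf_u$ and $\limsup_n$, up to an $\ep$-enlargement. The plan is to transfer this interchange onto the subadditive cocycle $w^{\ep,\delta}$ introduced in \eqref{eq_defwcocycle}, which sandwiches $v^{\ep}$ via Proposition \ref{prop_wprops}(ii),(iii).

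First, apply Proposition \ref{prop_wprops}(ii) uniformly in $u$, take $\liminf_n$ and negate:
\[
\limsup_{n\to\infty}\inf_{u}\Bigl(-\tfrac{1}{n}v_n^{\ep}(u)\Bigr) \;\geq\; -\zeta - \liminf_{n\to\infty}\sup_{u}\tfrac{1}{n}w_n^{\ep,\delta/2}(u).
\]
By Fekete's lemma applied pointwise to the subadditive sequence $n\mapsto w_n^{\ep,\delta/2}(u)$, the limit $L^{\ep,\delta/2}(u):=\lim_{n\to\infty}\tfrac{1}{n}w_n^{\ep,\delta/2}(u)=\inf_n\tfrac{1}{n}w_n^{\ep,\delta/2}(u)$ exists, and the two-sided uniform bounds established in \eqref{eq_wub} and the display immediately following it are in force.

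The decisive step is to invoke Lemma \ref{lem_subadd} from the Appendix, applied to $w^{\ep,\delta/2}$ as a subadditive cocycle over the continuous shift $\theta_1:\UC\to\UC$, in order to bound
\[
\liminf_{n\to\infty}\sup_{u}\tfrac{1}{n}w_n^{\ep,\delta/2}(u) \;\leq\; \sup_{u}L^{\ep,\delta/2}(u).
\]
I expect this to be the main obstacle, because $w^{\ep,\delta}$ is not known to be measurable let alone continuous, so the classical subadditive ergodic-optimization results (see \cite{Mor}) do not apply directly; Lemma \ref{lem_subadd} must compensate by exploiting the continuity of the sandwiching functions $u\mapsto v_n^{\ep'}(u)$ supplied by Lemma \ref{lem_contlem} together with the uniform upper and lower bounds recorded above.

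Finally, translate back from $w$ to $v$ using Proposition \ref{prop_wprops}(iii): for any $\alpha>0$ and the correspondingly chosen $\delta=\delta(\alpha)$,
\[
L^{\ep,\delta/2}(u) \;\leq\; \liminf_{n\to\infty}\tfrac{1}{n}v_n^{\ep+\delta/4}(u) + (\alpha+\zeta).
\]
Chaining the three inequalities yields
\[
h_{\inv}(K,Q) \;\geq\; -\alpha - 2\zeta - \sup_{u}\liminf_{n\to\infty}\tfrac{1}{n}v_n^{\ep+\delta(\alpha)/4}(u).
\]
Sending $\alpha\downarrow 0$ drives $\delta(\alpha)\downarrow 0$ and, by the Bowen--Ruelle volume lemma tuned afterwards, $\zeta\downarrow 0$. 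Combined with the monotonicity of $\ep'\mapsto\sup_{u}\liminf_{n}\tfrac{1}{n}v_n^{\ep'}(u)$ and the identity $\lim_{\ep'\downarrow\ep}=\inf_{\ep'>\ep}$, one obtains the bound for every $\ep>0$, and a final $\ep\downarrow 0$ delivers the proposition.
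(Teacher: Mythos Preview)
Your overall strategy---sandwiching $v^{\ep}$ by the subadditive cocycle $w^{\ep,\delta}$ via Proposition \ref{prop_wprops}(ii),(iii) and invoking Lemma \ref{lem_subadd} for the interchange---is the paper's strategy. However, two steps do not go through as written.

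First, Fekete's lemma does not apply pointwise: the cocycle inequality reads $w_{n+m}(u)\le w_n(u)+w_m(\theta_n u)$, not $w_{n+m}(u)\le w_n(u)+w_m(u)$. Hence neither the existence of $L^{\ep,\delta/2}(u)=\lim_n\tfrac1nw_n^{\ep,\delta/2}(u)$ nor its identification with $\inf_n\tfrac1nw_n^{\ep,\delta/2}(u)$ is available for a general $u\in\UC$.

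Second, and more seriously, your ``decisive step'' $\liminf_n\sup_u\tfrac1nw_n\le\sup_u\liminf_n\tfrac1nw_n$ cannot be established while staying in $w$-language: since $w$ is not continuous (not even known to be measurable), a compactness limit $\tilde u$ carries no information about $w_n(\tilde u)$. The paper's mechanism is different in a crucial way. Starting from a near-maximizing pair $(u_k,n_k)$ for $\tfrac1nw_n^{\ep,\delta}$, Lemma \ref{lem_subadd} produces a \emph{shifted} point $\tilde u_k=\theta_{n_k^*}u_k$ along which $\tfrac1lw_l^{\ep,\delta}(\tilde u_k)$ stays uniformly large for every $l$ up to some $\tilde n_k\to\infty$; Proposition \ref{prop_wprops}(iii) is then applied \emph{before} any limit is taken, transferring this to $\tfrac1lv_l^{\ep+\delta/2}(\tilde u_k)$; only now does one pass to a subsequential limit $\tilde u_k\to\tilde u$ and use the continuity of each fixed $v_n^{\ep+\delta/2}$ (Lemma \ref{lem_contlem}) to deduce that $\liminf_n\tfrac1nv_n^{\ep+\delta/2}(\tilde u)$ is large. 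The order matters: the translation (iii) from $w$ to $v$ must precede the compactness/continuity step, not follow it as in your plan.

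A minor point: the paper closes via Proposition \ref{prop_wprops}(iv), which yields $\sup_u\liminf_n\tfrac1nv_n^{\ep+\delta/2}(u)\le\sup_u\liminf_n\tfrac1nv_n^{\ep}(u)$ directly, rather than by sending $\delta(\alpha)\downarrow0$. Your alternative may still succeed after the outer $\ep\downarrow0$, but the $\zeta$-bookkeeping (which depends on $\ep$ and the Bowen-ball scale in Lemma \ref{lem_volume}, not on $\alpha$) would need more care than ``tuned afterwards'' suggests.
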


\begin{proof}
As a consequence of Proposition \ref{prop_general_lb}, for sufficiently small $\ep>0$, we have%
\begin{equation}\label{eq_former_lb}
  h_{\inv}(K,Q) \geq -\liminf_{n\rightarrow\infty}\sup_{u\in\UC}\frac{1}{n}v_n^{\ep}(u).%
\end{equation}
We define%
\begin{equation*}
  S := \sup\Bigl\{\lambda\in\R\ :\ \exists u_k \in \UC,\ n_k \rightarrow \infty \mbox{ with } \lambda = \lim_{k\rightarrow\infty}\frac{1}{n_k}v^{\ep}_{n_k}(u_k) \Bigr\}.%
\end{equation*}
This number is finite and independent of $\ep$ (as long as $\ep$ is sufficently small). Finiteness follows from \eqref{eq_wub} and the inequality in Proposition \ref{prop_wprops}(ii). By Proposition \ref{prop_wprops}(iv), there exists $N = N(\delta)\in\N$ so that $v_n^{\ep}(u) \leq v_n^{\ep+\delta}(u) \leq C + v_{n-2N}^{\ep}(\theta_Nu)$ for all $u\in\UC$ and $n>2N$. This easily implies the independence of $S$ on $\ep$.%

Fix $\alpha>0$ and choose $\delta = \delta(\alpha)>0$ according to Proposition \ref{prop_wprops}(iii). Now consider a sequence $\rho_k \downarrow 0$ and sequences $u_k \in \UC$, $n_k \rightarrow \infty$ so that%
\begin{equation*}
  \frac{1}{n_k}w^{\ep,\delta}_{n_k}(u_k) > S - \zeta - \rho_k \mbox{\quad for all\ } k \geq 0,%
\end{equation*}
which is possible by Proposition \ref{prop_wprops}(ii). We put $\tilde{\rho}_k := 1/\sqrt{n_k}$. By Lemma \ref{lem_subadd}, we find times $n_k^*<n_k$ so that%
\begin{equation*}
  \frac{1}{l}w^{\ep,\delta}_l(\theta_{n_k^*}u_k) > S - \zeta - \rho_k - \tilde{\rho}_k \mbox{\quad for \ } 0 < l \leq n_k - n_k^*,%
\end{equation*}
where $n_k - n_k^* \geq \sqrt{n_k}/(2\omega)$ and $\omega$ is defined as in \eqref{eq_def_omega}. By Proposition \ref{prop_wprops}(iii), this implies%
\begin{equation*}
  \frac{1}{l}v^{\ep + \delta/2}_l(\theta_{n_k^*}u_k) > S - (2\zeta+\alpha) - \rho_k - \tilde{\rho}_k - \frac{1}{l}\log C_{\delta/2}.%
\end{equation*}
We put%
\begin{equation*}
  \tilde{u}_k := \theta_{n_k^*}u_k,\quad \tilde{n}_k := n_k - n_k^* \rightarrow \infty.%
\end{equation*}
By compactness, we may assume that $\tilde{u}_k$ converges to some $\tilde{u} \in \UC$. Fix $n\in\N$ and $\rho>0$. Then, for $k$ large enough, $n \leq \tilde{n}_k$ and, by continuity of $v^{\ep+\delta/2}_n$,%
\begin{equation*}
  \bigl|v_n^{\ep+\delta/2}(\tilde{u}) - v_n^{\ep+\delta/2}(\tilde{u}_k)\bigr| < \rho.%
\end{equation*}
We obtain%
\begin{align*}
  \frac{1}{n}v_n^{\ep+\delta/2}(\tilde{u}) &= \frac{1}{n}v_n^{\ep+\delta/2}(\tilde{u}_k) + \Bigl(\frac{1}{n}v_n^{\ep+\delta/2}(\tilde{u}) - \frac{1}{n}v^{\ep+\delta/2}_n(\tilde{u}_k)\Bigr)\\ &> S - (2\zeta+\alpha) - \rho_k - \tilde{\rho}_k - \frac{1}{n}\log C_{\delta/2} - \frac{\rho}{n}.%
\end{align*}
This implies%
\begin{equation*}
  \liminf_{n\rightarrow\infty}\frac{1}{n}v^{\ep+\delta/2}_n(\tilde{u}) \geq S - (2\zeta + \alpha).%
\end{equation*}
Now choose for each $n>0$ some $u_n^* \in \UC$ with $\sup_{u\in\UC}v_n^{\ep}(u)/n = v_n^{\ep}(u_n^*)/n$, which is possible, since $v^{\ep}_n$ is continuous. Then%
\begin{align*}
  \liminf_{n\rightarrow\infty}\sup_{u\in\UC}\frac{1}{n}v_n^{\ep}(u) &= \liminf_{n\rightarrow\infty}\frac{1}{n}v_n^{\ep}(u_n^*)\allowdisplaybreaks\\
	&\leq S \leq 2\zeta + \alpha + \liminf_{n\rightarrow\infty}\frac{1}{n}v^{\ep+\delta/2}_n(\tilde{u})\\ 
	&\leq 2\zeta + \alpha + \sup_{u\in\UC}\liminf_{n\rightarrow\infty}\frac{1}{n}v^{\ep+\delta/2}_n(u)\\
	&\leq 2\zeta + \alpha + \sup_{u\in\UC}\liminf_{n\rightarrow\infty}\frac{1}{n}\left(C + v^{\ep}_{n-2N}(\theta_Nu)\right)\\
	&= 2\zeta + \alpha + \sup_{u\in\UC}\liminf_{n\rightarrow\infty}\frac{1}{n}v^{\ep}_n(u).%
\end{align*}
Since $\alpha$ was chosen arbitrarily, this shows that%
\begin{equation*}
  \liminf_{n\rightarrow\infty}\sup_{u\in\UC}\frac{1}{n}v_n^{\ep}(u) \leq 2\zeta + \sup_{u\in\UC}\liminf_{n\rightarrow\infty}\frac{1}{n}v^{\ep}_n(u).%
\end{equation*}
Now, as $\ep$ tends to zero, also $\zeta$ can be chosen arbitrarily small. Combining this with \eqref{eq_former_lb}, the proof is complete.%
\end{proof}

\subsection{Relation to random escape rates and proof of the main result}\label{subsec_rer}

In the following, we use the notation $\MC_T$ for the set of all $T$-invariant Borel probability measures, where $T:X\rightarrow X$ is a continuous map on a compact metric space.%

\begin{proposition}\label{prop_supmeas}
The invariance entropy satisfies%
\begin{equation*}
  h_{\inv}(K,Q) \geq -\lim_{\ep\downarrow0}\sup_{P\in\MC_{\theta_1}}\liminf_{n\rightarrow\infty}\frac{1}{n}\int \log\vol\left(Q_{\rmd}(n,u,\ep)\right)\rmd P(u).%
\end{equation*}
\end{proposition}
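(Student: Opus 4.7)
The plan is to upgrade Proposition \ref{prop_limsupliminf_lb} by replacing the pointwise supremum $\sup_u \liminf_n \frac{1}{n} v_n^\ep(u)$ by an integrated supremum of the form $\sup_P \liminf_n \frac{1}{n} \int v_n^\ep \, \rmd P$, at the cost of errors that can be made arbitrarily small by shrinking $\ep$. The natural device is the Krylov--Bogolyubov empirical-measure construction: pick a near-optimal $\tilde u \in \UC$ together with a sequence $n_k \to \infty$ along which $\frac{1}{n_k} v_{n_k}^\ep(\tilde u)$ converges to a value close to $\sup_u \liminf_n \frac{1}{n} v_n^\ep(u)$, form the Birkhoff empirical measures $P_{n_k} := \frac{1}{n_k} \sum_{j=0}^{n_k-1} \delta_{\theta_j \tilde u}$, and extract a weak-$*$ limit $P$ on the compact space $\UC$; continuity of $\theta_1$ ensures that $P \in \MC_{\theta_1}$.

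The key technical step is to convert the pointwise quantity $\frac{1}{n_k} v_{n_k}^\ep(\tilde u)$ into a Birkhoff average along the orbit of $\tilde u$, against which $P_{n_k} \rightharpoonup P$ can be usefully applied. The function $v^\ep$ is continuous but not subadditive, while its approximant $w^{\ep,\delta}$ is subadditive but not known to be continuous (as flagged in the remark after Proposition \ref{prop_wprops}), so I combine the two via a sandwich. Given $\alpha > 0$, fix $\delta = \delta(\alpha)$ from Proposition \ref{prop_wprops}(iii). Proposition \ref{prop_wprops}(ii) yields
\begin{equation*}
  \tfrac{1}{n_k} v_{n_k}^\ep(\tilde u) \leq \zeta + o(1) + \tfrac{1}{n_k} w_{n_k}^{\ep, \delta/2}(\tilde u),
\end{equation*}
and for any fixed $l \in \N$, iterating subadditivity of $w^{\ep,\delta/2}$ from each of the $l$ starting offsets and averaging delivers
\begin{equation*}
  \tfrac{1}{n_k} w_{n_k}^{\ep,\delta/2}(\tilde u) \leq \tfrac{1}{l n_k}\sum_{j=0}^{n_k - l} w_l^{\ep,\delta/2}(\theta_j \tilde u) + O(l/n_k).
\end{equation*}
Applying Proposition \ref{prop_wprops}(iii) pointwise to each summand, $w_l^{\ep,\delta/2}(\theta_j \tilde u) \leq l(\alpha + \zeta) + \log C_{\delta/4} + v_l^{\ep + \delta/4}(\theta_j \tilde u)$, replaces the non-continuous $w_l$ by the continuous $v_l^{\ep+\delta/4}$ (Lemma \ref{lem_contlem}), so the resulting Birkhoff average against $P_{n_k}$ admits the weak-$*$ limit $\int v_l^{\ep+\delta/4}\,\rmd P$.

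Passing $k \to \infty$ and then $l \to \infty$ in the composite inequality gives
\begin{equation*}
  \liminf_n \tfrac{1}{n} v_n^\ep(\tilde u) \leq 2\zeta + \alpha + \sup_{P \in \MC_{\theta_1}} \liminf_l \tfrac{1}{l}\int v_l^{\ep+\delta/4}\,\rmd P.
\end{equation*}
Taking the supremum over $\tilde u$, inserting the resulting bound into Proposition \ref{prop_limsupliminf_lb}, sending $\ep \downarrow 0$ (so $\zeta \downarrow 0$ by Lemma \ref{lem_volume}) followed by $\alpha \downarrow 0$ (forcing $\delta \downarrow 0$), and using monotonicity of $\ep \mapsto v_l^\ep$ to replace $v_l^{\ep + \delta/4}$ by $v_l^\ep$ inside the outer limit, then yields the asserted inequality.

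The main obstacle is precisely the absence of continuity for $w^{\ep,\delta}$: weak-$*$ convergence of $P_{n_k}$ does not on its own allow passage to the limit in $\int w_l^{\ep,\delta/2}\,\rmd P_{n_k}$. The rescue is the two-sided comparison between $w_l$ and $v_l$ provided by Proposition \ref{prop_wprops}(ii)--(iii), which at the cost of an additive error of order $\alpha$ converts $w_l$ into the continuous surrogate $v_l^{\ep+\delta/4}$ before the limit is taken; this error is later absorbed by sending $\alpha \downarrow 0$. A secondary point of care is the bookkeeping of the five parameters $\ep, \alpha, \delta, \zeta, l$, but since each contributes only an additive error and the dependencies $\delta = \delta(\alpha)$, $\zeta = \zeta(\ep)$ are monotone, the routing is routine.
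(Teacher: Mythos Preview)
Your argument is correct and follows essentially the same route as the paper's: build a $\theta_1$-invariant measure $P$ as a weak-$*$ limit of empirical measures along an orbit, pass from $v^{\ep}$ to the subadditive $w^{\ep,\delta/2}$ via Proposition~\ref{prop_wprops}(ii), decompose $w_{n_k}^{\ep,\delta/2}(\tilde u)$ into a Birkhoff average of $w_l^{\ep,\delta/2}$ by iterated subadditivity (the paper phrases this through Lemma~\ref{lem_combinatorics}), and then return to the continuous surrogate $v_l^{\ep+\delta/4}$ via Proposition~\ref{prop_wprops}(iii) so that the weak-$*$ limit can be taken. The one substantive difference is in the last clean-up step: the paper invokes Proposition~\ref{prop_wprops}(iv) to trade $v_m^{\ep+\delta/4}$ for $v_{m-2N}^{\ep}$ \emph{before} taking $m\to\infty$ and $\alpha\downarrow 0$, thereby landing directly on $\int v_m^{\ep}\,\rmd P$; you instead send $\ep\downarrow 0$ first and then $\alpha\downarrow 0$ (hence $\delta\downarrow 0$), relying on the monotonicity $\ep\mapsto v_l^{\ep}$ so that $F(\ep+\delta/4)\to \lim_{\ep'\downarrow 0}F(\ep')$. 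Both routes are valid; the paper's has the small advantage that $\zeta$ is disposed of only after both $\ep$ and $\delta$ are small (which is what Lemma~\ref{lem_volume} actually guarantees), whereas in your ordering you should be careful to note that $\zeta$ depends on \emph{both} $\ep$ and $\delta_0$, so the claim ``$\zeta\downarrow 0$ as $\ep\downarrow 0$'' really requires the subsequent $\delta\downarrow 0$ as well.
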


\begin{proof}
Pick an arbitrary $u\in\UC$ and let%
\begin{equation*}
  \beta := \liminf_{n\rightarrow\infty}\frac{1}{n}v_n^{\ep}(u).%
\end{equation*}
Consider the sequence of Borel probability measures defined by%
\begin{equation*}
  P_n := \frac{1}{n}\sum_{k=0}^{n-1}\delta_{\theta_ku}.%
\end{equation*}
Since $\UC$ is compact, there exists a weak$^*$-limit point $P$ of $(P_n)_{n\in\Z_+}$. With standard arguments, one shows that $P$ is $\theta_1$-invariant. We claim that%
\begin{equation}\label{eq_claim}
  \liminf_{m\rightarrow\infty}\frac{1}{m}\int v_m^{\ep} \rmd P \geq \beta,%
\end{equation}
which obviously proves the assertion. To prove \eqref{eq_claim}, fix $\alpha>0$ and choose $\delta = \delta(\alpha)$ according to Proposition \ref{prop_wprops}(iii). Then consider the following chain of identities and inequalities for an arbitrary but fixed $m\geq1$:%
\begin{align*}
  \beta &= \liminf_{n\rightarrow\infty}\frac{1}{n}v_n^{\ep}(u)\allowdisplaybreaks\\
	&\leq \zeta + \liminf_{n\rightarrow\infty}\frac{1}{n}w_n^{\ep,\delta/2}(u)\allowdisplaybreaks\\
	&\leq \zeta + \liminf_{n\rightarrow\infty}\frac{1}{nm}\sum_{i=0}^{n-m}w_m^{\ep,\delta/2}(\theta_iu)\allowdisplaybreaks\\
	&= \zeta + \liminf_{n\rightarrow\infty}\frac{1}{nm}\sum_{i=0}^{n-1}w_m^{\ep,\delta/2}(\theta_iu)\allowdisplaybreaks\\
	&\leq 2\zeta + \alpha + \frac{\log C_{\delta/4}}{m} + \liminf_{n\rightarrow\infty}\frac{1}{nm}\sum_{i=0}^{n-1} v_m^{\ep+\delta/4}(\theta_iu)\allowdisplaybreaks\\
	&= 2\zeta + \alpha + \frac{\log C_{\delta/4}}{m} + \liminf_{n\rightarrow\infty}\frac{1}{m} \int v_m^{\ep+\delta/4} \rmd P_n.%
\end{align*}
The second line follows from Proposition \ref{prop_wprops}(ii) and the fifth from statement (iii) of the same proposition. The fourth line uses that $w_m^{\ep,\delta/2}$ is bounded on $\UC$ (see \eqref{eq_wub}) and the last line just uses the definition of $P_n$. It remains to show the third line. To this end, for each $i$ in the range $0 \leq i < m$ let us choose integers $q_i,r_i$ such that $n = i + q_im + r_i$ with $q_i \geq 0$ and $0 \leq r_i < m$. We have (see Lemma \ref{lem_combinatorics})%
\begin{equation*}
  \sum_{i=0}^{m-1}\sum_{j=0}^{q_i-1}w_m^{\ep,\delta/2}(\theta_{i+jm}u) = \sum_{i=0}^{n-m}w_m^{\ep,\delta/2}(\theta_iu).%
\end{equation*}
Hence, using subadditivity (Proposition \ref{prop_wprops}(i)), we find%
\begin{align*}
  m w_n^{\ep,\delta/2}(u) &\leq \sum_{i=0}^{m-1}\Bigl(w_i^{\ep,\delta/2}(u) + \sum_{j=0}^{q_i-1}w_m^{\ep,\delta/2}(\theta_{i+jm}u) + w_{r_i}^{\ep,\delta/2}(\theta_{i+q_im}u)\Bigr)\\
	          &= \sum_{i=0}^{m-1}w_i^{\ep,\delta/2}(u) + \sum_{i=0}^{n-m}w_m^{\ep,\delta/2}(\theta_iu) + \sum_{i=0}^{m-1}w^{\ep,\delta/2}_{r_i}(\theta_{n-r_i}u).%
\end{align*}
Dividing both sides by $mn$ and taking the $\liminf$ for $n\rightarrow\infty$ completes the proof of the third line above. We have shown that for every $\alpha>0$ there exists $\delta>0$ such that for all $m\geq1$,%
\begin{equation*}
  \liminf_{n\rightarrow\infty}\frac{1}{m} \int v_m^{\ep+\delta/4} \rmd P_n \geq \beta - (2\zeta + \alpha) -  \frac{\log C_{\delta/4}}{m},%
\end{equation*}
implying (by continuity of $v_m^{\ep+\delta/4}$)%
\begin{equation*}
  \frac{1}{m} \int v_m^{\ep+\delta/4} \rmd P \geq \beta - (2\zeta + \alpha) - \frac{\log C_{\delta/4}}{m}.%
\end{equation*}
According to Proposition \ref{prop_wprops}(iv) choose $N\in\N$ so that $v^{\ep+\delta/4}_m(u) \leq C + v^{\ep}_{m-2N}(\theta_Nu)$ for $m>2N$, which yields%
\begin{equation*}
  \frac{C}{m} + \frac{1}{m}\int v_{m-2N}^{\ep} \rmd P \geq \beta - (2\zeta + \alpha) - \frac{\log C_{\delta/4}}{m},%
\end{equation*}
where we use that $P$ is $\theta_1$-invariant. Letting $m\rightarrow\infty$ and subsequently $\alpha\downarrow0$, with Proposition \ref{prop_limsupliminf_lb} we arrive at%
\begin{equation*}
  h_{\inv}(K,Q) \geq -\lim_{\ep\downarrow0}\Bigl(2\zeta + \sup_{P\in\MC_{\theta_1}} \liminf_{n\rightarrow\infty}\frac{1}{n}\int v_n^{\ep}\rmd P\Bigr).%
\end{equation*}
Since $\zeta$ can be chosen arbitrarily small as $\ep$ tends to zero, the proof is complete.%
\end{proof}

\begin{remark}
The main ideas in the above proof are taken from \cite[Lem.~A.6]{Mor}.%
\end{remark}

Observe that the numbers%
\begin{equation*}
  \liminf_{n\rightarrow\infty}\frac{1}{n}\int \log\vol\left(Q_{\rmd}(n,u,\ep)\right) \rmd P(u)%
\end{equation*}
are random escape rates (cf., e.g., \cite{Liu}) and can be related to metric entropy and Lyapunov exponents by standard techniques, which we will now do.%

In the following, we regard the discretized control flow $\Phi_n:\UC \tm M \rightarrow \UC \tm M$, $n\in\Z$, as a smooth random dynamical system over the base $(\UC,\BC(\UC),P,\theta_1)$, where $P$ is a $\theta_1$-invariant Borel probability measure on $\UC$, as explained in Section \ref{sec_framework}. We will use the convention to denote $\theta_1$-invariant measures on $\UC$ by $P$ and $\Phi_1$-invariant measures on $\UC \tm M$ by $\mu$.%

We can finally prove our main result (announced as Theorem \ref{thm_mainresult} in Section \ref{sec_framework}):%

\begin{theorem}
The invariance entropy satisfies%
\begin{equation*}
  h_{\inv}(K,Q) \geq \inf_{\mu}\left(\int \log J^+\varphi_{1,u}(x)\rmd\mu(u,x) - h_{\mu}(\varphi)\right),%
\end{equation*}
where the infimum is taken over all $\Phi_1$-invariant measures $\mu$ with $\supp\mu \subset L(Q)$.%
\end{theorem}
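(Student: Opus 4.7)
The plan is to take Proposition \ref{prop_supmeas} as starting point and, for each $\theta_1$-invariant probability measure $P$ on $\UC$ and each sufficiently small $\ep>0$, to bound the quantity
\begin{equation*}
  \beta(P,\ep) := \liminf_{n\rightarrow\infty}\frac{1}{n}\int \log\vol(Q_{\rmd}(n,u,\ep))\rmd P(u)
\end{equation*}
from above by $h_\mu(\varphi) - \int \log J^+\varphi_{1,u}(x)\rmd\mu$ for a specific $\Phi_1$-invariant measure $\mu$ with $\supp\mu\subset L(Q)$. Taking the supremum over $P$ and the limit $\ep\downarrow0$ in Proposition \ref{prop_supmeas} then yields the theorem.

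The measure $\mu$ arises from a Gibbs-like construction. For each $n\in\N$ and $u\in\UC$, let $E_n(u)\subset Q_{\rmd}(n,u,\ep)$ be a maximal $(n,u,\delta)$-separated set, which is automatically $(n,u,\delta)$-spanning. Lemma \ref{lem_volume} applied to the Bowen balls of radius $\delta$ around the points of $E_n(u)$ covers $Q_{\rmd}(n,u,\ep)$ and gives
\begin{equation*}
  \vol(Q_{\rmd}(n,u,\ep))\leq C_\delta\rme^{n\zeta}S_n(u),\qquad S_n(u):=\sum_{x\in E_n(u)}J^+\varphi_{n,u}(x)^{-1}.
\end{equation*}
I would then form the empirical measures
\begin{equation*}
  \nu_n := \int_\UC\sum_{x\in E_n(u)}\frac{J^+\varphi_{n,u}(x)^{-1}}{S_n(u)}\delta_{(u,x)}\rmd P(u),\qquad \mu_n:=\frac{1}{n}\sum_{k=0}^{n-1}(\Phi_k)_*\nu_n,
\end{equation*}
and extract a weak$^*$-limit $\mu=\lim_{j\rightarrow\infty}\mu_{n_j}$ along a subsequence realizing $\beta(P,\ep)$. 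A Krylov--Bogolyubov argument shows $\mu$ is $\Phi_1$-invariant, with marginal $P$ on $\UC$ by $\theta_1$-invariance of $P$. Since each $\nu_n$ is supported where $\dist(\varphi_{k,u}(x),Q(\theta_ku))\leq\ep$ for $0\leq k<n$, the support of $\mu$ lies in the closed $\ep$-tube $\{(u,x):\dist(x,Q(u))\leq\ep\}$; for $\ep$ small enough that this tube lies in an isolating neighborhood of $Q$, the $\Phi_1$-invariance of $\mu$ combined with property (P4) forces $\supp\mu\subset L(Q)$.

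To relate $\beta(P,\ep)$ to $\mu$, I would exploit the Gibbs-type identity: for a finite measurable partition $\xi$ of $M$ with $\diam\xi<\delta$ and $\mu_u$-null atom boundaries, distinct points of $E_n(u)$ lie in distinct atoms of $\xi_u^n:=\bigvee_{i=0}^{n-1}\varphi_{i,u}^{-1}\xi$, so
\begin{equation*}
  H_{\nu_{n,u}}(\xi_u^n)=\log S_n(u)+\int\log J^+\varphi_{n,u}\rmd\nu_{n,u}.
\end{equation*}
Integrating against $P$ and using the cocycle decomposition $\log J^+\varphi_{n,u}=\sum_{k=0}^{n-1}\log J^+\varphi_{1,\cdot}\circ\Phi_k$ gives
\begin{equation*}
  \frac{1}{n}\int\log S_n\rmd P=\frac{1}{n}\int H_{\nu_{n,u}}(\xi_u^n)\rmd P(u)-\int\log J^+\varphi_{1,u}(x)\rmd\mu_n(u,x).
\end{equation*}
The standard Misiurewicz block decomposition of $\{0,\ldots,n-1\}$ into $q$-blocks, combined with concavity of entropy, the disintegration identity $((\Phi_k)_*\nu_n)_{\theta_ku}=(\varphi_{k,u})_*\nu_{n,u}$, and $\theta_1$-invariance of $P$, yields
\begin{equation*}
  \frac{1}{n}\int H_{\nu_{n,u}}(\xi_u^n)\rmd P(u)\leq\frac{1}{q}\int H_{\mu_{n,u}}(\xi_u^q)\rmd P(u)+O\Bigl(\frac{q\log|\xi|}{n}\Bigr).
\end{equation*}
Passing $n=n_j\rightarrow\infty$ (via weak$^*$-continuity of $\mu\mapsto\int H_{\mu_u}(\xi_u^q)\rmd P(u)$ for partitions with $\mu$-null boundary), then $q\rightarrow\infty$, bounds the entropy term by $h_\mu(\varphi,\xi)\leq h_\mu(\varphi)$. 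Combined with the volume estimate, $\beta(P,\ep)\leq\zeta+h_\mu(\varphi)-\int\log J^+\varphi_{1,u}\rmd\mu$, and since $\mu$ is admissible for the infimum, taking $\sup_P$ and $\ep\downarrow0$ (with $\zeta\rightarrow0$) completes the argument.

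The main obstacle is executing the Misiurewicz step cleanly in the bundle RDS setting: the partition $\xi_u^q$ depends measurably on $u$, the sample measures $\mu_{n,u}$ are only $P$-a.e.\ defined, and establishing the weak$^*$-continuity of the integrated sample-entropy functional requires partitions whose atom boundaries are simultaneously $\mu_u$-null for $P$-almost every $u$. All of this is handled by the proof of the variational principle for bundle random dynamical systems in \cite{Bog,ZCa}, whose machinery I would adapt. A secondary technicality is coordinating the limits $q\rightarrow\infty$, $n\rightarrow\infty$, $\ep\downarrow0$, and $\zeta\rightarrow0$ so that the various error terms vanish in the right order.
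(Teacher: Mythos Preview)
Your proposal is correct and follows essentially the same route as the paper: starting from Proposition~\ref{prop_supmeas}, using maximal $(n,u,\delta)$-separated sets together with the volume lemma, building the Gibbs-weighted empirical measures, passing to a $\Phi_1$-invariant limit (the paper packages this via Crauel's lemma, Lemma~\ref{lem_crauel}), invoking the Misiurewicz-type argument from \cite{Bog} for the entropy bound, and using isolation (P4) to force $\supp\mu\subset L(Q)$. The technicalities you flag---measurable selection of $E_n(u)$, $\mu_u$-null partition boundaries, and the weak$^*$-continuity of the integrated sample entropy---are exactly the ones the paper defers to the machinery of \cite{Bog,ZCa}.
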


\begin{proof}
Fix some $P\in\MC_{\theta_1}$ and a sufficiently small $\ep>0$. For every $n\in\N$ and $u\in\UC$ let $E_n(u) \subset Q_{\rmd}(n,u,\ep)$ be a maximal $(n,u,\delta)$-separated set for some sufficiently small $\delta>0$. Then $E_n(u)$ also $(n,u,\ep)$-spans $Q_{\rmd}(n,u,\ep)$ and hence, by the volume lemma,%
\begin{equation}\label{eq_vnep_ineq}
  v_n^{\ep}(u) \leq \log \sum_{x \in E_n(u)}\vol(B^{n,u}_{\delta}(x)) \leq \log C_{\delta} + \log\sum_{x \in E_n(u)}\rme^{n\zeta -\log J^+\varphi_{n,u}(x)}.%
\end{equation}
We define sequences of probability measures on $M$ by%
\begin{equation*}
  \eta_n^u := \frac{\sum_{x \in E_n(u)} \rme^{n\zeta-\log J^+\varphi_{n,u}(x)} \delta_x}{\sum_{x\in E_n(u)} \rme^{n\zeta-\log J^+\varphi_{n,u}(x)}},\quad n\in\N,\ u\in\UC,%
\end{equation*}
and%
\begin{equation*}
  \nu_n^u := \frac{1}{n}\sum_{i=0}^{n-1}\varphi(-i,\cdot,u)^{-1}_*\eta_n^{\theta_{-i}u}.%
\end{equation*}
To obtain an invariant measure for the discretized control flow on $\UC \tm M$ with marginal $P$ on $\UC$, we apply Lemma \ref{lem_crauel} (see Appendix) with $\Phi_1$ in place of $\Theta$ and random measures $\sigma_n$ given by $\rmd\sigma_n = \rmd \eta_n^u \rmd P(u)$. With the same arguments as used in the proof of the variational principle for pressure of random dynamical systems (cf.~\cite{Bog,ZCa}) one can choose the sets $E_n(u)$ such that $\eta_n^u$ depends measurably on $u$, i.e., such that $\sigma_n$ is a random measure for each $n$. Then%
\begin{equation*}
  \frac{1}{n}\sum_{i=0}^{n-1}(\Phi_i)_*\sigma_n = \frac{1}{n}\sum_{i=0}^{n-1}\sigma_n \circ \Phi_i^{-1},%
\end{equation*}
and hence (using that $\varphi(i,\cdot,\theta_{-i}v) = \varphi(-i,\cdot,v)^{-1}$),%
\begin{align*}
  \frac{1}{n}\sum_{i=0}^{n-1}(\Phi_i)_*\sigma_n(A) &= \frac{1}{n}\sum_{i=0}^{n-1} \sigma_n(\Phi_i^{-1}(A))\allowdisplaybreaks\\
	&= \frac{1}{n}\sum_{i=0}^{n-1} \int_{\UC} \int_M \unit_{\Phi_i^{-1}(A)}(u,x) \rmd \eta_n^u(x) \rmd P(u)\allowdisplaybreaks\\
	&= \frac{1}{n}\sum_{i=0}^{n-1} \int_{\UC} \int_M \unit_A(\theta_i u,\varphi(i,x,u)) \rmd \eta_n^u(x) \rmd P(u)\allowdisplaybreaks\\
	&= \frac{1}{n}\sum_{i=0}^{n-1} \int_{\UC} \int_M \unit_A(v,\varphi(i,x,\theta_{-i}v)) \rmd \eta_n^{\theta_{-i}v}(x) \rmd P(v)\allowdisplaybreaks\\
	&= \frac{1}{n}\sum_{i=0}^{n-1} \int_{\UC} \int_M \unit_A(v,y) \rmd\left[ \varphi(i,\cdot,\theta_{-i}v)_* \eta_n^{\theta_{-i}v} \right](y) \rmd P(v)\allowdisplaybreaks\\
	&= \int_{\UC} \int_M \unit_A(v,y) \rmd \nu_n^v(y) \rmd P(v).%
\end{align*}
Consequently, the measures $\nu_n^u$, $u\in\UC$, are the sample measures of $(1/n)\sum_{i=0}^{n-1}(\Phi_i)_*\sigma_n$, and Lemma \ref{lem_crauel} shows that any limit point of this sequence is a $\Phi_1$-invariant measure with marginal $P$. We fix such a limit point $\mu$ that is obtained from a subsequence $(n_j)$.%

We can choose a finite measurable partition $\PC = \{P_1,\ldots,P_k\}$ of $M$ with $\diam\PC < \delta$ and $\mathrm{Pr}_M \mu(\partial P_i) = 0$ for $1 \leq i \leq k$ (see, e.g., \cite{KHa}). Since $\mathrm{Pr}_M\mu(\partial P_i) = \int \mu_u(\partial P_i)\rmd P(u)$, we have%
\begin{equation*}
  \mu_u(\partial P_i) = 0 \quad P\mbox{-a.s.}%
\end{equation*}
Put $\alpha_n(u,x) := n\zeta - \log J^+\varphi_{n,u}(x)$ and $S:=\sum_{x\in E_n(u)} \rme^{\alpha_n(u,x)}$. Since each element of $\bigvee_{i=0}^{n-1}\varphi(i,\cdot,u)^{-1}\PC$ contains at most one element of $E_n(u)$, we obtain for $P$-almost every $u\in\UC$ that%
\begin{equation*}
  H_{\eta_n^u}\left(\bigvee_{i=0}^{n-1}\varphi(i,\cdot,u)^{-1}\PC\right) - \int (-\alpha_n(u,x))\rmd\eta_n^u(x) = -\sum_{x\in E_n(u)} \frac{\rme^{\alpha_n(u,x)}}{S} \log \frac{\rme^{\alpha_n(u,x)}}{S} = \log S.%
\end{equation*}
Using precisely the same arguments as in \cite[Thm.~6.1]{Bog}, with \eqref{eq_vnep_ineq} we can conclude that%
\begin{equation*}
  \frac{1}{n} \int \log\sum_{x\in E_n(u)}\rme^{\alpha_n(u,x)} \rmd P(u) \leq \frac{1}{n}\int H_{\mu_u}\left(\bigvee_{i=0}^{n-1}\varphi(i,\cdot,u)^{-1}\PC\right)\rmd P(u) + \int \alpha_1 \rmd \mu
\end{equation*}
for every $n\in\N$. Taking the $\liminf$ for $n\rightarrow\infty$ and using that $h_{\mu}(\varphi;\PC) \leq h_{\mu}(\varphi)$, we arrive at%
\begin{equation*}
  \liminf_{n\rightarrow\infty}\frac{1}{n}\int v_n^{\ep}(u) \rmd P(u) \leq h_{\mu}(\varphi) - \int \log J^+\varphi_{1,u}(x)\rmd\mu(u,x) + \zeta.%
\end{equation*}
Since $\zeta$ can be chosen arbitrarily small as $\ep$ tends to zero, we end up with the desired estimate for the invariance entropy. It only remains to show that $\supp \mu \subset L(Q)$. To show this, observe that%
\begin{equation*}
  \supp\mu_u \subset N_{\ep}(Q) \mbox{\quad for all\ } u \in \UC.%
\end{equation*}
Furthermore, we have the identities $\varphi(n,\cdot,u)_*\mu_u = \mu_{\theta_nu}$ for all $n\in\Z$ $P$-almost surely (cf.~\cite[Thm.~1.4.5]{Arn}), implying $\varphi(n,\supp\mu_u,u) = \supp\mu_{\theta_nu}$. Since $Q$ is isolated, this yields $\supp\mu_u \subset Q$ if $\ep$ is chosen sufficiently small, and hence $\supp\mu_u \subset Q(u)$. Consequently, $\mu(L(Q)) = \int \mu_u(Q(u))\rmd P(u) = 1$, implying $\supp\mu \subset L(Q)$.%
\end{proof}

\section{Examples}\label{sec_example}

Examples for the application of our result can be found among invariant systems on flag manifolds of noncompact semisimple Lie groups (i.e., systems induced by control-affine systems on the Lie group whose drift and control vector fields are all either right- or left-invariant). Since the treatment of such systems necessitates sophisticated tools from Lie theory, we do not include it in this paper. Instead, we only consider a special subclass of such systems that can be described without using Lie theory. Starting with a bilinear system%
\begin{equation}\label{eq_bcs}
  \dot{x}(t) = \Bigl(A_0 + \sum_{i=1}^m u_i(t)A_i\Bigr)x(t),\quad u\in\UC,%
\end{equation}
on $\R^{d+1}$, we can study the system induced on the $d$-dimensional real projective space $\R\P^d$. In the case when $\tr A_i = 0$ for $i=0,1,\ldots,m$, this corresponds to a right-invariant system on $\SL(d+1,\R)$, however, we can in fact do without this assumption.%

The control flow $\Phi_t:\UC \tm \R^{d+1} \rightarrow \UC \tm \R^{d+1}$, $t\in\R$, of \eqref{eq_bcs} can be regarded as a linear flow on the trivial vector bundle $\UC \tm \R^{d+1}$, hence it induces a flow on the projective bundle $\UC \tm \R\P^d$. It is easy to see that this induced flow, which we denote by $\Phi^{\P} = (\theta,\varphi^{\P})$, is the control flow of the control-affine system on $\R\P^d$ with vector fields $f_0,f_1,\ldots,f_m$, given by $f_i(\P x) = \rmD\P(x) A_i x$, where $\P:\R^{d+1}\backslash\{0\}\rightarrow \R\P^d$ denotes the canonical projection.%

Since the shift flow $\theta$ acting on the base $\UC$ is chain transitive (cf.~\cite[Prop.~4.1.1]{CKl}), we can apply Selgrade's theorem (cf.~\cite[Thm.~5.2.5]{CKl}) and obtain a decomposition%
\begin{equation*}
  \UC \tm \R^{d+1} = \WC^1 \oplus \cdots \oplus \WC^r%
\end{equation*}
into exponentially separated invariant subbundles, each of which corresponds to a chain recurrent component on $\UC \tm \R\P^d$, and hence a chain control set on $\R\P^d$. (The chain control sets are the projections of the chain recurrent components of the control flow to the state space, cf.~\cite{CKl}.) The chain recurrent component associated with $\WC^i$ is given by%
\begin{equation}\label{eq_prmorsecomp}
  L(Q_i) = \left\{ (u,\P x) \in \UC \tm \R\P^d\ :\ x \neq 0 \mbox{ and } (u,x) \in \WC^i \right\}.%
\end{equation}
The chain control sets $Q_1,\ldots,Q_r$ are the canonical projections of $L(Q_1),\ldots,L(Q_r)$ to $\R\P^d$. They satisfy assumption (P1) of our main result by definition. Now we fix $i \in \{1,\ldots,r\}$ and put%
\begin{equation*}
  L(Q) := L(Q_i),\quad \VC^0 := \WC^i,\quad \VC^+ := \WC^1 \oplus \cdots \oplus \WC^{i-1},\quad \VC^- := \WC^{i+1} \oplus \cdots \oplus \WC^r,%
\end{equation*}
where we assume that the exponentially separated subbundles $\WC^i$ are ordered by increasing growth rates. Then, for each $(u,\P x) \in L(Q)$, we put%
\begin{equation*}
  E^0(u,\P x) := \rmD \P(x)\WC^0(u),\quad E^{\pm}(u,\P x) := \rmD \P(x)\WC^{\pm}(u).%
\end{equation*}
By \cite[Prop.~7.8 and Prop.~7.9]{Kaw}, we have the following result.%

\begin{proposition}
The following assertions hold:%
\begin{enumerate}
\item[(i)] $E^0(u,\P x)$ and $E^{\pm}(u,\P x)$ are well-defined linear subspaces of $T_{\P x}\R\P^d$. Their dimensions are constant on $L(Q)$ with%
\begin{equation*}
  \dim E^0(u,\P x) = \rk \VC^0 - 1,\quad \dim E^{\pm}(u,\P x) = \rk \VC^{\pm}.%
\end{equation*}
\item[(ii)] For each $(u,\P x) \in L(Q)$, we have a direct sum decomposition%
\begin{equation*}
  T_{\P x}\R\P^d = E^-(u,\P x) \oplus E^0(u,\P x) \oplus E^+(u,\P x).%
\end{equation*}
\item[(iii)] The spaces $E^0(u,\P x)$ and $E^{\pm}(u,\P x)$ are the fibers of subbundles $E^0 \rightarrow L(Q)$ and $E^{\pm} \rightarrow L(Q)$, and satisfy%
\begin{equation*}
  \rmD\varphi^{\P}_{t,u}(\P x)E^i(u,\P x) = E^i(\Phi_t^{\P}(u,\P x)),\quad t \in \R,\ i \in \{-,0,+\}.%
\end{equation*}
\item[(iv)] The restriction of the derivative $\rmD\varphi^{\P}_{(\cdot,\cdot)}$ to $E^-$ is uniformly contracting and the corresponding restriction to $E^+$ is uniformly expanding. In particular, if $\rk\VC^0 = 1$, then $Q$ is uniformly hyperbolic without center bundle.%
\end{enumerate}
\end{proposition}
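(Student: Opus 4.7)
The plan is to work in the tangent-space identification $T_{\P x}\R\P^d \cong \R^{d+1}/\R x$, realized by $\rmD\P(x)$ whose kernel is precisely $\R x$. Throughout, fix $(u,\P x) \in L(Q)$, so that by \eqref{eq_prmorsecomp} one has $x \in \VC^0(u)\setminus\{0\}$ and in particular $\R x \subset \VC^0(u)$. The Selgrade direct sum $\R^{d+1} = \VC^-(u)\oplus\VC^0(u)\oplus\VC^+(u)$ then forces $\VC^{\pm}(u) \cap \R x = 0$.

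With this setup, (i) is immediate: $\rmD\P(x)$ restricted to $\VC^{\pm}(u)$ is injective, giving $\dim E^{\pm}(u,\P x) = \rk\VC^{\pm}$, while its restriction to $\VC^0(u)$ has kernel exactly $\R x$, giving $\dim E^0(u,\P x) = \rk\VC^0 - 1$. Well-definedness as a subspace of $T_{\P x}\R\P^d$ (independence of the representative $x$) follows from the linearity of each $\VC^i(u)$. For (ii), passing the Selgrade decomposition to the quotient by $\R x \subset \VC^0(u)$ produces the claimed splitting $T_{\P x}\R\P^d = E^-\oplus E^0 \oplus E^+$. Item (iii) then follows from the chain rule $\rmD\varphi^{\P}_{t,u}(\P x)\circ\rmD\P(x) = \rmD\P(\varphi(t,x,u))\circ \Phi^{\mathrm{lin}}_t(u)$ together with the invariance $\Phi^{\mathrm{lin}}_t(u)\VC^i(u) = \VC^i(\theta_tu)$, where $\Phi^{\mathrm{lin}}_t(u)$ denotes the linear cocycle of \eqref{eq_bcs}; continuity of the $E^i$ as subbundles is inherited from continuity of the Selgrade subbundles, which itself is a consequence of their exponential separation.

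Item (iv) is the main obstacle. The strategy is to equip $\R\P^d$ with the Fubini--Study metric and exploit the identity $|\rmD\P(x)v|_{\P} = |v|/|x|$ valid for $v \perp x$. Combined with the chain rule, this reduces the projective growth of $\rmD\varphi^{\P}_{t,u}(\P x)$ along any direction to the ratio
\begin{equation*}
  \frac{|\Phi^{\mathrm{lin}}_t(u) v|}{|\Phi^{\mathrm{lin}}_t(u) x|}
\end{equation*}
(after discarding the component along the orbit of $x$, which lies in the kernel of $\rmD\P$). The exponential separation between $\VC^+(u)$, $\VC^0(u)$ and $\VC^-(u)$ then provides constants $c,\lambda>0$ such that this ratio is bounded below by $c\rme^{\lambda t}|v|/|x|$ for $v \in \VC^+(u)$ and above by $c^{-1}\rme^{-\lambda t}|v|/|x|$ for $v \in \VC^-(u)$, uniformly in $(u,x)$ with $x$ normalized. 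Choosing representatives of $E^{\pm}(u,\P x)$ of bounded norm orthogonal to $x$ yields the asserted uniform expansion on $E^+$ and uniform contraction on $E^-$. In the special case $\rk\VC^0 = 1$, the center bundle $E^0$ has dimension zero and the splitting reduces to $T_{\P x}\R\P^d = E^-\oplus E^+$, so that $Q$ is uniformly hyperbolic without center bundle.
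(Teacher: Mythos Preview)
The paper does not actually prove this proposition; it simply cites \cite[Prop.~7.8 and Prop.~7.9]{Kaw}. Your approach is the standard one underlying those results, and parts (i)--(iii) are handled cleanly.

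There is, however, a small but genuine gap in your treatment of (iv). You invoke the Fubini--Study identity $|\rmD\P(x)v|_{\P} = |v|/|x|$, which requires $v\perp x$, and then apply the exponential-separation estimate, which requires $v\in\VC^{\pm}(u)$. In the last sentence you write ``choosing representatives of $E^{\pm}(u,\P x)$ of bounded norm orthogonal to $x$'', but such a representative is generally \emph{not} in $\VC^{\pm}(u)$ (the orthogonal complement of $x$ need not respect the Selgrade splitting), so the separation bound does not apply to it directly. Conversely, if you choose $v\in\VC^{\pm}(u)$, then $v$ need not be orthogonal to $x$, and the displayed ratio should read $|(\Phi^{\mathrm{lin}}_t(u)v)^{\perp}|/|\Phi^{\mathrm{lin}}_t(u)x|$ rather than $|\Phi^{\mathrm{lin}}_t(u)v|/|\Phi^{\mathrm{lin}}_t(u)x|$, where the orthogonal projection is taken with respect to $\Phi^{\mathrm{lin}}_t(u)x$.

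The missing ingredient is a uniform angle bound: because $\VC^{\pm}$ and $\VC^0$ are continuous complementary subbundles over the compact base $\UC$, the angle between $\VC^{\pm}(u)$ and $\VC^0(u)$ is bounded below by some $\theta_{\min}>0$ uniformly in $u$. This gives $|v^{\perp}| \geq (\sin\theta_{\min})|v|$ for every $v\in\VC^{\pm}(u)$ (projection orthogonal to $x\in\VC^0(u)$), and the same holds at time $t$ for $\Phi^{\mathrm{lin}}_t(u)v \in \VC^{\pm}(\theta_tu)$ relative to $\Phi^{\mathrm{lin}}_t(u)x \in \VC^0(\theta_tu)$. With this in hand one obtains $|\rmD\P(x)v| \asymp |v|/|x|$ and $|\rmD\P(\Phi^{\mathrm{lin}}_t(u)x)\Phi^{\mathrm{lin}}_t(u)v| \asymp |\Phi^{\mathrm{lin}}_t(u)v|/|\Phi^{\mathrm{lin}}_t(u)x|$ uniformly, and your ratio argument then goes through. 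State this explicitly and (iv) is complete.
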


Hence, we can easily describe a class of chain control sets on $\R\P^d$ which satisfy the partial hyperbolicity assumption (P2) of our main result with $E^{0-} := E^0 \oplus E^-$. These are precisely the chain control sets with subexponential growth rates on $E^0$. (A Lie-algebraic characterization in terms of the so-called flag type of the control flow will be presented in \cite{DK4}.)%

Now we show that each chain control $Q_i$ also satisfies the assumptions (P3) and (P4).%

\begin{proposition}\label{prop_ccs_properties}
The following statements hold for the chain control set $Q = Q_i$:%
\begin{enumerate}
\item[(i)] The set-valued mapping $u \mapsto Q(u)$ is lower semicontinuous.%
\item[(ii)] $Q$ is isolated, i.e., there exists a neighborhood $N \subset \R\P^d$ of $Q$ so that $\varphi^{\P}(\R,\P x,u) \subset N$ implies $\varphi^{\P}(\R,\P x,u) \subset Q$ for any $(u,\P x)\in\UC\tm\R\P^d$.%
\end{enumerate}
\end{proposition}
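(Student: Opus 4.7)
The plan is to exploit the fact that each fiber $Q(u)=Q_i(u)$ equals $\P\WC^i(u)$, the projectivization of the Selgrade subspace, together with the uniform exponential separation of the decomposition $\R^{d+1}=\WC^1(u)\oplus\cdots\oplus\WC^r(u)$.

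For (i), I would prove the stronger statement that $u\mapsto Q(u)$ is continuous in the Hausdorff metric. This reduces to showing that $u\mapsto\WC^i(u)$ is continuous as a map from $\UC$ into the Grassmannian $\mathrm{Gr}_k(\R^{d+1})$ with $k=\rk\WC^i$, because projectivization is continuous from the Grassmannian into the space of nonempty compact subsets of $\R\P^d$ equipped with the Hausdorff distance. The continuity of the Selgrade subbundles is a classical consequence of the uniform exponential separation built into Selgrade's theorem, via the same mechanism that is invoked in the excerpt for $E^+$ and $E^{0-}$ through \cite[Lem.~6.4]{Kaw}; I would appeal to this fact directly and conclude.

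For (ii), order the Selgrade components so that their exponential growth rates are strictly increasing with the index. Given $(u,\P x)\in\UC\tm\R\P^d$, write $x=\sum_j x^j$ with $x^j\in\WC^j(u)$ and set $j_-=\min\{j:x^j\neq 0\}$, $j_+=\max\{j:x^j\neq 0\}$. Uniform exponential separation, combined with a uniform positive lower bound on the angles between $\WC^j(u)$ and $\bigoplus_{k\neq j}\WC^k(u)$ (coming from the continuity established in (i) together with compactness of $\UC$), forces $\P\varphi(t,x,u)\to Q_{j_+}(\theta_t u)$ as $t\to+\infty$ and $\P\varphi(t,x,u)\to Q_{j_-}(\theta_t u)$ as $t\to-\infty$, with rates and constants uniform in $(u,\P x)$ as long as $\P x$ stays at positive distance from $Q_i(u)$. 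If $(u,\P x)\notin L(Q_i)$ then necessarily $j_-<i$ or $j_+>i$, so the projective trajectory is eventually pushed into an arbitrarily small neighborhood of some $Q_{j_\pm}$ with $j_\pm\neq i$. Since the chain control sets $Q_1,\ldots,Q_r$ are pairwise disjoint compact subsets of $\R\P^d$, picking $N$ to be a neighborhood of $Q_i$ whose closure is disjoint from $\bigcup_{j\neq i}Q_j$ yields the desired isolation.

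The main technical point will be the \emph{uniformity} of the convergence in (ii): one needs a single time $T=T(\ep)$, valid for all $u\in\UC$ and all $\P x$ at distance at least $\ep$ from $Q_i(u)$, after which $\dist(\P\varphi(\pm T,x,u),Q_{j_\pm}(\theta_{\pm T}u))<\ep/2$. This uniformity is not automatic from the fiberwise statement; it hinges on the uniformity of the exponential separation constants supplied by Selgrade's theorem \cite[Thm.~5.2.5]{CKl} and on the uniform angular lower bounds coming from the compactness of $\UC$ and part (i). Once these uniform estimates are collected, (ii) follows by contrapositive.
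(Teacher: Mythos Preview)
Your argument for (i) is essentially the paper's: both reduce to the continuity of the Selgrade subbundle $u\mapsto\WC^i(u)$, which follows from exponential separation, and then projectivize.

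For (ii) the approaches diverge. The paper gives a one-paragraph soft argument: the finitely many chain recurrent components $L(Q_1),\ldots,L(Q_r)$ form a Morse decomposition of the control flow on $\UC\tm\R\P^d$, so any $(u,\P x)$ not lying in a Morse set has its $\alpha$- and $\omega$-limit sets contained in two \emph{distinct} Morse sets. Projecting to $\R\P^d$ and choosing $N$ with $\cl N\cap Q_j=\emptyset$ for $j\neq i$ immediately forces both limit sets into $Q_i$, a contradiction. No growth-rate computation and no uniformity are needed.

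Your explicit route via the decomposition $x=\sum_j x^j$ is correct and in fact proves a little more (it identifies which $Q_j$ attracts the trajectory in each time direction). However, the uniformity you flag as the ``main technical point'' is a phantom difficulty. Isolation reads: \emph{there exists} a neighborhood $N$ such that \emph{for every} $(u,\P x)$, if the full trajectory stays in $N$ then $(u,\P x)\in L(Q_i)$. The neighborhood $N$ is fixed once---your choice, with $\cl N$ disjoint from $\bigcup_{j\neq i}Q_j$, is exactly right---and then the contrapositive is checked one trajectory at a time. For a fixed $(u,\P x)\notin L(Q_i)$ you have $j_+>i$ or $j_-<i$, and exponential separation along \emph{that particular orbit} yields a time $T$ (depending on $u$, $x$, and the ratios $|x^j|/|x^{j_\pm}|$) after which $\P\varphi(\pm T,x,u)$ lies in a small neighborhood of $Q_{j_\pm}\subset\R\P^d\setminus\cl N$. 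No uniform $T=T(\ep)$ is required; the quantifier structure simply does not demand it. You can therefore delete the last paragraph of your plan, and the argument becomes both correct and short---though still more computational than the paper's Morse-decomposition shortcut.
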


\begin{proof}
(i) First notice that the sets $Q(u)$ are all nonempty. In fact, we can write%
\begin{equation}\label{eq_qiudescr}
  Q(u) = \P\WC^i(u) = \left\{ \P x \in \R\P^d\ :\ x \in \WC^i(u) \backslash \{0\} \right\}.%
\end{equation}
Indeed, if $\varphi^{\P}(\R,\P x,u) \subset Q$, then $(u,\P x) \in L(Q)$ and \eqref{eq_prmorsecomp} implies $(u,x) \in \WC^i$, which is equivalent to $x \in \WC^i(u)$. Conversely, if $0 \neq x \in \WC^i(u)$, then $\varphi_{t,u}(x) \in \WC^i(\theta_tu)$ for all $t\in\R$ by invariance of $\WC^i$. This implies $\varphi^{\P}_{t,u}(\P x) = \P\varphi_{t,u}(x) \in \P\WC^i(\theta_tu) \subset Q_i$, using \eqref{eq_prmorsecomp} again. Now it suffices to prove that $u\mapsto Q(u)$ is inner semicontinuous, since this mapping is compact-valued with values in a compact metric space. Pick $u\in\UC$ and $\P x \in Q_i(u)$. We have to show that for any sequence $u_k \rightarrow u$ in $\UC$ there are $\P x_k \in Q_i(u_k)$ with $\P x_k \rightarrow \P x$. First note that $x \in \WC^i(u)$ by \eqref{eq_qiudescr}. Since $\WC^i$ is a vector bundle, we find $x_k \in \WC^i(u_k)$ with $x_k \rightarrow x$. Hence, $\P x_k \rightarrow \P x$ and $\P x_k \in Q_i(u_k)$.%

(ii) We use that the sets $L(Q_1),\ldots,L(Q_r)$ form a Morse decomposition for the control flow on $\UC \tm \R\P^d$ (since they are the chain recurrent components and $r<\infty$). We have $Q = Q_i = \pi_{\R\P^d}(L(Q_i))$. Now consider some $(u,\P x) \in \UC \tm \R\P^d$, not contained in a Morse set. Then the $\alpha$- and $\omega$-limit sets $\alpha(u,\P x)$ and $\omega(u,\P x)$ are contained in some $L(Q_{i_1})$ and $L(Q_{i_2})$ with $i_1 \neq i_2$, respectively. Hence, their projections to $\R\P^d$ are contained in the corresponding (disjoint) chain control sets $Q_{i_1}$ and $Q_{i_2}$. Consequently, if $\varphi^{\P}(\R,\P x,u)$ is contained in a neighborhood of $Q_i$ whose closure intersects no other chain control set, then $\varphi^{\P}(\R,\P x,u) \subset Q_i$.%
\end{proof}

Actually, for the systems on $\R\P^d$ our lower bound simplifies, since the entropy term $h_{\mu}(\varphi^{\P})$ always vanishes.%

\begin{theorem}
If $Q = Q_i$ is a chain control set on $\R\P^d$, which satisfies assumption (P2), then for any $K \subset Q$ of positive volume,%
\begin{equation}\label{eq_psp_lb}
  h_{\inv}(K,Q) \geq \inf_{\mu} \int \log J^+\varphi^{\P}_{1,u}(x) \rmd\mu(u,x) = \inf_{(u,x)\in L(Q)}\limsup_{\tau\rightarrow\infty}\frac{1}{\tau}\log J^+\varphi^{\P}_{\tau,u}(x).%
\end{equation}
\end{theorem}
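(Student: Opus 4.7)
The plan is to invoke Theorem~\ref{thm_mainresult} applied to $Q = Q_i$, whose hypotheses (P1), (P3), (P4) are supplied by Proposition~\ref{prop_ccs_properties}. This yields $h_{\inv}(K,Q) \geq \inf_{\mu}\bigl(\int \log J^+\varphi^{\P}_{1,u}(\P x)\rmd\mu(u,\P x) - h_{\mu}(\varphi^{\P})\bigr)$, so the inequality in~\eqref{eq_psp_lb} reduces to showing that $h_\mu(\varphi^{\P}) = 0$ for every $\Phi_1^{\P}$-invariant measure $\mu$ with $\supp\mu \subset L(Q)$. To establish the vanishing I would exploit the linear-algebraic structure of the fibers. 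By~\eqref{eq_qiudescr}, each $Q(u) = \P\WC^i(u)$ is a projective subspace of $\R\P^d$ of constant dimension, and $\varphi^{\P}_{t,u}$ restricts to a projective linear transformation $Q(u)\to Q(\theta_tu)$. Since $\supp\mu_u\subset Q(u)$ for $P$-almost every $u$, where $P$ denotes the marginal of $\mu$ on $\UC$, the fibered entropy defined in~\eqref{eq_rds_mesent} is insensitive to the partition structure outside $L(Q)$ and coincides with the metric entropy of the restricted bundle RDS $\{\varphi^{\P}_{1,u}|_{Q(u)}\}$ over $(\UC,\BC(\UC),P,\theta_1)$. The derivative cocycle of this restricted system takes values in $TQ(u) = E^0$, and Assumption (P2) forces every Lyapunov exponent of $\rmD\varphi^{\P}_1|_{E^0}$ to vanish (the upper bound is the weak contraction on $E^{0-}\supset E^0$; the lower bound follows because a strictly negative exponent on $E^0$ would, at a $\mu$-generic point, produce a direction on which $\rmD\varphi^{\P}$ is strictly contracting, forcing $\WC^i$ to split further against the definition of $E^0$). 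Applying Ruelle's inequality for $C^1$ bundle RDS (see e.g.\ \cite{Bog}) to the restricted system then gives $h_\mu(\varphi^{\P}) \leq \int \sum\lambda_j^+\rmd\mu = 0$.

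For the equality of the measure-theoretic infimum with the orbit-wise one, set $f(u,\P x) := \log J^+\varphi^{\P}_{1,u}(\P x)$, a continuous function on the compact set $L(Q)$ that generates an additive cocycle. Given $(u,\P x)\in L(Q)$, choose $n_k\to\infty$ along which $n_k^{-1}\log J^+\varphi^{\P}_{n_k,u}(\P x)$ approaches $\limsup_{\tau\to\infty}\tau^{-1}\log J^+\varphi^{\P}_{\tau,u}(\P x)$, and after extracting a further subsequence assume the empirical averages $n_k^{-1}\sum_{j=0}^{n_k-1}\delta_{\Phi_j^{\P}(u,\P x)}$ converge weakly$^*$ to a $\Phi_1^{\P}$-invariant $\mu$ supported in $L(Q)$. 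Continuity of $f$ together with additivity of the cocycle yields $\int f\rmd\mu = \limsup_\tau \tau^{-1}\log J^+\varphi^{\P}_{\tau,u}(\P x)$, which proves $\inf_\mu\int f\rmd\mu \leq \inf_{(u,\P x)}\limsup_\tau \tau^{-1}\log J^+\varphi^{\P}_{\tau,u}(\P x)$. The reverse inequality is Birkhoff plus ergodic decomposition: for any ergodic invariant $\mu$ and $\mu$-a.e.\ $(u,\P x)$, the orbit average of $f$ converges to $\int f\rmd\mu$, so the orbital $\limsup$ at such a point equals $\int f\rmd\mu$. The passage from discrete $n$ to continuous $\tau$ causes no difficulty because $\log J^+\varphi^{\P}_\tau - \log J^+\varphi^{\P}_{\lfloor\tau\rfloor}$ is uniformly bounded on $L(Q)$.

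The main obstacle is the vanishing of $h_\mu(\varphi^{\P})$. A direct application of Ruelle's inequality to the unrestricted RDS on $\R\P^d$ would only produce $h_\mu \leq \int\log J^+\rmd\mu$, which is far too weak to eliminate the entropy term. The decisive step is therefore the reduction to the restricted RDS with fibers $\{Q(u)\}_{u\in\UC}$, so that the relevant derivative cocycle becomes the center cocycle on $E^0$ with all Lyapunov exponents zero. This reduction rests on the fact that the fibers are smooth projective subspaces carried diffeomorphically by the flow, but it must be executed carefully within the bundle RDS framework in order to legitimately identify $h_\mu(\varphi^{\P})$ with the fibered entropy of the restricted system to which Ruelle's inequality with only the tangential exponents applies.
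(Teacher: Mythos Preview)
Your proposal is correct in outline but takes a genuinely different route from the paper for the key step $h_\mu(\varphi^{\P})=0$. The paper does not invoke Ruelle's inequality at all. Instead it bounds $h_\mu(\varphi^{\P})$ by the topological entropy of the bundle RDS on $L(Q)$ via the variational principle, and then shows $h_{\tp}(\varphi^{\P})=0$ by a direct metric estimate: equipping $\R\P^d$ with the standard round metric, each fiber $Q(u)=\P\WC^i(u)$ is a totally geodesic submanifold, so a shortest geodesic between two points of $Q(u)$ stays in $Q(u)$ and has tangent vectors in $E^0$; then (P2) gives $d(\varphi^{\P}_{n,u}(\P x),\varphi^{\P}_{n,u}(\P y))\le \rme^{\ep n}d(\P x,\P y)$ for every $\ep>0$, whence $h_{\tp}\le \ep\dim Q(u)$ and so $h_{\tp}=0$. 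Your approach, by contrast, passes to the restricted RDS on $\{Q(u)\}$ and applies a fibered Ruelle inequality using that the tangential Lyapunov exponents on $E^0$ are $\le 0$. Both arguments rest on the same underlying fact (subexponential growth on $E^0=TQ(u)$), but the paper's route is more elementary and sidesteps the bookkeeping needed to set up Ruelle's inequality for a bundle RDS whose fiber manifold varies with $u$.

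Two remarks on your version. First, the lower bound you argue for---that no Lyapunov exponent on $E^0$ is strictly negative---is not needed: Ruelle's inequality only involves $\sum\lambda_j^+$, and (P2) already forces every exponent on $E^0\subset E^{0-}$ to be $\le 0$, so $\sum\lambda_j^+=0$ without any appeal to a further splitting of $\WC^i$. (Your justification of that lower bound is also shaky: a negative Lyapunov exponent at $\mu$-typical points does not by itself produce a uniform exponentially separated splitting, so it need not contradict the irreducibility of $\WC^i$ in Selgrade's decomposition.) Second, for the equality in \eqref{eq_psp_lb} the paper simply cites the general theory of continuous additive cocycles \cite{SM3}; your explicit empirical-measure/Birkhoff argument is a correct and self-contained substitute.
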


\begin{proof}
To show the inequality in \eqref{eq_psp_lb}, it suffices to prove that $h_{\mu}(\varphi^{\P}) = 0$ for all $\mu$. To this end, observe that $h_{\mu}(\varphi^{\P})$ is bounded from above by the topological entropy $h_{\tp}(\varphi^{\P})$ of the corresponding bundle RDS on $L(Q)$, following from the variational principle for bundle RDS (cf.~\cite[Thm.~1.2.13]{HKa}). We show that $h_{\tp}(\varphi^{\P}) = 0$ for each fixed invariant measure $P$ on the base space $\UC$. To this end, equip $\R\P^d$ with the standard metric induced by the round metric on the unit sphere in $\R^{d+1}$. Consider two points $\P x,\P y$ on the same fiber $Q(u)$, $u\in\UC$. The fibers are projections of linear subspaces by \eqref{eq_qiudescr}, hence they are totally geodesic submanifolds. Now take a shortest geodesic $\gamma:[0,1] \rightarrow Q(u)$ from $\P x$ to $\P y$ and let $n\in\N$. Then%
\begin{align*}
  d(\varphi_{n,u}^{\P}(\P x),\varphi_{n,u}^{\P}(\P y)) &\leq \length(\varphi_{n,u}^{\P} \circ \gamma) = \int_0^1 |\rmD\varphi_{n,u}^{\P}(\gamma(t))\dot{\gamma}(t)| \rmd t \\
	 &\leq \sup_{\P z \in Q(u),\ w\in T_{\P z}Q(u),\ |w| = 1} |\rmD\varphi_{n,u}^{\P}(\P z)w| \cdot \length(\gamma).%
\end{align*}
Now observe that $\length(\gamma) = d(\P x,\P y)$ and $T_{\P z}Q(u) = E^0(u,\P z)$. By assumption (P2) we can thus choose $n$ large enough (independently of $u,\P z$ and $w$) so that $|\rmD\varphi_{n,u}^{\P}(\P z)w| \leq \rme^{\ep n}|w|$. Hence,%
\begin{equation*}
  d(\varphi_{n,u}^{\P}(\P x),\varphi_{n,u}^{\P}(\P y)) \leq \rme^{\ep n} d(\P x,\P y).%
\end{equation*}
By standard methods, one shows that this implies $h_{\tp}(\varphi^{\P}) \leq \dim Q(u) \cdot \ep$, and since $\ep>0$ was chosen arbitrarily, $h_{\tp}(\varphi^{\P}) = 0$ follows. The equality in \eqref{eq_psp_lb} follows from the general theory of continuous additive cocycles, see, e.g., \cite{SM3}.%
\end{proof}

According to \cite{DK1,DK2}, the lower bound in the above theorem coincides with the actual value of $h_{\inv}(K,Q)$ in the case when $Q$ is a uniformly hyperbolic (without center bundle) chain control set and the Lie algebra rank condition holds on $\inner Q$. In the more general case, when (P2) is satisfied, it is very likely that we also have equality in \eqref{eq_psp_lb}, since the results of \cite{DK1} (in the case when $Q$ is the closure of a control set) already imply%
\begin{equation*}
  h_{\inv}(K,Q) \leq \inf_{(u,x)}\limsup_{\tau\rightarrow\infty}\frac{1}{\tau}\log J^+\varphi^{\P}_{\tau,u}(x),%
\end{equation*}
the infimum taken over all $(u,x) \in L(Q)$ so that $u \in \inner_{L^{\infty}}\UC$ and $\varphi(\R_+,x,u)$ contained in a compact subset of $\inner Q$, cf.~\cite[Thm.~3.5]{DK1}.%

\section{Remarks, interpretation and further directions}\label{sec_interpret}%

\paragraph{Relation to submanifold stabilization.} A problem, different from set-invariance, that seems to be closely connected to the analysis in this paper is the stabilization of a control system to an embedded submanifold of the state space. Traditionally, in control theory only the stabilization to equilibrium points or periodic solutions is studied. However, there are important applications, including synchronization, path following and pattern generation, where the desired control objective is actually the stabilization to a submanifold invariant under some constant control input. The works \cite{EMa,Ma1,Ma2,MAl} studied feedback stabilization to submanifolds. So far, it seems that stabilization to submanifolds has not been studied in a networked control setup, where the controller only has quantized state information available. This first necessitates a precise definition of the stabilizability to a submanifold via open-loop controls, which can be formulated as follows.%

\begin{definition}
A control-affine system $\Sigma$ is \emph{open-loop stabilizable} to an embedded submanifold $S$ of its state space $M$ if for every neighborhood $N \subset M$ of $S$ there is a neighborhood $N' \subset N$ so that the following holds: For every $x\in N'$ there is $u\in\UC$ with $\varphi(\R_+,x,u) \subset N$ and $\dist(\varphi(t,x,u),S) \rightarrow 0$ as $t\rightarrow\infty$.%
\end{definition}

Observing that the fibers $Q(u)$ of the chain control set can be submanifolds (which is the case in the examples of the preceding section), the proof of our main result suggests that the following conjecture holds.%

\begin{conjecture}
Consider a control-affine system $\Sigma$ with state space $M$ and assume the following:%
\begin{enumerate}
\item[(i)] There exists a compact embedded submanifold $S \subset M$, invariant under a constant control function with value $u_*\in\inner U$. (We let $(\phi_t)_{t\in\R}$ denote the flow associated with $u_*$.)%
\item[(ii)] $\Sigma$ is open-loop stabilizable to $S$.%
\item[(iii)] There exists a $\rmD\phi_t$-invariant decomposition%
\begin{equation*}
  T_xM = E^{0-}(x) \oplus E^+(x),\quad \forall x\in S.%
\end{equation*}
\item[(iv)] There exists a constant $\lambda>0$ such that for every $\ep>0$ there is $T>0$ with the following property. For all $x\in S$ and $t\geq T$,%
\begin{align*}
  |\rmD\phi_t(x)v| \geq \rme^{\lambda t}|v| &\mbox{\quad if\ } v \in E^+(x),\\
  |\rmD\phi_t(x)v| \leq \rme^{\ep t}|v| &\mbox{\quad if\ } v \in E^{0-}(x).%
\end{align*}
\end{enumerate}
Then the smallest data rate above which stabilization to $S$ can be achieved by a controller receiving state information over a noiseless digital channel, satisfies%
\begin{equation*}
  R_0 \geq -P_{\tp}((\phi_1)_{|S};-\log J^+\phi_1(x)).%
\end{equation*}
\end{conjecture}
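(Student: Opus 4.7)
The strategy is to reduce quantized-measurement stabilization to a set-invariance problem on a shrinking family of neighborhoods of $S$, apply Theorem \ref{thm_mainresult} to each member of the family, and track the resulting lower bound as the neighborhood collapses onto $S$. To start, I would formalize $R_0$ via a coder-controller with finite alphabet in the spirit of the definitions used for invariance entropy. For every sufficiently small $\delta>0$, introduce the compact set
\begin{equation*}
  Q_\delta := \bigl\{ x \in N_\delta(S) : \exists\, u\in\UC,\ \varphi(\R,x,u) \subset N_\delta(S) \bigr\},
\end{equation*}
the maximal all-time controlled invariant subset of $N_\delta(S)$; it contains $S$ (with control $u_*$) and has positive volume. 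An approximation argument should then show that any coder-controller stabilizing $\Sigma$ to $S$ at rate $R$ can be turned into one rendering $Q_\delta$ invariant for initial conditions in $K:=Q_\delta$ at rate $R+o_\delta(1)$, so that $R_0 \geq h_{\inv}(K,Q_\delta) - o_\delta(1)$.

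Next, I would verify properties (P1)--(P4) for $Q_\delta$ with $\delta$ small. (P1) holds by construction. (P2) follows by extending the partial hyperbolic splitting on $S$ continuously to a neighborhood in $M$ via the robustness of dominated splittings, then using that pairs $(u,x)\in L(Q_\delta)$ must, in a time-averaged sense, track the unperturbed orbit of $u_*$ on $S$ so that the uniform expansion and weak contraction estimates from (iv) transfer. (P4) combines the isolation of $S$ as a $\phi$-invariant set (a consequence of the expansion on $E^+$) with the maximality in the definition of $Q_\delta$. The most delicate property is (P3), the lower semicontinuity of $u\mapsto Q_\delta(u)$; the natural route is a graph-transform argument producing, for $u$ weak-$*$ close to $u_*$, an invariant manifold $S_u$ close to $S$ under $\varphi(\cdot,\cdot,u)$.

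Once (P1)--(P4) are in place, Theorem \ref{thm_mainresult} yields
\begin{equation*}
  R_0 + o_\delta(1) \geq \inf_{\mu\in\MC_{\Phi_1}(L(Q_\delta))}\Bigl(\int \log J^+\varphi_{1,u}(x)\rmd\mu(u,x) - h_\mu(\varphi)\Bigr).
\end{equation*}
As $\delta\downarrow0$, any weak-$*$ accumulation point $\mu^*$ of near-minimizing measures $\mu_\delta$ is supported on $\{u_*\}\tm S$ by isolation, so $(\pr_{\UC})_*\mu^* = \delta_{u_*}$, the bundle entropy reduces to the classical entropy $h_{\mu^*_{u_*}}(\phi_1|_S)$, and the cocycle integrand reduces to $\log J^+\phi_1(x)$. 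Upper semicontinuity of $\mu\mapsto h_\mu(\varphi)$, which in the partially hyperbolic $C^2$ setting should hold in sufficient generality, together with the classical variational principle applied to $\phi_1|_S$, then gives $R_0\geq -P_{\tp}((\phi_1)_{|S};-\log J^+\phi_1(x))$.

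The main obstacle I anticipate is verifying (P3): controlling the fibers $Q_\delta(u)$ under $L^\infty$-small but possibly rapidly oscillating perturbations of the drift $u_*$ requires uniform persistence estimates for the partially hyperbolic invariant set $S$ in the weak-$*$ topology on $\UC$, which is subtle because $L^\infty$-balls around $u_*$ are not weak-$*$ small. A secondary difficulty is the reduction from stabilization data rate to invariance entropy of $(K,Q_\delta)$: the coder-controller must steer trajectories from an initial neighborhood of $S$ into the robust invariant set $Q_\delta$ in finite time, and the rate cost of this transient phase has to be shown to vanish as $\delta\to 0$, which requires an analysis of how asymptotic convergence can be quantified into a bounded-time capture phase plus an invariance phase.
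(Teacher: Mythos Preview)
The statement you are attempting to prove is presented in the paper as a \emph{conjecture}, not a theorem; the paper gives no proof. The authors only say that ``the proof of our main result suggests that the following conjecture holds,'' and leave it at that. So there is no proof in the paper to compare your proposal against.

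That said, your plan is a natural attempt to turn the heuristic into an argument, and you have correctly identified the places where it breaks down. Two points deserve emphasis. First, verifying (P2) on $L(Q_\delta)$ is more serious than you indicate: the splitting in the conjecture lives only over $S$ with the \emph{single} control $u_*$, whereas (P2) demands a uniform dominated splitting over \emph{all} pairs $(u,x)\in L(Q_\delta)$, including controls $u$ that are weak-$*$ far from $u_*$. Your remark that such pairs ``track the unperturbed orbit of $u_*$ on $S$ in a time-averaged sense'' does not obviously yield the pointwise hyperbolic estimates (P2) requires. Second, the paper itself, in the paragraph on lower semicontinuity in Section~\ref{sec_interpret}, explicitly says that (P3) ``seems to be quite strong'' and that even in the uniformly hyperbolic case shadowing only gives lower semicontinuity in the $L^\infty$-topology, not the weak-$*$ topology needed here. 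Your anticipated difficulty with (P3) is therefore exactly the obstacle that keeps the authors from upgrading the conjecture to a theorem. Finally, the quantity $R_0$ is not formally defined in the paper, so the very first step of your plan---formalizing the coder--controller reduction---already goes beyond what the paper provides.
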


\paragraph{The assumption of lower semicontinuity.} The assumption (P3) that the fiber map $u \mapsto Q(u)$ is lower semicontinuous seems to be quite strong. In general, we do not see any reason why a partially hyperbolic controlled invariant set or a chain control set should satisfy such an assumption. Even in the uniformly hyperbolic case (with one-dimensional center bundle), shadowing techniques can only show that $u \mapsto Q(u)$ is lower semicontinuous w.r.t.~the $L^{\infty}$-topology on $\UC$. The assumption of lower semicontinuity is needed though in the proof of Proposition \ref{prop_general_lb}, and to us it seems that there is no way around it. Fortunately, the chain control sets of the systems on $\R\P^d$ induced by bilinear systems, and more general, of invariant systems on flag manifolds of semisimple Lie groups, satisfy this assumption and we are still looking for a deeper reason or hidden mechanism which causes the lower semicontinuity for these systems.%

\paragraph{Topological pressure and SRB measures.} The lower bound obtained for $h_{\inv}(K,Q)$ is related to the topological pressure of random dynamical systems associated to the control flow. It would, of course, be desirable to gain a better understanding of this bound, which suggests that not only the instability of the dynamics on the set $Q$ (measured by $\int \log J^+\varphi_{1,u}\rmd \mu$) is relevant for the value of $h_{\inv}(Q)$. In fact, it is clear that in the extremal case, when $Q$ coincides with the state space $M$, the invariance entropy is zero, no matter how unstable the dynamics on $M$ is.%
 
For a fixed invariant measure $P$ on the base space $\UC$, and a fixed measure $\mu$ on $\UC \tm M$, invariant under $\Phi_1$, and projecting to $P$, observe that%
\begin{equation*}
  h_{\mu}(\varphi) - \int_{L(Q)} \log J^+\varphi_{1,u}(x) \rmd\mu(u,x) = 0%
\end{equation*}
if and only if $\mu$ is an SRB measure for the random dynamical system arising by equipping $\UC$ with $P$ and discretizing in time, cf.~\cite[Thm.~3.2.4]{HKa}. Since, for any measure $\mu$ the inequality%
\begin{equation*}
  h_{\mu}(\varphi) - \int_{L(Q)} \log J^+\varphi_{1,u}(x) \rmd\mu(u,x) \leq 0%
\end{equation*}
holds (Ruelle's inequality), the statement%
\begin{equation*}
  \sup_{\mu:\ (\mathrm{pr}_{\UC})_*=P \atop \supp \mu \subset L(Q)}\Bigl( h_{\mu}(\varphi) - \int_{L(Q)} \log J^+\varphi_{1,u}(x) \rmd\mu(u,x) \Bigr) = 0%
\end{equation*}
is equivalent to the existence of an SRB measure $\mu$ for the corresponding random dynamical system on the invariant set $L(Q)$ (in case that the unstable bundle does not vanish). Hence, the lower bound obtained for $h_{\inv}(K,Q)$ is zero iff for every invariant measure $P$ on $\UC$ there exists an associated SRB measure supported on $L(Q)$.%

\paragraph{Asymptotically subadditive cocycles.} The proofs of Propositions \ref{prop_limsupliminf_lb} and \ref{prop_supmeas} partially generalize known results for asymptotically subadditive cocycles (ASC), cf.~\cite{FHu,CZh}. An ASC over a continuous dynamical system $T:X\rightarrow X$ is a continuous map $\phi:\Z_+ \tm X \rightarrow \R$ so that for every $\ep>0$ there exists a continuous subadditive cocycle $\psi:\Z_+ \tm X \rightarrow \R$ over $T$ with%
\begin{equation*}
  \limsup_{n\rightarrow\infty}\frac{1}{n}\sup_{x\in X} |\phi_n(x) - \psi_n(x)| \leq \ep.%
\end{equation*}
Looking at Proposition \ref{prop_wprops}, we see that the family $(v_n^{\ep})_{\ep>0}$ has properties close to an ASC. Even though the approximating subadditive cocycles $w^{\ep,\delta}_n$ are not known to be continuous or even measurable, and the approximation in the sense above is not exactly satisfied (only in the limit as $\ep,\delta\downarrow0$), we can still prove some of the properties known for ASC such as the interchangeability of $\liminf$ and $\sup$ (Proposition \ref{prop_limsupliminf_lb}) and the expression of the supremal growth rate as the supremal ergodic average (Proposition \ref{prop_supmeas}).%

\section{Appendix}\label{sec_appendix}%

\subsection{Miscellaneous}%

The following lemma seems to be known, but we could not find a reference in the literature.%

\begin{lemma}\label{lem_volzero}
Let $M$ be a Riemannian manifold and $K \subset M$ a nonempty compact subset. Then for every $\ep>0$, the boundary of $N_{\ep}(K)$ has volume zero.%
\end{lemma}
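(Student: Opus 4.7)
The plan is to reduce the claim to a statement about level sets of the distance function $f(x) := \dist(x,K)$ and then invoke the co-area formula. First, since $f$ is continuous, the sublevel set $\{f < \ep\}$ is open and contained in $N_{\ep}(K)$, so
$$\partial N_{\ep}(K) \subset N_{\ep}(K) \setminus \{f < \ep\} = \{f = \ep\},$$
and it suffices to show $\vol(\{f = \ep\}) = 0$ for each fixed $\ep > 0$.

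Next I would use two standard properties of the distance function: $(a)$ $f$ is $1$-Lipschitz with respect to the geodesic distance, so by Rademacher's theorem (applied in local charts) it is differentiable $\vol$-almost everywhere on $M$; $(b)$ at every point $x \in M \setminus K$ of differentiability, $|\nabla f(x)| = 1$. Property $(b)$ follows because along a minimizing geodesic from a nearest point of $K$ to $x$, $f$ increases with unit speed, so the corresponding directional derivative has modulus $1$, while the Lipschitz bound forces $|\nabla f| \leq 1$.

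The key step is to invoke the co-area formula for Lipschitz functions on a Riemannian manifold: for every Borel set $A \subset M$,
$$\int_A |\nabla f| \, \rmd\vol = \int_{\R} \mathcal{H}^{n-1}(A \cap f^{-1}(t)) \, \rmd t.$$
Applied to $A := f^{-1}(\ep)$, which lies in $M \setminus K$ since $\ep > 0$, the integrand on the right is supported on the single point $t = \ep$ and hence has vanishing Lebesgue integral in $t$, whereas on the left $|\nabla f| = 1$ almost everywhere on $A$ by property $(b)$. Therefore $\vol(A) = 0$, and the lemma follows.

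The main obstacle is merely to justify the co-area formula and property $(b)$ in the Riemannian setting rather than in $\R^n$. Since $K$ is compact, the whole argument takes place in a bounded open neighborhood of $K$, which can be covered by finitely many charts in which the Riemannian metric is bi-Lipschitz equivalent to the Euclidean one; under such an equivalence the $(n-1)$-dimensional Hausdorff measure and the Riemannian volume change only by bounded multiplicative factors, so both the co-area formula and the unit-gradient computation transfer from $\R^n$ to $M$ without difficulty.
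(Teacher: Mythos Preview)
Your proof is correct and takes a genuinely different route from the paper's. The paper argues directly via the Lebesgue density theorem: for each $x \in \partial N_{\ep}(K)$ it selects a nearest point $y \in K$, notes that the open ball $B_{\ep}(y)$ lies in $N_{\ep}(K)$ and contains no boundary points, and checks that for $r \in (0,\ep)$ the intersection $B_r(x) \cap B_{\ep}(y)$ contains a ball of radius $r/2$; this forces the density of $\partial N_{\ep}(K)$ in $B_r(x)$ to be at most $1 - 2^{-n}$, which by Lebesgue's density theorem is only possible if $\vol(\partial N_{\ep}(K)) = 0$. Your approach via the co-area formula together with the eikonal identity $|\nabla \dist(\cdot,K)| = 1$ a.e.\ off $K$ is more analytic and less geometric; it has the advantage of yielding in one stroke that \emph{every} level set $\{f = t\}$ with $t>0$ has volume zero, whereas the paper's construction is tailored to the fixed $\ep$. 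Conversely, the paper's argument is more elementary and self-contained, relying only on the density theorem rather than on Federer's co-area formula, and the picture of an interior tangent ball makes the mechanism transparent without invoking differentiability of the distance function.
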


\begin{proof}
We give the proof for $M = \R^n$ with the Euclidean metric. The general case can be proved by replacing straight lines with geodesics. Hence, let $K \subset \R^n$ be a nonempty compact set and $\ep>0$. Take $x \in \partial N_{\ep}(K)$ and fix a point $y\in K$ such that $\dist(x,K) = \|x-y\| = \ep$. We claim that the open ball $B_{\ep}(y)$ is contained in $N_{\ep}(K)$ and does not contain any point from $\partial N_{\ep}(K)$. Indeed, if $z \in B_{\ep}(y)$, then $\dist(z,K) \leq \|z-y\| < \ep$ and all points $w \in \partial N_{\ep}(K)$ satisfy $\dist(w,K) = \ep$ implying $\|w-y\|\geq\ep$. Let $r \in (0,\ep)$. Then the intersection $B_r(x) \cap B_{\ep}(y)$ contains the ball $B_{r/2}(t x + (1-t)y)$ with $t := 1 -r/(2\ep)$. Indeed, if $w \in B_{r/2}(tx+(1-t)y)$, then%
\begin{align*}
  \|w - x\| &\leq \|w - tx - (1-t)y\| + \|tx + (1-t)y - x\|\\
	&< \frac{r}{2} + (1-t) \|x - y\| =\frac{r}{2} +  \frac{r}{2\ep}\ep = r,\\
  \|w - y\| &\leq \|w - tx - (1-t)y\| + \|tx + (1-t)y - y\|\\
	&< \frac{r}{2} + t\|x - y\| = \frac{r}{2} + \left(\ep - \frac{r}{2}\right) = \ep.%
\end{align*}
Hence, for all $r\in(0,\ep)$ we have%
\begin{equation*}
  \frac{\vol(B_r(x) \cap \partial N_{\ep}(K))}{\vol(B_r(x))} \leq \frac{ c r^n - c (r/2)^n }{ c r^n } = 1 - 2^{-n} < 1.%
\end{equation*}
This proves that the density $d(x) = \lim_{r\downarrow0}\vol(B_r(x)\cap\partial N_{\ep}(K))/\vol(B_r(x))$ is less than one wherever it exists on $\partial N_{\ep}(K)$. Lebesgue's density theorem asserts that $d(x) = 1$ at almost every point of $\partial N_{\ep}(K)$. This can only be the case if $\vol(\partial N_{\ep}(K)) = 0$.%
\end{proof}

The next lemma is essentially taken from \cite[Lem.~2.4]{Gru}.%

\begin{lemma}\label{lem_subadd}
Let $f:X \rightarrow X$ be a map on some set $X$ and $v:\Z_+ \tm X \rightarrow \R$ a subadditive cocycle over $f$, i.e.,%
\begin{equation*}
  v_{n+m}(x) \leq v_n(x) + v_m(f^n(x)) \mbox{\quad for all\ } x\in X,\ n,m\in\Z_+.%
\end{equation*}
Additionally suppose that%
\begin{equation}\label{eq_def_omega}
  \omega := \sup_{(n,x) \in \N\tm X}\frac{1}{n}|v_n(x)| < \infty.%
\end{equation}
Then for every $x\in X$, $n\in\N$ and $\ep \in (0,2\omega)$ there is a time $0\leq n_1 < n$ with%
\begin{equation*}
  \frac{1}{k}v_k(f^{n_1}(x)) > \frac{1}{n}v_n(x) - \ep \mbox{\ \ for all\ } 0 < k \leq n-n_1.%
\end{equation*}
Moreover, $n - n_1 \geq (\ep n)/(2\omega) \rightarrow \infty$ for $n \rightarrow \infty$.%
\end{lemma}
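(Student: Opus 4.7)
The plan is to prove Lemma \ref{lem_subadd} by a Pliss-type greedy selection: I assume no good starting time exists in an appropriate initial segment and derive a contradiction via two applications of subadditivity. Set $a := v_n(x)/n$, so that $|a| \leq \omega$. I would fix an integer $N$ of order $n(1 - \ep/(2\omega))$ and try to locate the desired $n_1$ inside $\{0, 1, \ldots, N\}$; any such $n_1$ would automatically satisfy the additional lower bound $n - n_1 \geq n - N \geq \ep n/(2\omega)$ claimed in the lemma.

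Assuming for contradiction that every $n_1$ in that initial segment is bad, there is, for each such $n_1$, a witness time $k(n_1) \in (0, n - n_1]$ with $v_{k(n_1)}(f^{n_1}(x)) \leq k(n_1)(a - \ep)$. Starting from $t_0 := 0$, I would iterate $t_{j+1} := t_j + k(t_j)$ until the first index $J$ with $t_J > N$; since $k(t_{J-1}) \leq n - t_{J-1}$, the iteration stays in $[0, n]$, so $N < t_J \leq n$. Telescoping through $J$ applications of subadditivity gives
\begin{equation*}
  v_{t_J}(x) \leq \sum_{j=0}^{J-1} v_{k(t_j)}(f^{t_j}(x)) \leq \sum_{j=0}^{J-1} k(t_j)(a - \ep) = t_J (a - \ep),
\end{equation*}
and one more application, together with the bound $v_{n - t_J}(f^{t_J}(x)) \leq (n - t_J)\omega$, yields
\begin{equation*}
  na \leq t_J(a - \ep) + (n - t_J)\omega \qquad \Longrightarrow \qquad t_J\ep \leq (n - t_J)(\omega - a) \leq 2\omega(n - t_J).
\end{equation*}
Rearranging, $t_J \leq 2\omega n/(2\omega + \ep)$, which contradicts $t_J > N$ once $N$ is chosen to be at least this large.

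I do not anticipate any genuine conceptual obstacle; the argument is entirely standard. The only delicate point is the integer-rounding bookkeeping needed to align the choice of $N$ with the exact constant $\ep n/(2\omega)$ stated in the lemma. The calculation above literally produces $n - t_J \geq \ep n/(2\omega + \ep)$, which for $\ep \in (0, 2\omega)$ is only guaranteed to exceed $\ep n/(4\omega)$, so some mild looseness in the stated constant (or a slightly sharper choice of the witnesses $k(\cdot)$, perhaps exploiting a lower bound on $a$) may be implicit. The degenerate edge case $t_J = n$ has to be handled separately, but it is immediate: it forces $na \leq n(a - \ep)$, hence $0 \leq -n\ep$, a contradiction for $\ep > 0$.
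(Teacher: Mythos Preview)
Your approach is sound but takes a genuinely different route from the paper's, and --- as you already flag --- it does not quite recover the constant $\ep n/(2\omega)$ stated in the lemma: your Pliss-type greedy telescoping only yields $n - n_1 \geq \ep n/(2\omega + \ep)$. That weaker bound would still suffice for the application in Proposition~\ref{prop_limsupliminf_lb} (where only $n_k - n_k^* \to \infty$ is needed), but it is not the lemma as stated.

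The paper's argument is shorter and gives the exact constant. Rather than iterating witnesses, it simply takes $n_1$ to be the \emph{last} time in $(0,n)$ at which $v_{n_1}(x)/n_1 \leq \sigma - \ep$ (with $\sigma := v_n(x)/n$), setting $n_1 = 0$ if no such time exists. A single use of subadditivity, $v_n(x) \leq v_{n_1}(x) + v_{n-n_1}(f^{n_1}(x))$, together with $v_{n_1}(x)/n_1 \leq \sigma - \ep$, gives
\[
  \ep \leq \sigma - \frac{v_{n_1}(x)}{n_1} \leq \frac{n-n_1}{n}\Bigl(\frac{v_{n-n_1}(f^{n_1}(x))}{n-n_1} - \frac{v_{n_1}(x)}{n_1}\Bigr) \leq \frac{n-n_1}{n}\cdot 2\omega,
\]
hence $n - n_1 \geq \ep n/(2\omega)$ on the nose. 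The maximality of $n_1$, combined with $v_k(f^{n_1}(x)) \geq v_{k+n_1}(x) - v_{n_1}(x)$, then yields the averaged inequality for every $0 < k \leq n - n_1$. In short, the paper trades your inductive telescoping for a single well-chosen $n_1$ and thereby avoids the loss in the constant; your argument is the more generic Pliss machinery, which works but is not tight here.
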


\begin{proof}
We write $\sigma := v_n(x)/n$ and define%
\begin{equation*}
  \gamma := \min_{0 < k \leq n}\frac{1}{k}v_k(x).%
\end{equation*}
If $\gamma \geq \sigma - \ep$, the assertion follows with $n_1 = 0$. For $\gamma < \sigma - \ep$, observing that the minimum cannot be attained at $k = n$, let%
\begin{equation*}
  n_1 := \max\Bigl\{ k \in (0,n) \cap \Z \ :\ \frac{1}{k}v_k(x) \leq \sigma - \ep \Bigr\},%
\end{equation*}
implying $v_{n_1}(x)/n_1 \leq \sigma - \ep$. We obtain%
\begin{align*}
  \ep &\leq \frac{1}{n}v_n(x) - \frac{1}{n_1}v_{n_1}(x) = \frac{1}{n}v_{n_1 + (n - n_1)}(x) - \frac{1}{n_1}v_{n_1}(x)\\
			&\leq \frac{1}{n}\left(v_{n_1}(x) + v_{n-n_1}(f^{n_1}(x))\right) - \frac{1}{n_1}v_{n_1}(x)\\
			   &= \frac{1}{n}\left(-\frac{n-n_1}{n_1}v_{n_1}(x) + \frac{n-n_1}{n-n_1}v_{n-n_1}(f^{n_1}(x))\right)\\
				 &= \frac{n-n_1}{n}\left(\frac{1}{n-n_1}v_{n-n_1}(f^{n_1}(x)) - \frac{1}{n_1}v_{n_1}(x)\right) \leq 2\omega \frac{n-n_1}{n}.%
\end{align*}
This implies%
\begin{equation*}
  n - n_1 \geq \frac{\ep n}{2\omega} \rightarrow \infty \mbox{\quad for\ } n \rightarrow \infty.%
\end{equation*}
For $0 < k \leq n-n_1$ we have $v_{k+n_1}(x)/(k+n_1) > \sigma - \ep$ and this yields%
\begin{align*}
  \frac{1}{k}v_k(f^{n_1}(x)) &\geq \frac{1}{k}\left(v_{k+n_1}(x) - v_{n_1}(x)\right)\\
	                              &> \frac{1}{k}\left((k+n_1)(\sigma - \ep) - n_1(\sigma - \ep)\right) = \sigma - \ep,%
\end{align*}
completing the proof.%
\end{proof}

The following lemma is used in \cite[Lem.~A.6]{Mor}.%

\begin{lemma}\label{lem_combinatorics}
Let $n>m$ be positive integers. For each $i$ in the range $0 \leq i < m$ choose integers $q_i,r_i$ such that $n = i + q_im + r_i$ with $q_i \geq 0$ and $0 \leq r_i < m$. Then%
\begin{equation*}
  \{ 0,1,\ldots,n-m \} = \{ i + jm : 0 \leq i < m,\ 0 \leq j < q_i \},%
\end{equation*}
and all integers in the set on the right-hand side are uniquely parametrized by $i$ and $j$.%
\end{lemma}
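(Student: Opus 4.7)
The plan is to exhibit the claimed equality via Euclidean division of integers by $m$. Given any integer $k$, there is a unique pair $(i,j) \in \Z \tm \Z$ with $0 \leq i < m$ and $k = i + jm$; this is the standard division-with-remainder identity. Applied to the right-hand side, this already yields the uniqueness assertion: the same $k = i + jm$ cannot arise from two distinct choices of $(i,j)$ in the prescribed range. Both inclusions then reduce to verifying, for fixed $i\in\{0,\ldots,m-1\}$, the equivalence
\begin{equation*}
  0 \leq j \mbox{ and } i + jm \leq n - m \iff 0 \leq j < q_i.
\end{equation*}

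First I would handle the direction $(\Leftarrow)$: if $0 \leq j < q_i$, then $(j+1)m \leq q_i m = n - i - r_i \leq n - i$, so $i + jm \leq n - m$, which places $i + jm$ in $\{0,1,\ldots,n-m\}$. For the direction $(\Rightarrow)$, suppose $k = i + jm$ with $0 \leq k \leq n-m$. Then $(j+1)m \leq n - i$, i.e., $q_i m + r_i = n - i \geq (j+1)m$, which rearranges to $q_i \geq j + 1 - r_i/m$. Since $0 \leq r_i/m < 1$ and $q_i, j+1$ are integers, this forces $q_i \geq j+1$, i.e., $j < q_i$, as required.

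The only point requiring any care is the last integrality step: one has to observe that the slack from $r_i$ is strictly less than one unit of $m$, so the integer inequality $q_i \geq j+1$ can be extracted. There is no deeper obstacle; the lemma is purely a bookkeeping statement about Euclidean division, consistent with its use in the proof of Proposition \ref{prop_supmeas} to split the range $\{0,1,\ldots,n-m\}$ into arithmetic progressions of common difference $m$ indexed by their starting residue $i$.
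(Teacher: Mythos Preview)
Your proof is correct and follows essentially the same approach as the paper: both establish uniqueness via Euclidean division and verify the two inclusions by the elementary inequalities coming from $n = i + q_i m + r_i$ with $0 \le r_i < m$. The only cosmetic difference is in the $(\Rightarrow)$ direction, where the paper argues directly with integers (observing $l < n - (m-1) \le n - r_i$), while you divide by $m$ and appeal to integrality; the content is identical.
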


\begin{proof}
It is clear that $(i_1,j_1) \neq (i_2,j_2)$ implies $i_1 + j_1m \neq i_2 + j_2m$, since $0 \leq i_1,i_2 < m$. Hence, it suffices to show that the two sets are equal. To this end, we first show that $i + jm \leq n - m$, whenever $0 \leq i < m$ and $0 \leq j < q_i$. Since $j < q_i$, we have $(j+1)m \leq q_i m + r_i$. Adding $i$ on both sides yields $(i + jm) + m \leq n$, or equivalently $i+jm \leq n - m$.%

Conversely, let us show that every number $l$ between $0$ and $n-m$ can be written as $i + jm$ with $0 \leq i < m$ and $0 \leq j < q_i$. To this end, let $i,j$ be the unique nonnegative integers so that $l = i + jm$ with $0 \leq i < m$. We need to show that $j < q_i$. This is equivalent to%
\begin{equation*}
  l = i + jm < i + q_im = n - r_i.%
\end{equation*}
This inequality holds, because $l < n - (m - 1) \leq n - r_i$, since $0 \leq r_i < m$.%
\end{proof}

The next lemma is taken from \cite[Thm.~4]{Cra}.%

\begin{lemma}\label{lem_crauel}
Consider a measurable skew-product $\Theta_n:\Omega\tm M \rightarrow \Omega \tm M$ ($n\in\Z$), $(\omega,x) \mapsto (\theta^n\omega,\varphi(n,\omega)x)$, where $M$ is a compact metrizable space equipped with its Borel-$\sigma$-algebra and $(\Omega,\FC,P)$ is a probability space. We assume that $\theta$ preserves the measure $P$ and that $\varphi$ is continuous in $x$. Let $(\sigma_n)_{n\in\N}$ be a family of random measures on $\Omega\tm M$, $(t_n)_{n\in\N}$ a sequence of integer-valued random variables such that $t_n \geq n$ almost surely. Put%
\begin{equation*}
  \gamma^n := \frac{1}{t_n}\sum_{k=0}^{t_n-1}(\Theta_k)_*\sigma_n.%
\end{equation*}
Then any limit point $\gamma^*$ of $(\gamma^n)_{n\in\N}$ is $\Theta$-invariant, i.e., $(\Theta_n)_*\gamma^* = \gamma^*$ for all $n\in\Z$.%
\end{lemma}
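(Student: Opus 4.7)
The plan is to adapt the classical Krylov-Bogolyubov construction to the random skew-product setting. Since $(\Theta_n)_{n\in\Z}$ is a group action generated by $\Theta_1$, which is a measurable bijection of $\Omega\tm M$, it suffices to establish $(\Theta_1)_*\gamma^* = \gamma^*$; invariance under $(\Theta_n)_*$ for all $n\in\Z$ then follows by iteration and inversion. I would fix a bounded measurable test function $f:\Omega\tm M\to\R$ that is continuous in $x$ for each $\omega$ --- such functions form a natural separating class for narrow convergence of random measures with $\omega$-marginal $P$ --- and aim to prove $\lim_{n\to\infty}\int (f\circ\Theta_1 - f)\,\rmd\gamma^n = 0$. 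Any narrow limit $\gamma^*$ of $(\gamma^n)$ then satisfies $\int f\circ\Theta_1\,\rmd\gamma^* = \int f\,\rmd\gamma^*$ for every such $f$, yielding the claimed invariance.

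The core identity is a formal telescoping in the pushforward: applying $(\Theta_1)_*$ to each summand in the definition of $\gamma^n$ shifts the index by one, so that
\begin{equation*}
  (\Theta_1)_*\gamma^n - \gamma^n \;=\; \frac{1}{t_n}\sum_{k=0}^{t_n-1}\bigl[(\Theta_{k+1})_*\sigma_n - (\Theta_k)_*\sigma_n\bigr] \;=\; \frac{1}{t_n}\bigl[(\Theta_{t_n})_*\sigma_n - \sigma_n\bigr].
\end{equation*}
If $t_n$ were deterministic, integrating a bounded $f$ against the right-hand side would immediately give $\bigl|\int f\,\rmd[(\Theta_1)_*\gamma^n - \gamma^n]\bigr|\leq 2\|f\|_\infty/t_n \leq 2\|f\|_\infty/n \to 0$, and we would be done.

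The main obstacle --- and the reason the telescoping above is only formal --- is that $t_n$ is genuinely a random variable, so that in the fiberwise representation $\gamma^n_\omega = t_n(\omega)^{-1}\sum_{k=0}^{t_n(\omega)-1}((\Theta_k)_*\sigma_n)_\omega$ the summation range depends on $\omega$, and one cannot simply pull $(\Theta_1)_*$ past the sum. To circumvent this, I would proceed fiberwise: set $F_k(\omega):=\int_M f(\omega,x)\,\rmd((\Theta_k)_*\sigma_n)_\omega(x)$, and use the cocycle identity $((\Theta_{k+1})_*\sigma_n)_{\theta\omega} = \varphi(1,\omega)_*((\Theta_k)_*\sigma_n)_\omega$ to verify the pointwise relation $\int (f\circ\Theta_1)(\omega,\cdot)\,\rmd((\Theta_k)_*\sigma_n)_\omega = F_{k+1}(\theta\omega)$. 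Substituting this into $\int(f\circ\Theta_1)\,\rmd\gamma^n$ and applying the $P$-preserving substitution $\omega\mapsto\theta^{-1}\omega$, the difference $\int(f\circ\Theta_1 - f)\,\rmd\gamma^n$ reduces, after a careful re-indexing, to boundary contributions whose integrand is pointwise bounded by $2\|f\|_\infty/t_n(\omega)\leq 2\|f\|_\infty/n$. Integrating against $P$ and letting $n\to\infty$ gives the required vanishing, and passing to a narrow limit along a convergent subsequence of $(\gamma^n)$ completes the argument. The delicate point is carrying out the re-indexing consistently when the summation bounds themselves depend on $\omega$: this is exactly where the $\theta$-invariance of $P$, combined with the cocycle property of $\varphi$, becomes essential.
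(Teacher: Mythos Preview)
The paper does not supply its own proof of this lemma; it simply attributes the result to Crauel \cite[Thm.~4]{Cra}. So there is nothing to compare against directly, and your task is really to reproduce (a version of) Crauel's argument. Your Krylov--Bogolyubov outline is the right framework, and for \emph{deterministic} $t_n$ (which is in fact the only case the paper invokes, with $t_n=n$) your argument is correct: after the substitution $\omega\mapsto\theta^{-1}\omega$ the two sums have the same length $t_n$, the telescoping is genuine, and the integrand is bounded pointwise by $2\|f\|_\infty/t_n\le 2\|f\|_\infty/n$.

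For genuinely random $t_n$, however, your claimed pointwise bound does not hold. After the substitution you are comparing
\[
  \frac{1}{t_n(\theta^{-1}\omega)}\sum_{k=1}^{t_n(\theta^{-1}\omega)}F_k(\omega)
  \quad\text{with}\quad
  \frac{1}{t_n(\omega)}\sum_{k=0}^{t_n(\omega)-1}F_k(\omega),
\]
and when $t_n(\theta^{-1}\omega)\neq t_n(\omega)$ these are Ces\`aro averages of \emph{different lengths}; their difference can be of order $\|f\|_\infty$, not $\|f\|_\infty/n$. (Take $a:=t_n(\theta^{-1}\omega)=n$, $b:=t_n(\omega)=2n$, and choose $f$ so that $F_k(\omega)=\|f\|_\infty$ for $k\le n$ and $F_k(\omega)=-\|f\|_\infty$ for $k>n$; the difference is then $\approx\|f\|_\infty$.) So ``boundary contributions bounded by $2\|f\|_\infty/t_n(\omega)$'' is false as stated, and the sentence flagging this step as ``delicate'' does not resolve it. To handle random $t_n$ one needs an additional argument---for instance, controlling the mismatch $|t_n(\theta^{-1}\omega)-t_n(\omega)|$ in $L^1(P)$, or (as Crauel does) working with a more careful decomposition that does not rely on a pointwise telescoping bound. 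Since the paper only uses the deterministic case, you could also simply restrict to that and cite Crauel for the general statement.
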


\subsection{Graph transform and volume estimates}\label{subsec_volume_lemma}%

In this subsection, we prove the volume lemma (Lemma \ref{lem_volume}). This is a straightforward adaptation of the proof given in \cite{You} for autonomous systems. The proof does not use any structure of the base space $\UC$ except for compactness and metrizability. In fact, the proof applies to arbitrary skew-product flows with compact metrizable base space and a compact set $Q$ satisfying the partial hyperbolicity assumption (P2).%

We will use the following notation: If $L$ is a linear map between Euclidean spaces, we write $m(L) = \min_{|x|=1}|Lx|$ for its conorm and $\|L\| = \max_{|x|=1}|Lx|$ for its norm. If $i,j\in\Z$ with $i \leq j$, we write $[i:j] = [i,j] \cap \Z = \{i,i+1,\ldots,j\}$. We simply write $d$ for the product metric on $\UC \tm M$. By $\Lip(\cdot)$ we denote the Lipschitz constant of a map.%

We first extend both $E^+$ and $E^{0-}$ continuously to a compact neighborhood $\KC$ of $L(Q)$, without requiring invariance of these extended bundles. We define the continuous function $\xi:\KC \rightarrow \R$,%
\begin{equation*}
  \xi(u,x) := -\log\bigl|\det\rmD\varphi_{1,u}(x)_{|E^+(u,x)}\bigr|.%
\end{equation*}
Recall that we denote by $\lambda>0$ the expansivity constant on $E^+$ as required in (P2).%

For $r>0$ and $(u,x)\in\KC$ we define%
\begin{align*}
  E^+_r(u,x) := &\left\{v\in E^+(u,x)\ :\ |v| < r \right\}, \quad  E^{0-}_r(u,x) := \left\{ v\in E^{0-}(u,x)\ :\ |v| < r \right\},\\
	& \qquad\quad\quad E_r(u,x) := E^+_r(u,x) \tm E^{0-}_r(u,x).%
\end{align*}

We fix some $0 < \ep \ll \lambda$. According to (P2), there exists $N\in\N$ such that for all $(u,x)\in L(Q)$,%
\begin{align*}
  v \in E^+(u,x) \quad\Rightarrow\quad |\rmD\varphi_{N,u}(x)v| \geq \rme^{\lambda N}|v|,\\
	v \in E^{0-}(u,x) \quad\Rightarrow\quad |\rmD\varphi_{N,u}(x)v| \leq \rme^{\ep N}|v|.%
\end{align*}

We work with a time-discretized system in the following, hence we may assume $N=1$. Thus,%
\begin{equation}\label{eq_normest}
  m(\rmD\varphi_{1,u}(x)_{|E^+(u,x)}) \geq \rme^{\lambda},\quad \|\rmD\varphi_{1,u}(x)_{|E^{0-}(u,x)}\| \leq \rme^{\ep},\quad (u,x) \in L(Q).%
\end{equation}

We fix an $r>0$ small enough so that the following holds: If $(u,x) \in \KC$ and $\Phi_1(u,x) \in \KC$, then%
\begin{equation*}
  \widetilde{\varphi}_{u,x}:E_r(u,x) \rightarrow T_{\varphi(1,x,u)}M,\quad \widetilde{\varphi}_{u,x} := \exp_{\varphi(1,x,u)}^{-1} \circ \varphi_{1,u} \circ \exp_x%
\end{equation*}
makes sense. Using our assumption that the dimensions of $E^+$ and $E^{0-}$ are constant, we put%
\begin{equation*}
  p := \dim E^+ \mbox{\quad and \quad } q := \dim E^{0-}.%
\end{equation*}
In the following, we identify $E^+(u,x)$ with $\R^p$ and $E^{0-}(u,x)$ with $\R^q$.%

For the rest of this subsection, we fix%
\begin{equation*}
  0 < \lambda' < \lambda \mbox{\quad and\quad} \ep' := 2\ep.%
\end{equation*}

The idea of the following lemma is to replace $\widetilde{\varphi}_{u,x}$ for $(u,x) \in \KC \backslash L(Q)$ with a linear map $L_{u,x}$ which is $C^{\Lip}$-close to $\widetilde{\varphi}_{u,x}$ in a small neighborhood of $0 \in T_xM$ and preserves $E^+$ and $E^{0-}$.%

\begin{lemma}\label{lem_33}
For every $\tau>0$ there exist $\delta,r_0>0$ such that for all $(u,x) \in \KC$ with the property%
\begin{equation}\label{eq_prop}
  \exists (v,y) \in L(Q) \mbox{ with } d((u,x),(v,y)) \leq \delta \mbox{ and } d(\Phi_1(u,x),\Phi_1(v,y)) \leq \delta%
\end{equation}
it holds that%
\begin{equation*}
  \Lip(\widetilde{\varphi}_{u,x} - L_{u,x})_{|E_{r_0}(u,x)} < \tau,%
\end{equation*}
where $L_{u,x} \in \End(\R^{p+q})$ is a linear map of the form $L_{u,x} = L_{u,x}^1 \oplus L_{u,x}^2$ with $L_{u,x}^1\in\End(\R^p)$, $L_{u,x}^2\in\End(\R^q)$ and%
\begin{equation*}
  m(L_{u,x}^1) \geq \rme^{\lambda'},\quad \|L_{u,x}^2\| \leq \rme^{\ep'}.%
\end{equation*}
\end{lemma}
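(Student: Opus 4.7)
\emph{Plan.} The intuition is that on $L(Q)$ the derivative $\rmD\varphi_{1,u}(x)$ already has the desired block-diagonal form with respect to the invariant splitting $E^+\oplus E^{0-}$, and satisfies $m(\cdot_{|E^+}) \geq \rme^\lambda$ and $\|\cdot_{|E^{0-}}\| \leq \rme^\ep$ by \eqref{eq_normest}. For $(u,x)\in\KC$ close to $L(Q)$ in the sense of \eqref{eq_prop}, $\rmD\varphi_{1,u}(x)$ is close (in operator norm) to $\rmD\varphi_{1,v}(y)$ by continuity, and the extended subbundles $E^+,E^{0-}$ at $(u,x)$ and $\Phi_1(u,x)$ are close to those at $(v,y)$ and $\Phi_1(v,y)$ by the continuity of $E^+,E^{0-}$ mentioned after \eqref{eq_unstabledet}. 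We will use this double closeness to block-diagonalize $\rmD\varphi_{1,u}(x)$ approximately, and then pass from the derivative at $0$ to a Lipschitz estimate on a small ball via $C^2$-regularity.

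\textbf{Step 1 (defining $L_{u,x}$).} Fix $(u,x)\in\KC$ satisfying \eqref{eq_prop}. Using the splittings at source and target,
\[
  T_xM = E^+(u,x)\oplus E^{0-}(u,x),\qquad T_{\varphi(1,x,u)}M = E^+(\Phi_1(u,x))\oplus E^{0-}(\Phi_1(u,x)),
\]
write $\rmD\varphi_{1,u}(x)$ as a $2\tm 2$ block matrix $\begin{pmatrix} A_{u,x}^{11} & A_{u,x}^{12} \\ A_{u,x}^{21} & A_{u,x}^{22}\end{pmatrix}$. When $(u,x)\in L(Q)$ the off-diagonal blocks vanish by (P2). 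Define
\[
  L_{u,x} := A_{u,x}^{11}\oplus A_{u,x}^{22},\qquad L_{u,x}^1 := A_{u,x}^{11},\qquad L_{u,x}^2 := A_{u,x}^{22},
\]
viewed as an endomorphism of $\R^p\oplus\R^q$ after the obvious identifications via orthonormal frames of $E^+$ and $E^{0-}$ (which can be chosen to depend continuously on $(u,x)$ in a neighborhood; if global frames are not available, one works locally, since $\KC$ is compact).

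\textbf{Step 2 (norm/conorm and approximation estimates).} By joint continuity of $\rmD\varphi_{1,u}(x)$, $E^+(\cdot,\cdot)$ and $E^{0-}(\cdot,\cdot)$ together with the compactness of $L(Q)$, for every $\eta>0$ there is $\delta_1>0$ such that \eqref{eq_prop} with $\delta\leq\delta_1$ forces
\[
  \|A_{u,x}^{12}\| + \|A_{u,x}^{21}\| < \eta,\qquad m(A_{u,x}^{11}) \geq \rme^{\lambda'},\qquad \|A_{u,x}^{22}\| \leq \rme^{\ep'},
\]
the last two inequalities following from the strict inequalities $\lambda'<\lambda$, $\ep<\ep'$ together with \eqref{eq_normest} at the nearby point $(v,y)\in L(Q)$. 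In particular $\|\rmD\varphi_{1,u}(x) - L_{u,x}\| < \eta$.

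\textbf{Step 3 (Lipschitz estimate via $C^2$-regularity).} By the assumed $C^2$-regularity of $\varphi$ and the smoothness of the exponential map, the map $(u,x,v)\mapsto \rmD^2\widetilde\varphi_{u,x}(v)$ is continuous on a neighborhood of the zero-section of $\KC\tm T M$ and hence uniformly bounded, say by some $C_2>0$, on a set of the form $\{(u,x,v) : (u,x)\in\KC,\ |v|\leq r_*\}$. For $v\in E_{r_0}(u,x)$ with $r_0\leq r_*$, the mean value inequality gives
\begin{equation*}
  \|\rmD\widetilde\varphi_{u,x}(v) - L_{u,x}\| \leq \|\rmD\widetilde\varphi_{u,x}(0) - L_{u,x}\| + C_2\, r_0.
\end{equation*}
Now $\rmD\widetilde\varphi_{u,x}(0) = \rmD\varphi_{1,u}(x)$ (under the identifications by $\exp$), so Step 2 yields $\|\rmD\widetilde\varphi_{u,x}(0) - L_{u,x}\| < \eta$. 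Choosing first $r_0>0$ with $C_2 r_0 < \tau/2$ and then $\delta \leq \delta_1$ small enough so that $\eta < \tau/2$, we obtain $\|\rmD(\widetilde\varphi_{u,x} - L_{u,x})(v)\| < \tau$ for every $v\in E_{r_0}(u,x)$, which is exactly $\Lip(\widetilde\varphi_{u,x} - L_{u,x})_{|E_{r_0}(u,x)} < \tau$, as required.

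\textbf{Anticipated obstacle.} The only genuinely delicate point is Step 2: the extended bundles $E^+,E^{0-}$ on $\KC\setminus L(Q)$ are \emph{not} invariant, so the control over the off-diagonal blocks relies on combining the invariance of the splitting at the nearby point $(v,y)\in L(Q)$ with the continuity of both $\rmD\varphi_{1,\cdot}(\cdot)$ and the subbundles; this is where the double closeness assumption in \eqref{eq_prop} is essential (closeness at $(u,x)$ alone would only control the source splitting, not the target splitting). The remaining arguments are routine continuity and Taylor-type estimates.
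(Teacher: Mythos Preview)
Your proof is correct, but it follows a different route from the paper's.

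\textbf{Difference in the construction of $L_{u,x}$.} You take $L_{u,x}$ to be the block-diagonal part of $\rmD\widetilde\varphi_{u,x}(0)=\rmD\varphi_{1,u}(x)$ with respect to the extended splittings at $(u,x)$ and $\Phi_1(u,x)$, and then argue directly by continuity that the off-diagonal blocks are small and the diagonal blocks inherit the conorm/norm bounds from the nearby point $(v,y)\in L(Q)$. The paper instead chooses splitting-preserving near-isometries $h_1:T_xM\to T_yM$, $h_2:T_{\varphi(1,x,u)}M\to T_{\varphi(1,y,v)}M$ and sets $L_{u,x}:=\rmD(h_2^{-1}\circ\widetilde\varphi_{v,y}\circ h_1)(0)$, i.e., it \emph{transports} the derivative from the point on $L(Q)$ rather than diagonalising the derivative at $(u,x)$. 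The paper's $L_{u,x}$ is block-diagonal and has the right bounds automatically (because $\rmD\widetilde\varphi_{v,y}(0)$ does, by invariance on $L(Q)$), and the Lipschitz estimate is obtained by a triangle-inequality splitting through the intermediate map $\widehat\varphi_{(u,x)\to(v,y)}:=h_2^{-1}\circ\widetilde\varphi_{v,y}\circ h_1$.

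\textbf{What each approach buys.} Your argument is shorter and more transparent; it uses nothing beyond joint continuity of $(u,x)\mapsto \rmD\varphi_{1,u}(x)$ and of the extended subbundles, plus a mean-value estimate. The paper's approach has the minor advantage that the Lipschitz remainder estimate is derived only from uniform $C^1$-continuity (via $\|\rmD r_{v,y}(\xi)\|\to 0$ as $\xi\to 0$), whereas you invoke a uniform $C^2$-bound; since the framework already assumes $C^2$ vector fields and states that second derivatives depend continuously on $(u,x)$, this is harmless here, and you could equally well replace your $C_2 r_0$ bound by a uniform-continuity argument for $\rmD\widetilde\varphi_{u,x}$. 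Your remark that the second half of \eqref{eq_prop} is needed to control the \emph{target} splitting is slightly overstated: continuity of $\Phi_1$ already forces $\Phi_1(u,x)$ close to $\Phi_1(v,y)$ once $d((u,x),(v,y))\leq\delta$; the explicit second inequality in \eqref{eq_prop} merely guarantees that $\Phi_1(u,x)\in\KC$ so that the extended bundles are defined there, and the paper later uses exactly this form to define its working neighborhood $\KC$ in Proposition~\ref{prop_volest}.
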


\begin{proof}
For the given $\tau>0$ we choose $r_0' < r$ small enough so that%
\begin{equation}\label{eq_zerolipest}
  \Lip(\widetilde{\varphi}_{v,y} - \rmD\widetilde{\varphi}_{v,y}(0))_{|E_{r_0'}(v,y)} < \frac{\tau}{3} \mbox{\quad for all\ } (v,y) \in L(Q).%
\end{equation}
Existence of such $r_0'$ can be seen as follows. Writing%
\begin{equation*}
  r_{v,y}(w) := \widetilde{\varphi}_{v,y}(w) - \rmD\widetilde{\varphi}_{v,y}(0)w,%
\end{equation*}
we find that for $w_1,w_2 \in E_r(v,y)$,%
\begin{equation*}
  |r_{v,y}(w_1) - r_{v,y}(w_2)| \leq \|\rmD r_{v,y}(\xi)\| \cdot |w_1 - w_2|%
\end{equation*}
with a point $\xi \in [w_1,w_2]$. The map $(v,y,\xi) \mapsto \|\rmD r_{v,y}(\xi)\|$ is continuous and satisfies $\|\rmD r_{v,y}(0)\| \equiv 0$. Hence, by uniform continuity on compact sets, we find the desired $r_0'$.%

Now choose $\delta$ small enough so that for any $(u,x)\in\KC$ satisfying \eqref{eq_prop} with some $(v,y)\in L(Q)$ we can find linear isomorphisms $h_1:T_xM \rightarrow T_yM$ and $h_2:T_{\varphi(1,x,u)}M \rightarrow T_{\varphi(1,y,v)}M$ preserving $E^+$ and $E^{0-}$ with $\max\{\Lip(h_1),\Lip(h_2^{-1})\} < \sqrt{2}$. This is possible by uniform continuity of $E^+(\cdot,\cdot)$ and $E^{0-}(\cdot,\cdot)$ on the compact set $L(Q)$. Define%
\begin{equation*}
  \widehat{\varphi}_{(u,x)\rightarrow(v,y)} := h_2^{-1} \circ \widetilde{\varphi}_{y,v} \circ h_1,\quad L_{u,x} := \rmD\widehat{\varphi}_{(u,x)\rightarrow(v,y)}(0).%
\end{equation*}
We have%
\begin{equation*}
  \widehat{\varphi}_{(u,x)\rightarrow(v,y)} - \rmD\widehat{\varphi}_{(u,x)\rightarrow(v,y)}(0) = h_2^{-1} \circ (\widetilde{\varphi}_{v,y} - \rmD\widetilde{\varphi}_{(v,y)}(0)) \circ h_1.%
\end{equation*}
Hence, using \eqref{eq_zerolipest}, we can find $0 < r_0 < r_0'$ such that%
\begin{equation}\label{eq_firstlipest}
  \Lip\left(\widehat{\varphi}_{(u,x)\rightarrow(v,y)} - \rmD\widehat{\varphi}_{(u,x)\rightarrow(v,y)}(0)\right)_{|E_{r_0}(u,x)} < \frac{2\tau}{3}%
\end{equation}
whenever $d((u,x),(v,y)) \leq \delta$ and $d(\Phi_1(u,x),\Phi_1(v,y)) \leq \delta$.%

The linear map $L_{u,x}$ is of the form $L_{u,x}^1 \oplus L_{u,x}^2$ with respect to $E^+(u,x) \oplus E^{0-}(u,x)$ and satisfies%
\begin{equation*}
  L_{u,x} = h_2^{-1} \circ \rmD\widetilde{\varphi}_{v,y}(0) \circ h_1.%
\end{equation*}
Hence, if $\delta$ is small enough, we can choose the Lipschitz constants of $h_1$ and $h_2^{-1}$ small enough to obtain $m(L_{u,x}^1) \geq \rme^{\lambda'}$ and $\|L_{u,x}^2\| \leq \rme^{\ep'}$, using \eqref{eq_normest}. Finally, for $\delta$ small enough, $d((u,x),(v,y)) \leq \delta$ and $d(\Phi_1(u,x),\Phi_1(v,y)) \leq \delta$ imply%
\begin{equation}\label{eq_secondlipest}
  \Lip (\widetilde{\varphi}_{u,x} - \widehat{\varphi}_{(u,x)\rightarrow(v,y)})_{|E_{r_0}(u,x)} < \frac{\tau}{3},%
\end{equation}
since $E^+$ and $E^{0-}$ and also $\rmD\widetilde{\varphi}_{\cdot,\cdot}(\cdot)$ are continuous. We end up with%
\begin{align*}
  \Lip ( \widetilde{\varphi}_{u,x} - L_{u,x} ) &\leq \Lip ( \widetilde{\varphi}_{u,x} - \widehat{\varphi}_{(u,x)\rightarrow(v,y)} )_{|E_{r_0}}\\
	& \qquad + \Lip (\widehat{\varphi}_{(u,x)\rightarrow(v,y)} - \rmD\widehat{\varphi}_{(u,x)\rightarrow(v,y)}(0) )_{|E_{r_0}} \stackrel{\eqref{eq_firstlipest},\eqref{eq_secondlipest}}{<} \frac{\tau}{3} + \frac{2\tau}{3} = \tau.%
\end{align*}
The proof is complete.%
\end{proof}

Now we introduce some notation to describe the graph transform.%

We write $R^p(r) := \{x\in\R^p : |x| < r\}$ and $R(r) := R^p(r) \tm R^q(r)$. Let $F:R(r) \rightarrow \R^p \tm \R^q$ be a map. The graph transform by $F$ from $R(r)$ to $R(s)$, for which we write $\Gamma = \Gamma_{r,s}(F)$, is defined as follows. Let $g:R^p(r) \rightarrow R^q(r)$. We say that $\Gamma g$ is well-defined if it is a function from $R^p(s)$ to $R^q(s)$ satisfying%
\begin{equation}\label{eq_graphtransform}
  \graph(\Gamma g) = F(\mbox{graph}(g)) \cap R(s).%
\end{equation}
If $g$ is a $C^1$-function, then $\graph(g)$ can be viewed as a submanifold of $R(r)$ and we write $T(\graph(g))$ for its tangent bundle.%

\begin{lemma}\label{lem_34}
There exist $\tau,\sigma>0$ (depending on $p,q,\lambda',\ep'$) with the following property. Let $r>0$ be arbitrary and let $F:R(r) \rightarrow \R^p \tm \R^q$ be a $C^1$-diffeomorphism onto its image with $F(0) = 0$ and $\widehat{L} \in \End(\R^{p+q})$ of the form $\widehat{L} = \widehat{L}_1 \oplus \widehat{L}_2$ with $\widehat{L}_1 \in \End(\R^p)$, $\widehat{L}_2 \in \End(\R^q)$ such that the following conditions are satisfied:%
\begin{equation*}
  \Lip(F - \widehat{L}) < \tau,\quad m(\widehat{L}_1) \geq \rme^{\ep'}, \quad \|\widehat{L}_2\| \leq \rme^{\lambda'}.%
\end{equation*}
If $g:R^p(r) \rightarrow R^q(r)$ is a $C^1$-map with $|g(0)| \leq r/2$ and $\|\rmD g\| \leq \sigma$,\footnote{By $\|\rmD g\| \leq \sigma$, we mean $\|\rmD g(z)\| \leq \sigma$ for all $z$ in the domain of $g$.} then $\Gamma g = \Gamma_{r,r\rme^{\ep'}}(F)g$ is well-defined with%
\begin{equation*}
  |\Gamma g(0)| \leq \frac{r}{2}\rme^{2\ep'},\quad \|\rmD \Gamma g\| \leq \sigma,\quad \rme^{-2\ep'} \leq \frac{|\det \rmD F_{|T(\graph(g))}|}{|\det \widehat{L}_1|} \leq \rme^{2\ep'}.%
\end{equation*}
\end{lemma}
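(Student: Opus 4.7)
The proof is a standard graph-transform argument. First I would establish the well-definedness of $\Gamma g$: for each $\bar{x} \in R^p(r\rme^{\ep'})$ one must find a unique $y \in R^p(r)$ with $\pi_1 F(y, g(y)) = \bar{x}$, where $\pi_1, \pi_2$ denote the projections onto $\R^p$ and $\R^q$ respectively. Writing $F = \widehat{L} + \Psi$ with $\Lip(\Psi) < \tau$, this is equivalent to the fixed-point problem $y = \widehat{L}_1^{-1}\bigl(\bar{x} - \pi_1 \Psi(y, g(y))\bigr)$. Since $\|\widehat{L}_1^{-1}\| \leq \rme^{-\lambda'}$ (using the expansion bound on $\widehat{L}_1$) and $g$ has Lipschitz constant at most $\sigma$, the right-hand side is a self-map of $R^p(r)$ of Lipschitz constant at most $\rme^{-\lambda'}\tau(1+\sigma) < 1$ once $\tau$ is small, and the Banach fixed-point theorem yields a unique $y(\bar{x})$. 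One then sets $\Gamma g(\bar{x}) := \pi_2 F(y(\bar{x}), g(y(\bar{x})))$.

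Next I would verify the three quantitative conclusions in turn. For $|\Gamma g(0)|$, evaluation at $\bar{x} = 0$, together with $F(0) = 0$, the bound on $\|\widehat{L}_2\|$, and the Lipschitz estimate for $\Psi$, yields $|\Gamma g(0)| \leq \rme^{\ep'}|g(y(0))| + \tau(|y(0)| + |g(y(0))|) \leq (r/2)\rme^{2\ep'}$ once $\tau$ is small. For $\|\rmD \Gamma g\|$, implicit differentiation of the defining equation gives $\rmD y(\bar{x}) = \bigl(\widehat{L}_1 + \rmD_1\pi_1\Psi + \rmD_2\pi_1\Psi \cdot \rmD g\bigr)^{-1}$, and composing with $\rmD(\pi_2 F \circ (\mathrm{id}, g))$ yields $\rmD \Gamma g = \widehat{L}_2 \rmD g \, \widehat{L}_1^{-1} + O(\tau)$ to leading order. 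Since $\|\widehat{L}_2\|\|\widehat{L}_1^{-1}\| \leq \rme^{\ep' - \lambda'} < 1$, the invariance $\|\rmD \Gamma g\| \leq \sigma$ closes up for $\tau$ small in terms of $\sigma$. For the Jacobian ratio, note that $T_{(y, g(y))}\graph(g)$ is the image of $\R^p$ under $v \mapsto (v, \rmD g(y)v)$, hence $\sigma$-close to $\R^p \oplus \{0\}$; relative to this subspace, $\rmD F$ acts as $\widehat{L}_1$ plus a $p \times p$ perturbation of norm $O(\tau + \sigma)$, and its determinant differs from $\det \widehat{L}_1$ by a factor $(1 + O(\tau + \sigma))^p \in [\rme^{-2\ep'}, \rme^{2\ep'}]$ for $\tau, \sigma$ small.

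The main obstacle is to arrange that every smallness condition on $\tau$ and $\sigma$ depend only on $p$, $q$, $\lambda'$, $\ep'$, and in particular not on $r$. Because the lemma is claimed for all $r > 0$, the argument must be carried out in a scale-invariant way; this is already forced by the Lipschitz (rather than $C^1$) formulation of the hypotheses and the normalization $F(0) = 0$, so one must be careful never to invoke bounds on $\|g\|_\infty$ or $\|F\|_{C^1}$. A second delicate point is that the bound $\|\rmD \Gamma g\| \leq \sigma$ has to reproduce \emph{the same} $\sigma$ that bounds $\|\rmD g\|$, so the slope estimate has to close up self-consistently; this is what fixes the order in which $\tau$ and $\sigma$ must be chosen, and is the bookkeeping heart of the argument.
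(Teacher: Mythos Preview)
Your proposal is correct and follows the same graph-transform strategy as the paper. The paper is considerably terser, however: for the existence of $\Gamma g$ and the bounds on $|\Gamma g(0)|$ and $\|\rmD\Gamma g\|$ it simply invokes ``standard graph transform estimates'' rather than spelling out the fixed-point argument you give. The only part the paper treats explicitly is the determinant ratio, and there it organizes the estimate differently from your one-step perturbation: with $L := \rmD F(y,g(y))$ and $h := \rmD g(y)$ it writes
\[
  \frac{|\det \rmD F_{|T\graph(g)}|}{|\det \widehat{L}_1|} \;=\; \frac{|\det L_{|\graph(h)}|}{|\det \widehat{L}_{|\graph(h)}|}\cdot\frac{|\det \widehat{L}_{|\graph(h)}|}{|\det \widehat{L}_1|},
\]
bounding the first factor in $[\rme^{-\ep'},\rme^{\ep'}]$ by taking $\tau_1$ small (using only $\|L-\widehat{L}\|\leq\tau_1$ and $\|h\|\leq 1$) and the second factor in $[\rme^{-\ep'},\rme^{\ep'}]$ by taking $\sigma$ small. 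This decomposition makes the order of choice of constants transparent: first $\tau_1$, then $\sigma$, then $\tau\leq\tau_1$ so that the graph-transform closes up for that $\sigma$. Your direct argument is equivalent, but be sure to phrase the perturbation multiplicatively (i.e., bound $\|\widehat{L}_1^{-1}\cdot(\text{error})\|$, which is $O(\rme^{-\lambda'}\tau)$) rather than additively, since no upper bound on $\|\widehat{L}_1\|$ is assumed and an absolute $O(\tau+\sigma)$ perturbation of $\widehat{L}_1$ need not give a determinant ratio near $1$.
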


\begin{proof}
In this proof, $h$ will always denote an element of $\Hom(\R^p,\R^q)$. We first choose $\tau_1>0$ such that the following holds: For every $L \in \End(\R^{p+q})$, if $\|L - \widehat{L}\| \leq \tau_1$ and $\|h\| \leq 1$, then%
\begin{equation}\label{eq_firstdil}
  \rme^{-\ep'} \leq \frac{|\det L_{|\graph(h)}|}{|\det\widehat{L}_{|\graph(h)}|} \leq \rme^{\ep'}.%
\end{equation}
We choose $\sigma>0$ such that $\|h\| \leq \sigma$ implies%
\begin{equation}\label{eq_seconddil}
  \rme^{-\ep'} \leq \frac{|\det \widehat{L}_{|\graph(h)}|}{|\det \widehat{L}_1|} \leq \rme^{\ep'}.%
\end{equation}
It follows from standard graph transform estimates that given $\sigma,\lambda',\ep'$, there exists $0 < \tau \leq \tau_1$ such that $\Lip(F - \widehat{L}) < \tau$ implies that $\Gamma g$ exists and satisfies $|\Gamma g(0)| \leq (r/2)\rme^{2\ep'}$, $\|\rmD\Gamma g\| \leq \sigma$.%

Let $y = (z,g(z)) \in \graph(g)$, $L := \rmD F(y)$ and $h := \rmD g(z)$. Then the following holds:%
\begin{enumerate}
\item[(i)] $T_y\graph(g) = \graph(h)$ (easy to see).%
\item[(ii)] $\|L - \widehat{L}\| = \|\rmD F(y) - \widehat{L}\| \leq \Lip(F - \widehat{L}) < \tau \leq \tau_1$.%
\item[(iii)] $\|h\| = \|\rmD g(z)\| \leq \sigma$ by assumption.%
\end{enumerate}
Since%
\begin{equation*}
  \frac{|\det \rmD F(y)_{|T_y(\graph(g))}|}{|\det \widehat{L}_1|} = \frac{|\det L_{|\graph(h)}|}{|\det \widehat{L}_{|\graph(h)}|} \cdot \frac{|\det \widehat{L}_{|\graph(h)}|}{|\det \widehat{L}_1|},%
\end{equation*}
the inequalities \eqref{eq_firstdil} and \eqref{eq_seconddil} yield%
\begin{equation*}
  \rme^{-2\ep'} \leq \frac{|\det \rmD F(y)_{|T_y(\graph(g))}|}{|\det \widehat{L}_1|} \leq \rme^{2\ep'},%
\end{equation*}
completing the proof.%
\end{proof}

We will apply the above lemma to $F := \widetilde{\varphi}_{u,x}$ with $(u,x) \in \KC$ and $\widehat{L} = L_{u,x}$ as in Lemma \ref{lem_33}. In the following, let%
\begin{equation*}
  B^{n,u}_{\rho}(x) := \left\{ y\in M\ :\ d(\varphi(j,x,u),\varphi(j,y,u)) \leq \rho,\ 0 \leq j \leq n \right\},%
\end{equation*}
which slightly differs from \eqref{eq_def_bowenball} without consequences on the validity of the volume lemma, however.%

The following proposition is the adaptation of \cite[Lem.~2]{You} to our situation.%

\begin{proposition}\label{prop_volest}
There exist a compact neighborhood $\KC$ of $L(Q)$ (possibly smaller than the original $\KC$) and a constant $\rho_0>0$ such that for each $0 < \rho \leq \rho_0$ there is a constant $C_{\rho} \geq 1$ such that%
\begin{equation*}
  C_{\rho}^{-1} \rme^{-4\ep'n} \rme^{\sum_{i=0}^n\xi(\Phi_i(u,x))} \leq \vol(B^{n,u}_{\rho}(x)) \leq C_{\rho} \rme^{4\ep' n}\rme^{\sum_{i=0}^n\xi(\Phi_i(u,x))}%
\end{equation*}
for all $n\in\N$ and $(u,x)\in\KC_n := \bigcap_{i=0}^n\Phi_{-i}(\KC)$.%
\end{proposition}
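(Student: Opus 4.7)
The approach is to follow Young's proof in \cite{You} for autonomous systems, with the graph transform (Lemma \ref{lem_34}) and the linearization (Lemma \ref{lem_33}) providing the adaptation to the skew-product setting. The intuition is that $B^{n,u}_{\rho}(x)$ should look like a "box" of size $\rho^q$ in the center-stable direction $E^{0-}$ times a disk of $p$-dimensional volume $\approx J^+\varphi_{n,u}(x)^{-1}$ in the unstable direction $E^+$; the partial hyperbolicity controls the errors when iterating the nonlinear map as a small perturbation of the block-diagonal linear map $L_{u,x}$.

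\textbf{Setup.} I would first shrink $\KC$ so that for any $(u,x) \in \KC$ with $\Phi_1(u,x) \in \KC$, Lemma \ref{lem_33} provides a linearization $L_{u,x} = L^1_{u,x} \oplus L^2_{u,x}$ with $\tau$ chosen small enough that Lemma \ref{lem_34} applies (for the fixed $\lambda',\ep'$ and corresponding $\sigma$). Then, for $(u,x) \in \KC_n$, the maps $\widetilde{\varphi}_{\Phi_i(u,x)}$ are $C^{\Lip}$-close to $L_{\Phi_i(u,x)}$ on $E_{r_0}(\Phi_i(u,x))$ simultaneously for all $0 \leq i \leq n$. Choose $\rho_0 \leq r_0/2$ compatible with uniform bi-Lipschitz bounds on all the exponential charts $\exp_{\varphi_{i,u}(x)}$, so that in these charts the Bowen ball is pinched between two box-like sets.

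\textbf{Upper bound.} Work in the exponential chart at $x$ and consider the "tube" $T^{n,u}_\rho(x) := \bigcap_{i=0}^n \widetilde{\varphi}_i^{-1}(E_\rho(\Phi_i(u,x)))$ where $\widetilde{\varphi}_i$ denotes the $i$-fold composition in the successive exponential charts. Since $\exp$ is bi-Lipschitz uniformly, $B^{n,u}_\rho(x)$ is contained in $T^{n,u}_{c\rho}(x)$ for some $c>1$. By Fubini in the splitting $E^+(u,x) \oplus E^{0-}(u,x)$, write
\begin{equation*}
  \vol(T^{n,u}_{c\rho}(x)) = \int_{E^{0-}_{c\rho}(u,x)} \vol_p(T^{n,u}_{c\rho}(x) \cap (E^+(u,x) \tm \{w\})) \, \rmd w.
\end{equation*}
Iterating Lemma \ref{lem_34} forward starting from the horizontal disk $E^+_{c\rho}(u,x) \tm \{w\}$ (which is the graph of the zero function over $E^+$, hence of slope $0 \leq \sigma$), the image under each $\widetilde{\varphi}_i$ remains a graph of slope $\leq \sigma$ with unstable Jacobian equal to $|\det L^1_{\Phi_i(u,x)}|$ up to a factor of $e^{\pm 2\ep'}$. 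Comparing $L^1_{\Phi_i(u,x)}$ with $\rmD\varphi_{1,\theta_i u}(\varphi_{i,u}(x))|_{E^+}$ using Lemma \ref{lem_33} introduces at most another $e^{\ep'}$ factor per iterate. The condition that the image stays in $E_\rho$ at time $n$ then forces the domain slice to have $p$-volume at most $\mathrm{const}\cdot e^{4\ep' n}\cdot\prod_{i=0}^{n-1}|\det L^1_{\Phi_i(u,x)}|^{-1} = \mathrm{const}\cdot e^{4\ep' n} e^{\sum_{i=0}^n \xi(\Phi_i(u,x))}$. The outer integration over $E^{0-}_{c\rho}$ contributes only a factor $\leq c_q \rho^q$, which is absorbed into $C_\rho$.

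\textbf{Lower bound.} To produce a set contained in $B^{n,u}_\rho(x)$ of the desired volume, pick the largest disk $D \subset E^+_{\rho/c'}(u,x)$ such that $\widetilde{\varphi}_i(D \tm \{0\})$ stays within $E_{\rho/c'}(\Phi_i(u,x))$ for all $0 \leq i \leq n$. By the expansion $m(L^1) \geq e^{\lambda'}$ and Lemma \ref{lem_34}, $\vol_p(D) \geq \mathrm{const}\cdot e^{-4\ep' n} e^{\sum \xi(\Phi_i(u,x))}$. For each $y \in D$, Lemma \ref{lem_34} applied iteratively (using that $\|L^2\| \leq e^{\ep'}$ is a mild expansion of $E^{0-}$) produces a $q$-dimensional disk of size comparable to $\rho$ in the $E^{0-}$-direction above $y$ whose forward orbit of length $n$ stays in $E_\rho$; the union of these disks forms a set contained in $T^{n,u}_\rho(x)$, hence in $B^{n,u}_{c\rho}(x)$ up to the bi-Lipschitz factor.

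\textbf{Main obstacle.} The technical heart is the bookkeeping: iterating Lemma \ref{lem_34} produces only an $e^{\pm 2\ep'}$ error per step in comparing the true graph-transform Jacobian with $|\det L^1|$, but a further error in comparing $|\det L^1_{\Phi_i(u,x)}|$ with the true unstable Jacobian $J^+\varphi_{1,\Phi_i(u,x)} = e^{-\xi(\Phi_i(u,x))}$ must be controlled uniformly on $\KC$ and shown to sum to at most $e^{\pm 4\ep' n}$. This requires fine-tuning the parameters $\delta, \tau, \sigma, r_0$ from Lemmas \ref{lem_33}--\ref{lem_34}. The second subtlety is the $E^{0-}$-direction in the lower bound, where one must confirm that the $\|L^2\| \leq e^{\ep'}$ bound (without any lower bound on $m(L^2)$) still suffices to carry along a full $q$-dimensional disk of size $\rho$; this works because the center-stable graph transform is applied to a graph that is already wide, so mild expansion is the only issue and it only affects the $e^{\ep' n}$ error budget.
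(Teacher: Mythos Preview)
Your proposal is correct and follows essentially the same approach as the paper: both adapt Young's volume lemma by working in exponential charts, foliating the Bowen ball by horizontal slices $\graph(g^w)$ over $E^+$ for $w\in E^{0-}$, and iterating the graph transform (Lemma \ref{lem_34}) so that each step contributes a Jacobian factor $|\det L^1_{\Phi_i(u,x)}|^{-1}$ up to $e^{\pm 2\ep'}$, which is then compared to $e^{\xi(\Phi_i(u,x))}$ for another $e^{\pm 2\ep'}$. The only minor differences are that the paper obtains this second comparison by applying Lemma \ref{lem_34} once more with $g\equiv 0$ (rather than invoking Lemma \ref{lem_33} directly as you do), and the paper simply defers the lower bound to \cite[Lem.~2]{You} whereas you sketch an explicit product construction.
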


\begin{proof}
Let $\tau,\sigma>0$ be given by Lemma \ref{lem_34} and $\delta,r_0>0$ (depending on $\tau$) by Lemma \ref{lem_33}. Since $\sigma$ can be chosen arbitrarily small in Lemma \ref{lem_34}, we may assume that $16\sigma < 1$. Let%
\begin{align*}
  \KC := \bigl\{ (u,x) \in \UC \tm M\ :\ &\exists (v,y) \in L(Q) \mbox{ s.t.}\\
	               & \max\{d((u,x),(v,y)),d(\Phi_1(u,x),\Phi_1(v,y))\} \leq \delta \bigr\},%
\end{align*}
which we can assume to be contained in the original $\KC$ (where $E^+$ and $E^{0-}$ are defined).%

For all sufficiently small $\rho$ and $(u,x) \in \KC_n$ we have the inclusion%
\begin{align*}
  B^{n,u}_{\rho}(x) &\subset \exp_x\left\{ \xi\in R^p(r_0) \tm R^q\Bigl(\frac{r_0}{4}\Bigr)\ :\ \forall j \in [0:n],\ \widetilde{\varphi}_{u,x}^j(\xi) \in R^p(r_0) \tm R^q\Bigl(\frac{r_0}{4}\Bigr) \right\}\\
										&=: \exp_x\left( N^{n,u}_{r_0}(x) \right),%
\end{align*}
where $\widetilde{\varphi}_{u,x}^j = \widetilde{\varphi}_{\Phi_{j-1}(u,x)} \circ \cdots \circ \widetilde{\varphi}_{u,x}$ (cf.~\cite[Lem.~4.2]{DK1} for a detailed argument).

For $w\in R^q(r_0)$, denote by $g^w:R^p(r_0) \rightarrow R^q(r_0)$ the constant function equal to $w$. Let%
\begin{equation*}
  A := \left\{ w\in R^q(r_0/4)\ :\ \forall j \in [0:n],\ \graph(\Gamma^jg^w) \subset R^p(r_0) \tm R^q(r_0/2) \right\},%
\end{equation*}
where $\Gamma^j = \Gamma_j \circ \cdots \circ \Gamma_1$ and $\Gamma_j = \Gamma_{r_0,r_0\rme^{\ep'}}(\widetilde{\varphi}_{\Phi_j(u,x)})$. We claim that%
\begin{equation*}
  N^{n,u}_{r_0}(x) \subset \bigcup_{w\in A} (\widetilde{\varphi}_{u,x}^n)^{-1}\graph(\Gamma^ng^w).%
\end{equation*}
Let $\xi = (\xi_p,\xi_q) \in N^{n,u}_{r_0}(x)$ and $w := \xi_q \in R^q(r_0/4)$. We have $|g^w(0)| \leq r_0/2$ and $\|\rmD g^w\| = 0 < \sigma$, so by Lemma \ref{lem_34}, $\Gamma g^w$ is well-defined with $\|\rmD\Gamma g^w\| \leq \sigma$.%

As $\xi \in N^{n,u}_{r_0}(x)$, $\widetilde{\varphi}_{u,x}(\xi) \in R^p(r_0) \tm R^q(r_0/4)$ implying $|\Gamma g^w(0)| \leq r_0/4$ (from \eqref{eq_graphtransform}). As $\|\rmD \Gamma g^w\| \leq \sigma < 1/8$ (by the choice of $\sigma$), on $R^p(r_0)$ we have%
\begin{equation*}
  |\Gamma g^w| \leq |\Gamma g^w(0)| + \|\rmD \Gamma g^w\| \cdot r_0 \leq \frac{r_0}{4} + \frac{r_0}{8} < \frac{r_0}{2}.%
\end{equation*}
Hence, the image of $\Gamma g^w$, restricted to $R^p(r_0)$, is contained in $R^q(r_0)$. We conclude that $\Gamma g^w$ satisfies all the assumptions of Lemma \ref{lem_34}, implying that $\Gamma^2 g^w$ is well-defined with $\|\rmD \Gamma^2 g^w\| \leq \sigma$.%

Iterating this process, we obtain that for every $i \in [1:n]$, $\Gamma^ig^w$ is well-defined and $|\Gamma^ig^w| < r_0/2$, i.e., $\graph(\Gamma^ig^w) \subset R^p(r_0) \tm R^q(r_0/2)$. So $w\in A$ and $\xi \in (\widetilde{\varphi}_{u,x}^n)^{-1}\graph(\Gamma^ng^w)$. (Observe that every time we apply the graph transform, $r_0$ gets multiplied with $\rme^{\ep'}$. However, every time we can restrict the domain of the resulting function again to $R^p(r_0)$.)%

Let $m_p$ denote the $p$-dimensional Riemannian measure on any $p$-dimensional submanifold of a tangent space of $M$. Then%
\begin{align*}
  m_p\left(\left(\widetilde{\varphi}_{u,x}^n\right)^{-1}\graph(\Gamma^ng^w)\right) &\leq \max_{w\in A,\ z \in \graph g^w}\left|\det(\rmD\widetilde{\varphi}_{u,x}^n)_{|T_z\graph(g^w)}\right|^{-1} \cdot m_p(\graph(\Gamma^ng^w))\\
	&\leq \rme^{2\ep'n}\prod_{i=0}^{n-1}\bigl|\det L_{\Phi_i(u,x)}^1\bigr|^{-1} \cdot m_p(\graph(\Gamma^ng^w))\allowdisplaybreaks\\
	&\leq \rme^{2\ep'n}\prod_{i=0}^{n-1}\bigl|\det L_{\Phi_i(u,x)}^1\bigr|^{-1} \cdot Cr_0^p\allowdisplaybreaks\\
	&\leq Cr_0^p \rme^{4\ep'n}\prod_{i=0}^{n-1}\Bigl|\det(\rmD\widetilde{\varphi}_{\Phi_i(u,x)})(0)_{|\R^p}\Bigr|^{-1}\allowdisplaybreaks\\
	   &= Cr_0^p \rme^{4\ep'n}\prod_{i=0}^{n-1}\left|\det\rmD\varphi_{1,\theta_iu}(\varphi(i,x,u))_{|E^+(\Phi_i(u,x))}\right|^{-1}\allowdisplaybreaks\\
		 &= Cr_0^p \rme^{4\ep'n}\prod_{i=0}^{n-1}\rme^{\xi(\Phi_i(u,x))} = Cr_0^p \rme^{4\ep'n}\rme^{\sum_{i=0}^{n-1}\xi(\Phi_i(u,x))}.%
\end{align*}
Here the second and the fourth line follow from Lemma \ref{lem_34} (in the second case, the lemma is applied with $g = 0$). Observe that the sets $\left(\widetilde{\varphi}_{u,x}^n\right)^{-1}\graph(\Gamma^ng^w)$, $w\in A$, are pairwisely disjoint. Integrating over $A$ yields%
\begin{equation*}
  m_{p+q}\left(N^{n,u}_{r_0}(x)\right) \leq C'\rme^{4\ep'n}\prod_{i=0}^{n-1}\rme^{\xi(\Phi_i(u,x))}%
\end{equation*}
for some constant $C'>0$ depending on $r_0$ (here we use that the angle between the subspace $E^+$ and $E^{0-}$ is bounded on $\KC$). Since the application of $\exp_x$ results in a volume distortion, which is bounded over $(u,x) \in \KC$, and $B^{n,u}_{\rho}(x) \subset \exp_x(N^{n,u}_{r_0}(x))$, the upper estimate follows. Analogous arguments yield the lower estimate (cf.~\cite[Lem.~2]{You}).%
\end{proof}

Now we can conclude the proof of the volume lemma.%

\begin{proof} (of Lemma \ref{lem_volume})
We put $\zeta := 4\ep' = 8\ep$. Let $\eta>0$ be chosen such that $\KC$ in Proposition \ref{prop_volest} contains the closed $\eta$-neighborhood of $L(Q)$. Let $n\in\N$ and $x\in Q_{\rmd}(n+1,u,\eta)$ for some $u\in\UC$, i.e., $\dist(\varphi(i,x,u),Q(\theta_iu)) \leq \eta$ for $0 \leq i \leq n$. For each $0\leq i \leq n$ choose $y_i \in M$ with $(\theta_iu,y_i) \in L(Q)$ and $d(\varphi(i,x,u),y_i)\leq \eta$. This yields $\Phi_i(u,x) \in \KC$ for $0 \leq i \leq n$, so Proposition \ref{prop_volest} implies%
\begin{equation*}
  C_{\rho}^{-1}\rme^{-4\ep'n}\rme^{\sum_{i=0}^{n-1}\xi(\Phi_i(u,x))} \leq \vol(B^{n,u}_{\rho}(x)) \leq C_{\rho}\rme^{4\ep'n}\rme^{\sum_{i=0}^{n-1}\xi(\Phi_i(u,x))}%
\end{equation*}
for $0 < \rho \leq \rho_0$. Define the desired extension of $J^+\varphi_{n,u}(x)$ to $\KC$ by%
\begin{equation*}
  J^+\varphi_{n,u}(x) := \prod_{i=0}^{n-1} \left|\det \rmD\varphi_{1,\theta_iu}(\varphi(i,x,u))_{|E^+(\Phi_i(u,x))}\right| = \rme^{-\sum_{i=0}^{n-1}\xi(\Phi_i(u,x))}.%
\end{equation*}
This extension satisfies multiplicativity by definition and thus the volume estimates are proved.\footnote{The number $\ep$ in the formulation of Lemma \ref{lem_volume}, of course, here is $\eta$, and $\delta_0$ here is $\rho_0$.} As the proof shows, $\ep$ can be chosen arbitrarily small. By Lemma \ref{lem_34} then also $\tau$ and $\sigma$ must become small, and by Lemma \ref{lem_33} also $\delta$ and $\rho_0$.%
\end{proof}

\section{Acknowledgements}%
  
We express our gratitude to Eduardo Garibaldi and Ian Morris for helping us to gain a better understanding of subadditive cocycles. We also thank Maxence Novel for helping us to gain a better understanding of the volume lemma. The first author was supported by FAPESP Grant 2016/11135-2 and a Guest Scientist Grant from the University of Passau, where part of this work was done.%

\end{document}